\newtheorem{thm}{Theorem}
\theoremstyle{plain}
\newtheorem{conjecture}{Conjecture}
\newtheorem{cor}[thm]{Corollary}
\newtheorem{lem}[thm]{Lemma}
\newtheorem{prop}[thm]{Proposition}
\theoremstyle{remark}
\newtheorem*{acknowledgements}{Acknowledgements}
\renewcommand{\phi}{\varphi}
\newcommand{\BC}{{\mathbb{C}}}
\newcommand{\BH}{{\mathbb{H}}}
\newcommand{\BN}{{\mathbb{N}}}
\newcommand{\BR}{{\mathbb{R}}}
\newcommand{\BZ}{{\mathbb{Z}}}
\newcommand{\Fa}{{\mathfrak{a}}}
\newcommand{\Fc}{{\mathfrak{c}}}
\newcommand{\CE}{{\mathcal E}}
\newcommand{\CH}{{\mathcal H}}
\newcommand{\CI}{{\mathcal I}}
\newcommand{\CM}{{\mathcal M}}
\newcommand{\CS}{{\mathcal S}}
\newcommand{\supp}{\mathop{\rm Supp}\nolimits}
\newcommand{\nin}{\notin}
\renewcommand{\mod}{\mathop{\rm mod}\nolimits}
\newcommand{\sign}{\mathop{\rm sign}\nolimits}
\newcommand{\SL}[1]{\mathop{\rm SL}_{#1} \nolimits}
\newcommand{\artanh}{\mathop{\rm artanh}\nolimits}
\renewcommand{\Re}{\mathop{\rm Re}\nolimits}
\renewcommand{\Im}{\mathop{\rm Im}\nolimits}
\newcommand{\quotient}[2]{
        \mathchoice
            {
                \text{\raise1ex\hbox{$#1$}\Big/\lower1ex\hbox{$#2$}}%
            }
            {
                #1\,/\,#2
            }
            {
                #1\,/\,#2
            }
            {
                #1\,/\,#2
            }
    }
\newcommand{\lquotient}[2]{
        \mathchoice
            {
                \text{\lower1ex\hbox{$#1$}\Big \backslash \raise01ex\hbox{$#2$}}%
            }
            {
                #1\,\backslash\,#2
            }
            {
                #1\,\backslash\,#2
            }
            {
                #1\,\backslash\,#2
            }
    }
\newcommand{\rquotient}[2]{
        \mathchoice
            {
                \text{\raise01ex\hbox{$#1$}\Big/\lower1ex\hbox{$#2$}}%
            }
						{
                #1\,/\,#2
            }
            {
                #1\,/\,#2
            }
            {
                #1\,/\,#2
            }
    }
\newcommand{\lrquotient}[3]{
        \mathchoice
            {
                \text{\lower1ex\hbox{$#1$}\Big \backslash \raise01ex\hbox{$#2$}\Big/\lower1ex\hbox{$#3$}}%
            }
            {
                #1\,\backslash\,#2\,/\,#3
            }
            {
                #1\,\backslash\,#2\,/\,#3
            }
            {
                #1\,\backslash\,#2\,/\,#3
            }
    }
\newcommand{\sm}{\left(\begin{smallmatrix}}
\newcommand{\esm}{\end{smallmatrix}\right)}
\newcommand{\bpm}{\begin{pmatrix}}
\newcommand{\ebpm}{\end{pmatrix}}
\newcommand{\one}{{\rm 1\mskip-4mu l}}
\numberwithin{equation}{section}
\begin{document}
\selectlanguage{english}

\bibliographystyle{plain}

\title[On a Twisted Version of Linnik and Selberg's Conjecture]{On a Twisted Version of Linnik and Selberg's Conjecture on Sums of Kloosterman Sums}
\author{Raphael S. Steiner}
\address{Department of Mathematics, University of Bristol, Bristol BS8 1TW, UK}%
\email{raphael.steiner@bristol.ac.uk}%


\subjclass[2010]{11L05 (11L07, 11F72)}
\keywords{Sums of Kloosterman sums, Linnik--Selberg conjecture, Kuznetsov trace formula}


\begin{abstract} We generalise the work of Sarnak--Tsimerman to twisted sums of Kloosterman sums and thus give evidence towards the twisted Linnik--Selberg Conjecture. 
\end{abstract}
\maketitle


\section{Introduction}

The study of Kloosterman sums
$$
S(m,n;c)=\sum_{\substack{a \mod(c) \\ (a,c)=1}} e \left( \frac{ma+n\overline{a}}{c} \right), \text{ where } e(z)=e^{2\pi i z} \text{ and } a\overline{a} \equiv 1 \mod(c),
$$
is interesting for a variety of reasons. One of these reasons is their connection to the spectral theory of automorphic forms. In particular the sign changes of $S(m,n;c)$, for $c$ varying in the arithmetic progression $c \equiv 0 \mod(s)$, are related to the Selberg conjecture about the smallest positive eigenvalue of the Laplacian on the space $\lquotient{\Gamma_0(s)}{\BH}$. Concretely we have that the smallest positive eigenvalue $\lambda_1^s \ge \frac{1}{4}$ if and only if the following conjecture holds (see \cite[Theorem 16.9]{ANT}).

\begin{conjecture}[Smooth Linnik in AP] Let $m,s \in \BN$, $g \in C^3(\BR^+,\BR^+_0)$ a compactly supported bump function with $|g^{(a)}|\le 1$ for $a=0,1,2,3$, and $C \ge 1$. Then we have for every $\epsilon>0$
$$
\sum_{\substack{c \equiv 0 \mod(s)}} \frac{1}{c}S(m,m;c) g\left( \frac{C}{c} \right) \ll_{\epsilon, m, s} C^{\epsilon}.
$$
\end{conjecture}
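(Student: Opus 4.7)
The plan is to apply Kuznetsov's trace formula for $\Gamma_0(s)$ to convert the Kloosterman sum into a spectral sum, and then use the hypothesis $\lambda_1^s \ge 1/4$ to rule out an obstructive exceptional contribution.

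First I would rewrite $g(C/c)/c$ as $\phi(4\pi m/c)$ for a smooth, compactly supported test function $\phi$ whose support and derivatives are controlled in terms of $C$, $m$ and $g$. Applying the Kuznetsov formula attached to the cusp at infinity for $\Gamma_0(s)$ identifies the left-hand side with a weighted spectral sum over an orthonormal basis of Maass cusp forms of level $s$, plus a continuous Eisenstein contribution, in which the $j$-th term carries the Bessel-type integral transform $\widetilde{\phi}(t_j)$ together with the weight $|\rho_j(m)|^2/\cosh(\pi t_j)$.

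For real spectral parameters $t_j$ (and for the continuous spectrum), the three derivatives of $g$ provided by the hypothesis translate into decay of $\widetilde{\phi}(t)$ once $|t|$ exceeds a fixed power of $C$. A dyadic decomposition combined with the standard mean-value estimate $\sum_{|t_j|\le T} |\rho_j(m)|^2/\cosh(\pi t_j) \ll_{s,\epsilon}(T^2+m)^{1+\epsilon}$, together with the analogous bound for the Eisenstein contribution (where convexity for the relevant Hecke $L$-values is sufficient), then yields the required $O_{m,s,\epsilon}(C^\epsilon)$ for the non-exceptional part.

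The main obstacle, and the reason the statement remains a conjecture in general, is the exceptional spectrum. For $\lambda_j = 1/4 - r_j^2 \in (0,1/4)$ one has $t_j = i r_j$ with $r_j \in (0,1/2)$, and the small-argument asymptotics of the Bessel kernel force $\widetilde{\phi}(i r_j)$ to grow like a positive multiple of $C^{2 r_j}$, producing a genuine power of $C$ that no bound on $|\rho_j(m)|^2$ can absorb into $C^\epsilon$. Under the hypothesis $\lambda_1^s \ge 1/4$ this part of the spectrum is empty, completing the proof; conversely, the existence of any exceptional eigenvalue yields a matching lower bound on the left-hand side, which is precisely the content of the equivalence recorded in \cite[Theorem 16.9]{ANT}.
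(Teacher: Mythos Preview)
The statement you are attempting to prove is a \emph{conjecture} in the paper, not a theorem: the paper gives no proof, and merely records (citing \cite[Theorem 16.9]{ANT}) that it is equivalent to Selberg's eigenvalue conjecture $\lambda_1^s \ge \tfrac14$ for $\Gamma_0(s)$. There is therefore no ``paper's own proof'' to compare your argument against.

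Your sketch is not a proof of the conjecture as stated, and you correctly acknowledge this: what you have written is a sketch of the equivalence with Selberg's conjecture, which is precisely the content of the cited reference. In that sense your write-up is accurate and matches the paper's presentation. One small omission: the Kuznetsov formula for $\Gamma_0(s)$ also has a holomorphic contribution (cf.\ Proposition~\ref{prop:kuznetsov}), which you do not mention; it is harmless here since the holomorphic Bessel transforms decay rapidly and contribute $O_{m,s,\epsilon}(C^{\epsilon})$ without difficulty, but a complete argument should address it.
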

In this paper however, we are interested in the sharp cut-off variant of the above conjecture. 
The first non-trivial progress towards this conjecture was made by Kuznetsov \cite{Kuz1}, who managed to prove
\begin{equation}
\sum_{c \le C} \frac{1}{c}S(m,n;c) \ll_{m,n} C^{\frac{1}{6}}\log(2C)^{\frac{1}{3}},
\label{eq:Kuzbound}
\end{equation}
by exploiting the Kuznetsov trace formula (see Proposition \ref{prop:kuznetsov}), which was established in the same paper. The bound \eqref{eq:Kuzbound} is still the best known bound to date and the Kuznetsov trace formula has become a very powerful tool in a variety of contexts.

In their paper \cite{SarTsim} Sarnak--Tsimerman have made the dependence on $m,n$ in \eqref{eq:Kuzbound} explicit and moreover achieved a non-trivial bound in the harder `Selberg' range ($C \le \sqrt{|mn|}$). Their result has further been generalised to the arithmetic progressions $c \equiv 0 \mod (s)$ by Ganguly--Sengupta \cite{APKloosterman}, and to $c \equiv a \mod(r)$ with $(a,r)=1$ by Blomer--Mili\'cevi\'c \cite{BlomerKloos}. Recently Kiral--Young \cite{KiralYoungfifth} have indicated a simple approach which allows one to incorporate both congruence conditions $c \equiv 0 \mod(s)$ and $c \equiv a \mod(r)$ simultaneously (assuming $(r,as)=1$).

Motivated by an application to the efficiency of a certain universal set of quantum gates, Browning--Kumaraswamy--Steiner \cite{S3covexp} have proposed the following twisted version of the Linnik--Selberg conjecture.

\begin{conjecture}[Twisted Linnik--Selberg] 
Let $B, C\geq 1$
and let  $m,n\in \BZ$ be non-zero. 
Let $s\in \BN$ and let $a\in \BZ/s\BZ$. 
Then for any $\alpha\in [-B,B]$ we have
$$
\sum_{\substack{c \equiv a \mod{(s)}\\  c \leq C}}  \frac{1}{c}S(m ,n;c)e\left(\frac{2\sqrt{mn}}{c} \alpha\right) 
 \ll_{\epsilon,s,B} (|mn| C)^{\epsilon},
 $$
for any $\epsilon>0$.
\label{conj:2}
\end{conjecture}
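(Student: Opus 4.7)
The plan is to follow the strategy of Sarnak--Tsimerman, as extended to arithmetic progressions $c\equiv a\bmod s$ by Ganguly--Sengupta, Blomer--Mili\'cevi\'c, and Kiral--Young, while absorbing the additive twist $e(2\sqrt{mn}\alpha/c)$ into the weight on the geometric side of an appropriate Kuznetsov formula. First, I would detect the condition $c\equiv a\bmod s$ by additive characters in the Kiral--Young fashion, thereby reducing the problem to smoothly weighted sums
\[
\sum_c\frac{S(m,n;c)}{c}\,F(c),\qquad F(c)=g(c/C)\,e\!\left(\tfrac{2\sqrt{mn}\,\alpha}{c}\right),
\]
where $g$ is a smoothed indicator of $[0,1]$ at scale $X^{-1}$ for a parameter $X$ to be optimised later. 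The contribution from the transition region $C(1-X^{-1})\le c\le C$ is controlled by the Weil bound, giving an admissible error provided $X$ is at most a small power of $|mn|C$.

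Next, I would apply the Kuznetsov trace formula of the appropriate level to the smoothed sum. After the change of variables $x=4\pi\sqrt{mn}/c$, the geometric-side test function becomes
\[
\phi(x)=g\!\left(\frac{4\pi\sqrt{mn}}{xC}\right)e^{i\alpha x},
\]
so that the twist appears as a pure oscillation in the variable dual to the spectral parameter. The analytic heart of the argument is to estimate the Bessel transform
\[
\check\phi(t)=\frac{\pi i}{\sinh(\pi t)}\int_0^\infty\bigl(J_{2it}(x)-J_{-2it}(x)\bigr)\phi(x)\,\frac{dx}{x},
\]
together with its $K$-Bessel counterpart when $mn<0$ and its holomorphic analogue, uniformly in $t$, $\alpha\in[-B,B]$, $m$, $n$, $s$ and $C$. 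Using the asymptotic $J_{2it}(x)\sim\sqrt{2/(\pi x)}\cos(x-\pi/4-it\log(x/2))$ for $x\gtrsim 1+|t|$, the combined phase is $(1\pm\alpha)x\mp t\log(x/2)$; repeated integration by parts then shows that $\check\phi(t)$ decays rapidly off the spectral window on which this phase is stationary inside the support of $\phi$, the location and width of which are governed by $\alpha$ and $\sqrt{|mn|}/C$.

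On the spectral side, the sum weighted by $\check\phi(t)$ is bounded by combining the Kim--Sarnak bound for exceptional eigenvalues, Weyl's law, and the Deshouillers--Iwaniec spectral large sieve; the Eisenstein contribution is handled by the same mechanism. In the Selberg range $C\le\sqrt{|mn|}$, an Eisenstein-type main term is subtracted as in Sarnak--Tsimerman and shown to satisfy the desired polynomial bound on its own, after which $X$ is optimised against the transition error. The main obstacle I anticipate is the uniform control of $\check\phi(t)$ when $|\alpha|$ is close to $1$: there the oscillation of the Bessel function and the additive twist nearly resonate, the stationary point of $e^{i\alpha x}J_{\pm 2it}(x)$ migrates into the bulk of the support of $\phi$, and the spectral window can be shifted onto small and potentially exceptional $t$. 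Overcoming this resonance will require a careful stationary phase analysis adapted to the threshold $|\alpha|=1$, combined with an averaged spectral large sieve (or a second moment over dyadic windows in $t$) to keep the final estimate of polynomial size $(|mn|C)^\epsilon$ uniformly in $\alpha$.
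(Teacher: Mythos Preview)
The statement you are attempting to prove is a \emph{conjecture}, and the paper does not prove it; it merely establishes partial progress (Theorems~\ref{thm:1} and~\ref{thm:2}). Your outline is in fact essentially the method the paper uses to obtain those theorems: smooth the sharp cut-off, apply Kuznetsov, and analyse the Bessel transforms of $\phi(x)=g(\cdot)e^{i\alpha x}$ via stationary phase, with the exceptional spectrum isolated as a main term. But this machinery does \emph{not} produce the conjectured bound $(|mn|C)^{\epsilon}$; it yields
\[
\frac{C^{1/6}}{s^{1/3}}+(1+|\alpha|^{1/3})\frac{(mn)^{1/6}}{s^{2/3}}+\cdots
\]
and similar power-type terms. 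The paper is explicit about this: ``the derived upper bounds \dots\ are not strong enough to offer any unconditional improvement'' in the deep Selberg range, where ``the trivial bound is still the best known bound''.

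Concretely, the gap in your proposal is the final clause, where you expect stationary phase plus the spectral large sieve ``to keep the final estimate of polynomial size $(|mn|C)^{\epsilon}$''. This is precisely what is not known. The holomorphic contribution alone already gives $1+X$ with $X=4\pi\sqrt{mn}/C$ (see \eqref{eq:holofull}), which is a genuine power of $mn$ in the Selberg range; the regular Maass spectrum contributes terms of comparable size, and the exceptional spectrum produces an actual main term that must be subtracted rather than bounded (see \eqref{eq:CI4a}). Your anticipated ``main obstacle'' at $|\alpha|$ near $1$ is real but secondary: even for $\alpha=0$ (the classical Linnik--Selberg setting) the method gives only $C^{1/6}$, not $C^{\epsilon}$. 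Proving Conjecture~\ref{conj:2} would in particular prove the untwisted Linnik conjecture in arithmetic progressions, which remains open.
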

In this paper we are concerned with establishing some progress towards this conjecture. Before we state our results we shall introduce some simplifying notation: $F \lesssim G$ means $|F| \le K_{\epsilon} (Cmns(1+|\alpha|))^{\epsilon} G $ for some positive constant $K_{\epsilon}$, depending on $\epsilon$, and every $\epsilon>0$.
\begin{thm} Let $C \ge 1$, $\alpha \in \BR$, $s\in \BN$ and $m,n\in \BZ$ with $mn>0$, $s\ll \min \{(mn)^{\frac{1}{4}},C^{\frac{1}{2}}\}$, and $(m,n,s)=1$. Then we have
\begin{multline*}
\sum_{\substack{c \le C \\ c \equiv 0 \mod(s)}} \frac{1}{c}S(m,n;c)e\left(\frac{2\sqrt{mn}}{c} \alpha\right) + 2 \pi \sum_{t_h \in i[0,\theta]} \frac{\sqrt{mn} \cdot \overline{\rho_h(m)}\rho_h(n)}{\cos(\pi |t_h|)} \int_{\frac{\sqrt{mn}}{C}}^{\infty} Y_{2|t_h|}(x)e^{i\alpha x}\frac{dx}{x} \\
 \lesssim \frac{C^{\frac{1}{6}}}{s^{\frac{1}{3}}}+(1+|\alpha|^{\frac{1}{3}})\frac{(mn)^{\frac{1}{6}}}{s^{\frac{2}{3}}}+\frac{m^{\frac{1}{4}}(m,s)^{\frac{1}{4}}+n^{\frac{1}{4}}(n,s)^{\frac{1}{4}}}{s^{\frac{1}{2}}} \\
+\min \left \{\frac{(mn)^{\frac{1}{8}+\frac{\theta}{2}}(mn,s)^{\frac{1}{8}}}{s^{\frac{1}{2}}} , \frac{(mn)^{\frac{1}{4}}(mn,s)^{\frac{1}{4}}}{s} \right \} ,
\end{multline*}
where $Y_t$ is the Bessel function of the second kind of order $t$, $\theta$ is the best known progress towards the Ramanujan--Selberg conjecture, and the summation $t_h$ is over all exceptional eigenfunctions $h$ with eigenvalue $\frac{1}{4}+t_h^2$ of the Laplacian for the manifold $\lquotient{\Gamma_0(s)}{\BH}$, where $\rho_h(n)$ denotes its $n$-th $L^2$-normalised Fourier coefficient.
\label{thm:1}
\end{thm}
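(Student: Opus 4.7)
The plan is to apply the Kuznetsov trace formula of level $s$ (Proposition \ref{prop:kuznetsov}) with a test function that simultaneously encodes both the sharp cut-off $c\le C$ and the twist $e(2\sqrt{mn}\alpha/c)$. Setting $x=4\pi\sqrt{mn}/c$, the twist becomes a pure phase $e(\alpha x/(2\pi))$, so I would take a test function of the form $\phi(x)=g(Cx/(4\pi\sqrt{mn}))e(\alpha x/(2\pi))$, where $g$ is a smooth approximation to the indicator of $[0,1]$ built via a dyadic partition of unity together with a tunable boundary-smoothing parameter $\Delta$ to be optimised at the end.

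Applying the trace formula on each dyadic piece converts the Kloosterman side into three spectral sums: over the Maass cusp forms (discrete spectrum), the Eisenstein series (continuous spectrum), and the holomorphic cusp forms, each weighted by the Bessel transform $\phi^{\#}(t)$. The exceptional eigenvalues $t_h\in i[0,\theta]$ contribute a genuine main term, which I would extract explicitly and match to the $Y_{2|t_h|}$-integral appearing in the theorem, since for small imaginary $t$ the Bessel transform essentially reduces to $\int \phi(x) Y_{2it}(x)\,dx/x$ up to negligible error. This pairing, together with a limit interchange that removes the smoothing, produces precisely the $Y_{2|t_h|}$-term on the left-hand side, so that the remaining spectral mass constitutes the error we must estimate.

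The heart of the argument is bounding $\phi^{\#}(t)$ for a test function carrying the additional oscillation $e(\alpha x/(2\pi))$. The stationary phase analysis of the Bessel kernel now interacts with the twist: away from a transitional window $|t|\asymp |\alpha|$, integration by parts yields the standard decay localised in a shorter $t$-aperture, while in the critical range one must work with the uniform asymptotics of $J_{2it}$ along the lines of Sarnak--Tsimerman. Feeding these into the spectral large sieve of Deshouillers--Iwaniec at level $s$, together with the Weil bound for the Fourier coefficients of Eisenstein series at ramified cusps, yields the first two terms $C^{1/6}/s^{1/3}$ and $(1+|\alpha|^{1/3})(mn)^{1/6}/s^{2/3}$; the $s$-savings come from the normalisation of the Petersson inner product at level $s$ and the explicit $(m,s)$, $(n,s)$ factors appearing in the Fourier expansions at non-trivial cusps. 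The boundary-smoothing error, balanced against $\Delta$, produces the $m^{1/4}(m,s)^{1/4}/s^{1/2}+n^{1/4}(n,s)^{1/4}/s^{1/2}$ contribution.

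The main obstacle, which I expect to require the most care, is the treatment of the exceptional spectrum. Each $t_h\in i[0,\theta]$ inflates the naive bound by a factor of order $(mn/C)^{2|t_h|}$, so a direct application of Kim--Sarnak yields only the coarser alternative $(mn)^{1/4}(mn,s)^{1/4}/s$ in the minimum. To obtain the sharper bound $(mn)^{1/8+\theta/2}(mn,s)^{1/8}/s^{1/2}$, I would Cauchy--Schwarz the sum over exceptional eigenvalues into one copy of the pointwise bound on $|\rho_h(m)\rho_h(n)|$ coming from Kim--Sarnak and one copy of the large-sieve $L^2$-bound on the exceptional Fourier coefficients at level $s$, and take the minimum of the two estimates. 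The final step is the optimisation of the various parameters, which I expect to yield precisely the stated bound.
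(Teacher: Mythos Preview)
Your architecture is the same as the paper's: smooth the sharp cut-off with a parameter, apply Kuznetsov with the twisted test function $f(x)=e^{i\alpha x}g(x)$, extract the exceptional contribution as the $Y_{2|t_h|}$-integral, and bound the remaining spectral pieces via Bessel-transform estimates combined with mean-value bounds on Fourier coefficients. So the plan is sound.

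However, several of your attributions of error terms are scrambled, and following them as written would not close. First, the term $m^{1/4}(m,s)^{1/4}/s^{1/2}+n^{1/4}(n,s)^{1/4}/s^{1/2}$ does \emph{not} come from the boundary-smoothing error. The Weil bound on the smoothing tails gives only $C^{-1/2}((m,n)^{1/2}+T/s)$, which after optimising $T\asymp s^{2/3}C^{2/3}$ against the $(C/T)^{1/2}$ coming from the Maass tail produces $C^{1/6}/s^{1/3}$; the $(m,s)$, $(n,s)$ factors cannot arise this way. Those terms instead come from the regular Maass spectrum, by Cauchy--Schwarz together with the Kuznetsov-type mean-value bound $\sum_{|t_h|\le A}\frac{n}{\cosh(\pi t_h)}|\rho_h(n)|^2\ll A^2+n^{1/2}(n,s)^{1/2}/s$ (this, not the Deshouillers--Iwaniec large sieve, is the relevant tool here). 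Second, the term $(1+|\alpha|^{1/3})(mn)^{1/6}/s^{2/3}$ is not produced by a transitional window $|t|\asymp|\alpha|$ in the transform; the stationary-phase window is actually at $|t|\asymp||\alpha|^2-1|^{1/2}X$, and what it yields in the dyadic estimate is a term $(1+|\alpha|)X\asymp(1+|\alpha|)\sqrt{mn}/C$. One then handles the initial segment $c\le(1+|\alpha|^{2/3})s^{2/3}(mn)^{1/3}$ by the Weil bound and sums the dyadic bound over the rest; the balance of these two pieces at that threshold is what produces the $(1+|\alpha|^{1/3})(mn)^{1/6}/s^{2/3}$. If you rewire these three points your outline goes through essentially verbatim.
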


A few remarks are in order about this theorem. First we should remark that one has $\theta\le\frac{7}{64}$ by the work of Kim--Sarnak \cite{KimSar}. Next we observe the appearance of a main term, which is contrary to \cite{APKloosterman}. Indeed, the latter has an erroneous treatment of the exceptional spectrum\footnote{The compact domain to which they apply the mean value theorem of calculus varies and this may not be circumvented, since if the exceptional spectrum is non-empty then the function they consider has a pole at $0$.}. One may further analyse the main term by making use of asymptotics of the Bessel function of the second kind $Y_t(y)$ for $y \to 0$. However the reader familiar with Bessel functions may know that these asymptotics behave quite differently for $t=0$ and $t>0$ and therefore it would generate uniformity issues in the parameter $s$. One may also bound the main term altogether. In this case one gets the following corollary.

\begin{cor} Assume the same assumptions as in Theorem \ref{thm:1}. Then
\begin{multline*}
\sum_{\substack{c \le C \\ c \equiv 0 \mod(s)}} \frac{1}{c}S(m,n;c)e\left(\frac{2\sqrt{mn}}{c} \alpha\right) \\
\lesssim \frac{C^{\frac{1}{6}}}{s^{\frac{1}{3}}}+C^{2\theta}+(1+|\alpha|^{\frac{1}{3}})\frac{(mn)^{\frac{1}{6}}}{s^{\frac{2}{3}}}+\frac{m^{\frac{1}{4}}(m,s)^{\frac{1}{4}}+n^{\frac{1}{4}}(n,s)^{\frac{1}{4}}}{s^{\frac{1}{2}}} \\
+\min \left \{\frac{(mn)^{\frac{1}{8}+\frac{\theta}{2}}(mn,s)^{\frac{1}{8}}}{s^{\frac{1}{2}}} , \frac{(mn)^{\frac{1}{4}}(mn,s)^{\frac{1}{4}}}{s} \right \}.
\end{multline*}
\label{cor:2}
\end{cor}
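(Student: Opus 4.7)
The corollary follows immediately from Theorem \ref{thm:1} once we bound the spectral main term
$$
T := 2\pi \sum_{t_h \in i[0,\theta]} \frac{\sqrt{mn}\,\overline{\rho_h(m)}\rho_h(n)}{\cos(\pi|t_h|)} \int_{\sqrt{mn}/C}^\infty Y_{2|t_h|}(x) e^{i\alpha x} \frac{dx}{x}
$$
by $\lesssim C^{2\theta}$. My plan has two pieces: a uniform estimate of the Bessel integral in the exceptional parameter, and a spectral large-sieve estimate for the remaining sum.

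First I would estimate the integral. Write $A := \sqrt{mn}/C$ and $r := |t_h| \in [0,\theta]$. Splitting the integral at $x=1$ and using the standard asymptotics $Y_{2r}(x) \ll x^{-2r}$ as $x \to 0^+$ (with a harmless logarithmic correction at $r=0$, absorbed by the $\lesssim$ symbol) and $Y_{2r}(x) \ll x^{-1/2}$ as $x \to \infty$, the crude estimate $|e^{i\alpha x}| \le 1$ yields
$$
\left|\int_A^\infty Y_{2r}(x) e^{i\alpha x} \frac{dx}{x}\right| \lesssim A^{-2r} \le A^{-2\theta} = \frac{C^{2\theta}}{(mn)^\theta},
$$
at least whenever $A \le 1$; when $A > 1$ the integral is $O(1)$ and the resulting bound on $T$ is already dominated by the terms present in Theorem \ref{thm:1}.

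Next I would handle the spectral sum. Since $\theta < 1/2$, the factor $\cos(\pi r)^{-1}$ is bounded. A standard consequence of the Kuznetsov trace formula applied with a non-negative test function whose spectral transform dominates on the exceptional interval $i[0,\theta]$, combined with Hecke multiplicativity and the Kim--Sarnak bound $\theta \le 7/64$, produces an estimate of the shape
$$
\sum_{t_h \in i[0,\theta]} |\rho_h(m)\rho_h(n)| \lesssim \frac{(mn)^\theta}{\sqrt{mn}}.
$$
Combining the two pieces gives $|T| \lesssim (mn)^\theta \cdot C^{2\theta}/(mn)^\theta = C^{2\theta}$, and adding this to the estimate in Theorem \ref{thm:1} produces the corollary.

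The main obstacle is the spectral step: one must extract the above large-sieve bound with uniformity in the level $s$, ensuring no losses beyond the admissible polylogarithmic factor $(Cmns(1+|\alpha|))^\epsilon$. The behavior of $Y_0$ at the origin is a minor technical nuisance only; it is neutralised by the $\lesssim$ convention.
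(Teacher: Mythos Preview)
Your argument is correct and matches the paper's approach: split the Bessel integral at $x=1$, use $Y_{2r}(x)\ll_\epsilon x^{-2r-\epsilon}$ on $[\sqrt{mn}/C,1]$ to extract the factor $(C/\sqrt{mn})^{2\theta}$, note that the tail $[1,\infty)$ contributes $O(1)$ and is absorbed into the terms already appearing in Theorem~\ref{thm:1}, and bound the exceptional spectral sum by $(mn)^{\theta}$. The paper obtains the latter bound not via Kuznetsov with a positive test function but directly from the explicit basis \eqref{eq:MaassONB} together with \eqref{eq:Maassfourierbound}, \eqref{eq:Maassnorm} and the Kim--Sarnak input; your invocation of ``Hecke multiplicativity and the Kim--Sarnak bound'' is the same mechanism, so there is no substantive difference.
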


As far as the restrictions go in Theorem \ref{thm:1}, they are not very limiting. Indeed if $s \ge C^{\frac{1}{2}}$, then the Weil bound, which gives the bound $s^{-1+\epsilon}C^{\frac{1}{2}+\epsilon}$, is more than sufficient, and if $(mn)^{\frac{1}{4}} \le s \le C^{\frac{1}{2}}$ then one is automatically in the easier Linnik range and for instance the holomorphic contribution is negligible. One may also consider $mn<0$, which would lead one to analyse different Bessel transforms, or incorporate the further restriction $c \equiv a \mod(r)$ with $(a,r)=1$. However, for the latter, an analogue to Proposition \ref{prop:Maassavg} for the group $\Gamma_0(s)\cap \Gamma_1(r)$ has to be derived. In fact the associated Kloosterman sums for this group admit further cancellation, thus leading to stronger results in terms of the parameter $r$. Investigations of this sort shall be considered by the author in future work.

For $|\alpha|<1$ one may improve Theorem \ref{thm:1} slightly, thereby recovering the results of \cite{SarTsim} and \cite{APKloosterman}.
\begin{thm} Let $C \ge 1$, $\alpha \in \BR$ with $|\alpha|<1$, $s\in \BN$ and $m,n\in \BZ$ with $mn>0$, $s\ll \min \{(mn)^{\frac{1}{4}},C^{\frac{1}{2}}\}$, and $(m,n,s)=1$. Then we have
\begin{multline*}
\sum_{\substack{c \le C \\ c \equiv 0 \mod(s)}}  \frac{1}{c}S(m,n;c)e\left(\frac{2\sqrt{mn}}{c} \alpha\right) + 2 \pi \sum_{t_h \in i[0,\theta]} \frac{\sqrt{mn} \cdot \overline{\rho_h(m)}\rho_h(n)}{\cos(\pi |t_h|)} \int_{\frac{\sqrt{mn}}{C}}^{\infty} Y_{2|t_h|}(x)e^{i\alpha x}\frac{dx}{x} \\
\lesssim (1-|\alpha|)^{-\frac{1}{2}-\epsilon} \Biggl( \frac{C^{\frac{1}{6}}}{s^{\frac{1}{3}}}+\frac{m^{\frac{1}{8}}(m,s)^{\frac{1}{8}}+n^{\frac{1}{8}}(n,s)^{\frac{1}{8}}}{s^{\frac{1}{4}}}\min \left \{ (mn)^{\frac{\theta}{2}} , \frac{m^{\frac{1}{8}}(m,s)^{\frac{1}{8}}+n^{\frac{1}{8}}(n,s)^{\frac{1}{8}}}{s^{\frac{1}{4}}} \right \} \\
+ \frac{(mn)^{\frac{1}{6}}}{s^{\frac{2}{3}}}+ \min \left \{\frac{(mn)^{\frac{1}{16}+\frac{3\theta}{4}}(mn,s)^{\frac{1}{16}}}{s^{\frac{1}{4}}} , \frac{(mn)^{\frac{1}{4}}(mn,s)^{\frac{1}{4}}}{s} \right \} \Biggr)
\end{multline*}
and
\begin{multline*}
\sum_{\substack{c \le C \\ c \equiv 0 \mod(s)}}  \frac{1}{c}S(m,n;c)e\left(\frac{2\sqrt{mn}}{c} \alpha\right) \\
\lesssim (1-|\alpha|)^{-\frac{1}{2}-\epsilon} \Biggl( \frac{C^{\frac{1}{6}}}{s^{\frac{1}{3}}}+\frac{m^{\frac{1}{8}}(m,s)^{\frac{1}{8}}+n^{\frac{1}{8}}(n,s)^{\frac{1}{8}}}{s^{\frac{1}{4}}}\min \left \{ (mn)^{\frac{\theta}{2}} , \frac{m^{\frac{1}{8}}(m,s)^{\frac{1}{8}}+n^{\frac{1}{8}}(n,s)^{\frac{1}{8}}}{s^{\frac{1}{4}}} \right \} \\
+ \frac{(mn)^{\frac{1}{6}}}{s^{\frac{2}{3}}}+ \min \left \{\frac{(mn)^{\frac{1}{16}+\frac{3\theta}{4}}(mn,s)^{\frac{1}{16}}}{s^{\frac{1}{4}}} , \frac{(mn)^{\frac{1}{4}}(mn,s)^{\frac{1}{4}}}{s} \right \} \Biggr) + C^{2\theta}.
\end{multline*}
\label{thm:2}
\end{thm}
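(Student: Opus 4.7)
The proof will parallel that of Theorem \ref{thm:1}, with refinements in the Bessel-transform analysis that specifically exploit $|\alpha|<1$. The plan is to apply the Kuznetsov trace formula (Proposition \ref{prop:kuznetsov}) to a smoothed version of the sharp cutoff $\chi_{c\le C}$, thereby producing a spectral expansion indexed by exceptional and non-exceptional Maass cusp forms on $\lquotient{\Gamma_0(s)}{\BH}$, holomorphic cusp forms, and Eisenstein series. The exceptional Maass forms with $t_h\in i[0,\theta]$ supply the main term extracted on the left-hand side, and every remaining spectral piece must be shown to satisfy the claimed bound.

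First I would pass from the sharp sum $\sum_{c\le C}\frac{1}{c}S(m,n;c)e(2\sqrt{mn}\alpha/c)$ to a smooth sum weighted by $\phi(c/C)$, with the replacement error absorbed into the $C^{1/6}/s^{1/3}$ term via the Weil bound applied to a short-interval sum. Substituting $x=4\pi\sqrt{mn}/c$, so that the twist becomes a pure exponential $e(\alpha x/(2\pi))$, the relevant test function is $h(x)=\phi(4\pi\sqrt{mn}/(xC))\,e(\alpha x/(2\pi))/x$. For each spectral piece one then has to bound the associated Bessel transform of $h$ against $J_{2it_h}$, $K_{2it_h}$, or the holomorphic weight $J_{k-1}$. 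The crucial observation over Theorem \ref{thm:1} is that for $|\alpha|<1$ the composite phase in the asymptotic expansion $J_{2it}(x)\sim\sum_{\pm}c_{\pm}(t)x^{-1/2}e^{\pm ix}$, namely $(\alpha\pm 1)x$, admits no stationary point in the oscillatory regime $x\gg 1+|t|$; repeated integration by parts against $(1\pm|\alpha|)x$ thus yields stronger decay in $|t|$ than was available when $|\alpha|\ge 1$, and the extra factor $(1-|\alpha|)^{-1/2-\epsilon}$ is the cost incurred by each such integration by parts together with handling of the transitional regime $x\asymp 1+|t|$.

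The spectral averages are then estimated along the same lines as in Theorem \ref{thm:1}: one applies Cauchy--Schwarz to separate $\overline{\rho_h(m)}\rho_h(n)$, invokes the Maass-form large-sieve-type estimate of Proposition \ref{prop:Maassavg} (together with its holomorphic and Eisenstein analogues) to absorb the Fourier coefficients, and combines with the refined Bessel bounds above. A dyadic decomposition of the spectral parameter into the regions $|t_h|\ll 1$, $1\ll|t_h|\ll T$, and $|t_h|\gg T$, for a parameter $T$ to be optimised in terms of $C$, $mn$, $\alpha$ and $s$, produces the three distinct terms appearing on the right-hand side. The second estimate of Theorem \ref{thm:2} follows from the first by bounding the exceptional main term trivially using $|Y_{2|t_h|}(x)|\ll x^{-2|t_h|}$ near zero, together with Rankin--Selberg-type bounds for $\sum_h |\rho_h(m)\rho_h(n)|$, which contributes the $C^{2\theta}$ term.

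The main obstacle is the uniform bookkeeping of the Bessel transforms at the transition $x\asymp t$ when $|\alpha|$ approaches $1$, since in this regime the would-be stationary point of $(1-|\alpha|)x$ collides with the Airy-type transition of the Bessel function. Separating the contribution by a further partition of unity and treating the near-transition piece by a careful stationary phase argument while integrating by parts on the remainder is the essential technical input beyond the proof of Theorem \ref{thm:1}, and it is precisely this step that is responsible for the $(1-|\alpha|)^{-1/2-\epsilon}$ blow-up.
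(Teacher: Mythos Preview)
Your outline is essentially the paper's own approach: smooth the sharp cutoff (error by Weil), apply Kuznetsov, extract the exceptional Maass contribution as the main term, and bound the remaining spectrum via Cauchy--Schwarz together with Proposition~\ref{prop:Maassavg} and the refined Bessel-transform estimates that are available once $|\alpha|<1$ removes the stationary phase; the second inequality follows exactly as you say by bounding the exceptional integral via $Y_{2t}(x)\ll x^{-2t}$.

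Two small corrections to the mechanics. First, the parameter $T$ you optimise is not a spectral cutoff but the \emph{smoothing width} of the test function; in the paper the Maass spectrum for $|\alpha|<1$ is simply split into $[0,1]$, $[1,\infty)$ (dyadically), and $i[0,\theta]$, and the choice $T\asymp s^{2/3}C^{2/3}$ balances the sharp-cut error $T/(s\sqrt{C})$ against $(C/T)^{1/2}$ from the large-$t_h$ tail. Second, the ``main obstacle'' you describe does not in fact arise: because $\omega'(x)=\sqrt{1+x^2}/x>1\ge|\alpha|$ in the Dunster uniform expansion for $J_{2it}(2tx)$, the phase $\pm\omega(x)+\alpha x$ is monotone for all $x>0$, so a single integration by parts already yields the bounds \eqref{eq:nonholo4}--\eqref{eq:nonholo5} uniformly in $t\ge 1$, with no separate near-transition or stationary-phase treatment. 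The factor $(1-|\alpha|)^{-1/2-\epsilon}$ then emerges purely from interpolating between the two branches of the $\min$ in those bounds when summing over dyadic ranges $[2^l,2^{l+1}]$.
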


The main goal in \cite{S3covexp} was to show that it is possible to improve Sardari's work on covering exponents for $S^3$ \cite{NaserCovExp} under the assumption that Conjecture \ref{conj:2} holds. It is unfortunate that the derived upper bounds in Theorem \ref{thm:1} and \ref{thm:2} are not strong enough to offer any unconditional improvement. The reason behind this is that in the application one is very deep in the Selberg range, for which the trivial bound is still the best known bound. Discussions on exactly why the Selberg range poses great difficulties can be found in \cite{SarTsim}.

Finally, we would like to point out a little gem that is hidden inside Theorem \ref{thm:1}.
\begin{cor} Let $C \in \BR^+$ and $Q(T)=mT^2+lT+n \in \BZ[T]$ with $mn>0$. Then we have
$$
\sum_{c \le C} \frac{1}{c} \sum_{\substack{a \mod (c)\\ (a,c)=1}} e\left( \frac{Q(a)\overline{a}}{c} \right) \ll_{\epsilon} C^{\frac{1}{6}+\epsilon}+\max\{|l|,|m|,|n|\}^{\frac{23}{64}+\epsilon}.
$$
\end{cor}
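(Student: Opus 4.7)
The plan is to recognise the inner sum as a twisted Kloosterman sum of exactly the shape handled by Theorem \ref{thm:1}, and then invoke the latter with $s=1$. The key algebraic observation is that for $(a,c)=1$ one has $a\,\overline{a}\equiv 1\pmod{c}$, hence $a^2\overline{a}\equiv a\pmod{c}$, so
$$
Q(a)\overline{a}=ma^2\overline{a}+la\overline{a}+n\overline{a}\equiv ma+l+n\overline{a}\pmod{c}.
$$
Factoring out the $a$-independent phase $e(l/c)$ gives $\sum_{(a,c)=1}e(Q(a)\overline{a}/c)=e(l/c)\,S(m,n;c)$, and writing $l/c=\tfrac{2\sqrt{mn}}{c}\alpha$ with $\alpha:=\tfrac{l}{2\sqrt{mn}}$ puts the outer sum in precisely the form that Theorem \ref{thm:1} is designed to estimate.

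With $s=1$ the hypotheses $s\ll\min\{(mn)^{1/4},C^{1/2}\}$ and $(m,n,s)=1$ hold automatically. Crucially, Selberg's eigenvalue conjecture is known unconditionally for $\SL{2}(\BZ)=\Gamma_0(1)$: the smallest Laplace eigenvalue on $\lquotient{\SL{2}(\BZ)}{\BH}$ vastly exceeds $1/4$, so the exceptional spectrum is empty and the main term on the left-hand side of Theorem \ref{thm:1} vanishes identically. One thus obtains
$$
\sum_{c\le C}\frac{S(m,n;c)}{c}\,e\!\left(\frac{2\sqrt{mn}}{c}\alpha\right)\lesssim C^{1/6}+(1+|\alpha|^{1/3})(mn)^{1/6}+m^{1/4}+n^{1/4}+(mn)^{1/8+\theta/2},
$$
where the minimum appearing in Theorem \ref{thm:1} has been reduced to its first entry since $\theta\le 7/64<1/4$.

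It remains to repackage each term in terms of $M:=\max\{|l|,|m|,|n|\}$. Trivially $(mn)^{1/6}\le M^{1/3}$ and $m^{1/4}+n^{1/4}\ll M^{1/4}$, while the $\alpha$-dependent cross term simplifies to
$$
|\alpha|^{1/3}(mn)^{1/6}=2^{-1/3}|l|^{1/3}\ll M^{1/3}.
$$
Plugging in the Kim--Sarnak bound $\theta\le 7/64$ gives $(mn)^{1/8+\theta/2}=(mn)^{23/128}\le M^{23/64}$, which dominates all the other $M$-contributions. Finally, the implicit $(C(1+|\alpha|)mn)^{\epsilon}\ll(CM)^{\epsilon}$ factor inside $\lesssim$ is absorbed into the exponent of $M$, producing the advertised bound $C^{1/6+\epsilon}+M^{23/64+\epsilon}$.

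No genuine obstacle is present: this corollary is essentially a direct repackaging of Theorem \ref{thm:1} in the untwisted level case, exploiting that (i) the linear twist in $Q$ contributes exactly the phase $e(l/c)=e(\tfrac{2\sqrt{mn}}{c}\alpha)$ the theorem is built to handle, and (ii) the full-level assumption $s=1$ kills the exceptional-spectrum main term. The only extra quantitative input is the Kim--Sarnak bound, which converts $1/8+\theta/2$ into the advertised exponent $23/64$.
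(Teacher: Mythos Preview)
Your proof is correct and is exactly the argument the paper has in mind: the corollary is presented as ``a little gem hidden inside Theorem~\ref{thm:1}'' with no separate proof, and your reduction via $Q(a)\overline{a}\equiv ma+l+n\overline{a}\pmod{c}$, the identification $\alpha=l/(2\sqrt{mn})$, and the observation that the exceptional spectrum is empty for $\Gamma_0(1)$ are precisely what is needed. The arithmetic checks on the exponents ($23/128$ from $1/8+\theta/2$ with $\theta=7/64$, doubled to $23/64$ via $mn\le M^2$) are all correct.
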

This has as a consequence that either there is cancellation in the sign or very often the inner exponential sum is much smaller than $\sqrt{c}$.

\begin{acknowledgements} I would like to thank my supervisors Andrew Booker and Tim Browning for the detailed read-throughs and comments on earlier versions of this paper as well as Mehmet Kiral, Matt Young and Nick Andersen for discussions on this and related topics.

This material is partially based upon work supported by the National Science Foundation under Grant No. DMS-1440140 while the author was in residence at the Mathematical Sciences Research Institute in Berkeley, California, during the Spring 2017 semester.
\end{acknowledgements}

\section{Holomorphic and Maass Forms}
In this section we set up some notation and recall necessary facts about holomorphic and Maass forms.

Let $\mathbb{H}$ be the upper half-plane and let $\SL{2}(\BR)$ act on it by M\"obius transformations:
$$
\gamma\cdot z =\gamma z = \frac{az+b}{cz+d}, \quad j(\gamma,z)=cz+d, \text{ where }  \gamma=\begin{pmatrix}a & b \\ c & d\end{pmatrix} \in \SL{2}(\BR).
$$
We consider the following congruence subgroup
$$
\Gamma_0(s)= \left\{ \begin{pmatrix}a & b \\ c & d\end{pmatrix} \in \SL{2}(\BZ) \bigg| c \equiv 0\, \mod s \right \}.
$$
For a given cusp $\Fa$ of $\Gamma_0(s)$ we fix a matrix $\sigma_{\Fa} \in \SL{2}(\BR)$, such that $\sigma_{\Fa}\infty= \Fa$ and if $\Gamma_{\Fa}$ denotes the stabilizer of $\Fa$ then $ \sigma_{\Fa}^{-1}\Gamma_{\Fa}\sigma_{\Fa}= \Gamma_{\infty}$, where $\Gamma_{\infty}=\{ \pm T^n | n \in \BZ  \}$ is the stabilizer at $\infty$ and $T=\bigl(\begin{smallmatrix} 1&1\\0&1 \end{smallmatrix}\bigr)$. Such a matrix is called a scaling matrix for the cusp $\Fa$.\\

The space of cuspidal Maass forms consists of the real-analytic square integrable eigenfunctions of the Laplacian on the space $L^2(\lquotient{\Gamma_0(s)}{\BH})$ with respect to the inner product
\begin{equation}
\langle h_1 , h_2 \rangle =\int_{\lquotient{\Gamma_0(s)}{\BH}} h_1(z) \overline{h_2(z)} \frac{dxdy}{y^2}.
\label{eq:Maassinner}
\end{equation}
Such a Maass form $h$ possesses a Fourier expansion of the shape
$$
h(z)=\sum_{\substack{n \in \BZ \\ n \neq 0}} \rho_h(n) W_{0,it_h}(4 \pi |n| y)e(nx),
$$
where $W_{a,b}$ is the Whittaker function, $z=x+iy$, and $\frac{1}{4}+t_h^2 \ (t_h \in [0,\infty)\cup i[0,1/2])$ is the eigenvalue with respect to the Laplacian. A theory of Hecke operators as well as Atkin--Lehner theory can be developed for this space. In particular for a newform $h$ we have
$$
\sqrt{n} \rho_h(n)=\lambda_h(n)\rho_h(1), \quad \forall n \in \BN,
$$
where $\lambda_h(n)$ is the eigenvalue with respect to the $n$-th Hecke operator, which furthermore satisfies $\lambda_h(n) \ll_{\epsilon} n^{\theta+\epsilon}$, where $\theta=\frac{7}{64}$ is admissible by the work of Kim and Sarnak \cite{KimSar}.

We shall require a special basis of this space which has been worked out in \cite{BlomONB}\footnote{Corrections can be found at http://www.uni-math.gwdg.de/blomer/corrections.pdf}. For a Maass newform of level $r | s$ define the arithmetic functions

$$
r_h(c)=\sum_{b|c}\frac{\mu(b)\lambda_h(b)^2}{b}\left( \sum_{d|c} \frac{\chi_0(d)}{d} \right)^{-2}\!\!\!\!\!\!\!\!, \quad A(c)=\sum_{b|c} \frac{\mu(b)\chi_0(b)^2}{b^2}, \quad B(c)=\sum_{b|c}\frac{\mu(b)^2\chi_0(b)}{b},
$$
where $\chi_0$ is the trivial character modulo $r$, and the multiplicative function $\mu_h(c)$ is defined by the equation
$$
\left(\sum_{c \ge 1} \frac{\lambda_h(c)}{c^s} \right)^{-1}= \sum_{c \ge 1} \frac{\mu_h(c)}{c^s} .
$$
For $l|d$ define
$$
\xi_d'(l)=\frac{\mu(d/l)\lambda_h(d/l)}{r_h(d)^{\frac{1}{2}}(d/l)^{\frac{1}{2}}B(d/l)}, \quad \xi_d''(l)= \frac{\mu_h(d/l)}{r_h(d)^{\frac{1}{2}}(d/l)^{\frac{1}{2}}A(d)^{\frac{1}{2}}}.
$$
Write $d=d_1d_2$ with $d_1$ square-free and $d_2$ square-full and $(d_1,d_2)=1$. Then for $l|d$ define
\begin{equation}
\xi_d(l)=\xi_{d_1}'((d_1,l))\xi_{d_2}''((d_2,l)) \ll_{\epsilon} d^{\epsilon}.
\label{eq:xifunc}
\end{equation}
Then an orthonormal basis of Maass forms of level $s$ is given by
\begin{equation}
\bigcup_{\substack{r|s}}\bigcup_{\substack{h \text{ new} \\ \text{of level }r}}\left\{ h^d(z)= \sum_{l|d}\xi_d(l)h(lz) \Bigg | d | \tfrac{s}{r} \right \}.
\label{eq:MaassONB}
\end{equation}
We furthermore need a bound on the size of the Fourier coefficient of an element of the above basis. We have
\begin{equation}\begin{aligned}
\sqrt{n} \rho_{h^d}(n) &= \sum_{l|(d,n)} \sqrt{l} \xi_d(l)  \lambda_h\left( \frac{n}{l} \right) \rho_h(1) \\ 
& \ll_{\epsilon}  (ns)^{\epsilon} n^{\theta} |\rho_h(1)| \sum_{l|(d,n)} l^{\frac{1}{2}-\theta} \\
&  \ll_{\epsilon} (ns)^{\epsilon} n^{\theta} \left( \frac{s}{r} \right)^{\frac{1}{2}} |\rho_h(1)|,
\label{eq:Maassfourierbound}
\end{aligned}\end{equation}
where we have made use of \eqref{eq:xifunc} and $\lambda_h(n) \ll_{\epsilon} n^{\theta +\epsilon}$. Since $h$ is new of level $r$, but normalised with respect to the inner product of level $s$ \eqref{eq:Maassinner} we further have
\begin{equation}
|\rho_h(1)| \ll_{\epsilon} (s(1+|t_h|))^{\epsilon} \left(\frac{\cosh(\pi t_h)}{s} \right)^{\frac{1}{2}},
\label{eq:Maassnorm}
\end{equation}
due to Hoffstein and Lockhart \cite{HoffLock}.

Other Maass forms which are important in our discussion are the Eisenstein series associated to a cusp $\Fc$. They are defined for $\Re(\tau) > 1$ as
$$
E_{\Fc}(z,\tau) = \sum_{\gamma \in \lquotient{\Gamma_{\infty}}{\sigma_{\Fc}^{-1}\Gamma_0(s)}} \Im(\gamma z)^{\tau}
$$
and admit a meromorphic extension to the whole complex plane. They also admit a Fourier expansion of the same shape, which at the point $\tau=\frac{1}{2}+it$ we write as
$$
E_{\Fc}(z,\tfrac{1}{2}+it) = \phi_{\Fc}(0,t;z) + \sum_{n \neq 0} \phi_{\Fc}(n,t) W_{0,it}(4\pi|n|y)e(nx).
$$

For holomorphic forms the situation is quite analogous. A holomorphic cusp form of weight $k \in \BN$ of level $s$ is a holomorphic function $h: \BH \to \BC$ that satisfies $j(\gamma,z)^{-k}h(\gamma z) = h(z)$ for all $\gamma \in \Gamma_0(s)$ and is square integrable with respect to the inner product
\begin{equation}
\langle h_1 , h_2 \rangle =\int_{\lquotient{\Gamma_0(s)}{\BH}} h_1(z) \overline{h_2(z)} y^k \frac{dxdy}{y^2}.
\label{eq:holoinner}
\end{equation}
They admit a Fourier expansion of the shape
$$
h(z)= \sum_{n \ge 1} \psi_h(n) e(nz)
$$
and there is a theory of Hecke and Atkin--Lehner operators. For $h$ a newform we have
$$
\psi_h(n)=\lambda_h(n) \psi_h(1),
$$
where $\lambda_h(n)$ is the eigenvalue of the $n$-th Hecke operator, which furthermore satisfies the bound $\lambda_h(n) \ll_{\epsilon} n^{\frac{k-1}{2}+\epsilon}$ due to Deligne \cite{Deligne71}, \cite{Deligne74} and Deligne-Serre \cite{DeligneSerre75}. Analogous to the Maass case we have a nice orthonormal basis of the space $S_k(s)$ of holomorphic cusp forms of level $s$ and weight $k$:
\begin{equation}
\bigcup_{\substack{r|s}}\bigcup_{\substack{h \text{ new} \\ \text{of level }r}}\left\{ h^d(z)= \sum_{l|d}\xi_d(l)l^{\frac{k}{2}}h(lz) \Bigg | d | \tfrac{s}{r} \right \}.
\label{eq:holONB}
\end{equation}
We furthermore need a bound on the size of the Fourier coefficients of an element of the above basis. We have
\begin{equation}\begin{aligned}
\psi_{h^d}(n) &= \sum_{l|(d,n)} \xi_d(l) l^{\frac{k}{2}}  \lambda_h\left( \frac{n}{l} \right) \psi_h(1) \\ 
& \ll_{\epsilon}  (ns)^{\epsilon} n^{\frac{k-1}{2}} |\psi_h(1)| \sum_{l|(d,n)} l^{\frac{1}{2}} \\
&  \ll_{\epsilon} (ns)^{\epsilon} n^{\frac{k-1}{2}} \left( \frac{s}{r} \right)^{\frac{1}{2}} |\psi_h(1)|,
\label{eq:fourierbound}
\end{aligned}\end{equation}
where we have made use of the Deligne bound as well as \eqref{eq:xifunc}. We further have the bound
\begin{equation}
|\psi_h(1)| \ll_{\epsilon} \frac{(4\pi)^{\frac{k-1}{2}}}{s^{\frac{1}{2}} \Gamma(k)^{\frac{1}{2}}}(ks)^{\epsilon},
\label{eq:firstcoef}
\end{equation}
when $h$ is new of level $r$, but normalised with respect to \eqref{eq:holoinner}; see for example \cite[pp. 41,42]{ANTALfunc}.

\section{Proof of the Theorem}
\label{sec:proof}

We shall prove a dyadic version of Theorem \ref{thm:1} from which we shall then deduce Theorem~\ref{thm:1}.

\begin{thm}
Let $\alpha \in \BR$, $s\in \BN$ and $m,n\in \BZ$ with $mn>0$ and $(m,n,s)=1$. Assume $s \ll \min \{(mn)^{\frac{1}{4}}, C^{\frac{1}{2}} \}$. Then we have
\begin{multline*}
\sum_{\substack{C \le c < 2C \\ c \equiv 0 \mod(s)}}  \frac{1}{c}S(m,n;c)e\left(\frac{2\sqrt{mn}}{c} \alpha\right) + 2 \pi \sum_{t_h \in i[0,\theta]} \frac{\sqrt{mn} \cdot \overline{\rho_h(m)}\rho_h(n)}{\cos(\pi |t_h|)} \int_{\frac{X}{2}}^X Y_{2|t_h|}(x)e^{i\alpha x}\frac{dx}{x} \\
 \lesssim \frac{C^{\frac{1}{6}}}{s^{\frac{1}{3}}}+(1+|\alpha|)\frac{(mn)^{\frac{1}{2}}}{C}+\frac{m^{\frac{1}{4}}(m,s)^{\frac{1}{4}}+n^{\frac{1}{4}}(n,s)^{\frac{1}{4}}}{s^{\frac{1}{2}}} \\
+\min \left \{ \frac{(mn)^{\frac{\theta}{2}+\frac{1}{8}}(mn,s)^{\frac{1}{8}}}{s^{\frac{1}{2}}}, \frac{(mn)^{\frac{1}{4}}(mn,s)^{\frac{1}{4}}}{s} \right\}.
\end{multline*}
For $|\alpha|<1 $ we can do slightly better:
\begin{multline*}
\sum_{\substack{C \le c < 2C \\ c \equiv 0 \mod(s)}} \frac{1}{c}S(m,n;c)e\left(\frac{2\sqrt{mn}}{c} \alpha\right) +  2 \pi \sum_{t_h \in i[0,\theta]} \frac{\sqrt{mn} \cdot \overline{\rho_h(m)}\rho_h(n)}{\cos(\pi |t_h|)} \int_{\frac{X}{2}}^X Y_{2|t_h|}(x)e^{i\alpha x}\frac{dx}{x} \\
\lesssim (1-|\alpha|)^{-\frac{1}{2}-\epsilon} \Biggl( \frac{C^{\frac{1}{6}}}{s^{\frac{1}{3}}}  +\frac{m^{\frac{1}{8}}(m,s)^{\frac{1}{8}}+n^{\frac{1}{8}}(n,s)^{\frac{1}{8}}}{s^{\frac{1}{4}}} \min \left \{ (mn)^{\frac{\theta}{2}}  , \frac{m^{\frac{1}{8}}(m,s)^{\frac{1}{8}}+n^{\frac{1}{8}}(n,s)^{\frac{1}{8}}}{s^{\frac{1}{4}}} \right \} \\
+\frac{(mn)^{\frac{1}{2}}}{C}+\min \left \{ \frac{(mn)^{\frac{3\theta}{4}+\frac{1}{16}}(mn,s)^{\frac{1}{16}}}{s^{\frac{1}{4}}}, \frac{(mn)^{\frac{1}{4}}(mn,s)^{\frac{1}{4}}}{s} \right\} \Biggr).
\end{multline*}
\label{thm:dyadic}
\end{thm}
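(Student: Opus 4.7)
My plan is to adapt the Sarnak--Tsimerman strategy to the twisted level-$s$ setting via the Kuznetsov trace formula (Proposition \ref{prop:kuznetsov}) applied to a test function that simultaneously encodes the dyadic cut-off and the oscillation $e(2\sqrt{mn}\alpha/c)$. Let $\psi\in C_c^\infty(\BR^+)$ be a smooth approximation of $\one_{[1,2)}$ agreeing with it outside transition bands of width $\delta$ (to be optimised). I choose the Kuznetsov weight so that, after the substitution $x=4\pi\sqrt{mn}/c$, it becomes a function $\phi(x)$ supported essentially in $[X/2,X]$ with $X=4\pi\sqrt{mn}/C$ and carrying the oscillatory factor $e^{i\alpha x}$. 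The error from passing from sharp to smooth cut-off is controlled via the Weil bound on the two transition bands, and is eventually balanced against the smooth spectral error; this balancing is where the factor $(1-|\alpha|)^{-1/2-\epsilon}$ of the refined estimate will ultimately enter.

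Applying Kuznetsov rewrites the Kloosterman side as a spectral sum over holomorphic cusp forms, Maass cusp forms, and Eisenstein series for $\Gamma_0(s)$, weighted respectively by the Bessel transforms
\begin{equation*}
\check\phi^{\mathrm{hol}}(k)=\int_0^\infty\phi(x)J_{k-1}(x)\frac{dx}{x},\qquad \check\phi^{\mathrm{Maass}}(t)=\int_0^\infty\phi(x)\frac{J_{2it}(x)-J_{-2it}(x)}{\sinh(\pi t)}\frac{dx}{x}.
\end{equation*}
For exceptional $t_h=iu_h$ with $u_h\in(0,\theta]$, I would use the identity $J_{-2u}-J_{2u}=-2\sin^2(\pi u)J_{2u}-\sin(2\pi u)Y_{2u}$ to split $\check\phi^{\mathrm{Maass}}(t_h)$ into a $Y_{2|t_h|}$-piece and a $J_{2|t_h|}$-piece. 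Unwinding the substitution, the $Y$-piece produces precisely the main term appearing on the left-hand side, while the $J$-piece, being $O(x^{2u_h})$ near the origin and oscillatory for $x\gtrsim 1$, can be folded into the tempered analysis.

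The remaining task is to bound the tempered holomorphic, tempered Maass, residual exceptional, and Eisenstein contributions. For fixed $t$ or $k$ the Bessel transform is estimated using the oscillatory asymptotics $J_\nu(x)\sim(2/\pi x)^{1/2}\cos(x-\nu\pi/2-\pi/4)$: the stationary points of $\pm x+\alpha x$ occur at $\alpha=\mp 1$, yielding the stationary-phase loss $(1-|\alpha|)^{-1/2}$ when $|\alpha|<1$ and a uniform bound of size roughly $X(1+|\alpha|)^{-1}$ in general, which after multiplication by $X$ gives the term $(1+|\alpha|)\sqrt{mn}/C$. Once these Bessel integrals are under control in all relevant $t,k$-ranges, I decompose each spectrum via the orthonormal bases \eqref{eq:MaassONB} and \eqref{eq:holONB}, apply Cauchy--Schwarz to decouple $\overline{\rho_h(m)}$ from $\rho_h(n)$, insert the individual Fourier-coefficient bounds \eqref{eq:Maassfourierbound}--\eqref{eq:Maassnorm} and \eqref{eq:fourierbound}--\eqref{eq:firstcoef}, and then invoke the mean-value theorem for Fourier coefficients at each level $r\mid s$ (the analogue of Proposition \ref{prop:Maassavg} for all three spectra). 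Summing over $r\mid s$ and $d\mid s/r$ recovers the explicit $s$-dependence of all three error terms, and the $\min\{\cdot,\cdot\}$ arises from choosing between the Kim--Sarnak bound on individual coefficients and the average bound coming from the large sieve.

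The step I expect to be hardest is the uniform control of $\check\phi(t)$ when $|\alpha|$ is close to $1$ and the stationary point approaches the boundary of the support of $\psi$: the smoothness width $\delta$ must be played off against the stationary-phase gain, and this balancing act is essentially the source of the $(1-|\alpha|)^{-1/2-\epsilon}$ factor in the refined statement. For general $\alpha$ one simply accepts the weaker loss $(1+|\alpha|)\sqrt{mn}/C$, effectively decoupling the $\alpha$-dependence from the spectral estimates and yielding the first bound of the theorem.
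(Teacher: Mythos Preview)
Your overall architecture is correct and matches the paper: smooth the dyadic cut-off with a parameter to be optimised, control the smoothing error by Weil, apply Kuznetsov with test function $f(x)=e^{i\alpha x}g(x)$, extract the exceptional-spectrum main term via the $Y_{2|t_h|}$-component of $J_{-2u}-J_{2u}$, and bound the remaining spectral pieces by combining individual Fourier-coefficient bounds with the mean-square estimate of Proposition~\ref{prop:Maassavg}.

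Two attributions are off, however, and would mislead you when writing the details. First, the factor $(1-|\alpha|)^{-1/2-\epsilon}$ does \emph{not} arise from balancing the smoothing width against a stationary-phase gain; in the paper the smoothing parameter is chosen as $T\asymp s^{2/3}C^{2/3}$ \emph{independently of $\alpha$}. The $(1-|\alpha|)^{-1/2-\epsilon}$ comes entirely from the Bessel-transform bounds for $\widehat f(t)$ on the regular Maass spectrum: one proves two estimates of the shape $\widehat f(t)\ll |t|^{-3/2}(\ldots)$ and $\widehat f(t)\ll (C/T)|t|^{-5/2}(\ldots)$ involving $||\alpha|^2-1|^{-1}$, and interpolating between them over dyadic $t$-ranges produces the loss. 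Second, your account of the term $(1+|\alpha|)\sqrt{mn}/C$ is garbled (``$X(1+|\alpha|)^{-1}$ \ldots\ after multiplication by $X$'' does not yield $(1+|\alpha|)X$). In the paper this term comes from the Maass range $|t_h|\asymp ||\alpha|^2-1|^{1/2}X$, where the phase $\pm\omega(x)+\alpha x$ in the uniform asymptotic expansion of $G_{2it}$ is genuinely stationary and only the trivial bound on $\widehat f$ is available; the spectral density over this window then gives a contribution $\lesssim ||\alpha|^2-1|^{1/2}X\ll (1+|\alpha|)X$. So the $(1+|\alpha|)$ is a \emph{loss} from an unavoidable stationary-phase window in the spectral parameter, not a gain in the transform followed by a mysterious multiplication.
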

We follow the argument in \cite{SarTsim} and \cite{APKloosterman}, and replace the sharp cut off with a smooth cut off and then use Kuznetsov's trace formula. We shall require the following version of the Kuznetsov trace formula.

\begin{prop}[Kuznetsov's trace formula] Let $s \in \BN$ and $m,n \in \BZ$ be two integers with $mn>0$. Then for any $C^3$-class function $f$ with compact support in $]0,\infty)$ one has
$$\begin{aligned}
\sum_{c \equiv 0 \mod(s)} \frac{1}{c}S(m,n; c) f \left( \frac{4 \pi \sqrt{mn}}{c}  \right) =& \CH^{s}(m,n;f)+\CM^{s}(m,n;f) +\CE^{s}(m,n;f),
\end{aligned}$$
where
$$\begin{aligned}
\CH^{s}(m,n;f) =&\frac{1}{\pi} \sum_{\substack{k \equiv 0 \mod(2)\\k>0}} \sum_{\substack{\{h_{j,k}\}_j \text{ ONB} \\ \text{of } S_k(s) }} \frac{i^k \Gamma(k)}{(4 \pi \sqrt{mn})^{k-1}} \overline{\psi_{h_{j,k}}(m)}\psi_{h_{j,k}}(n) \widetilde{f}(k-1) \\
\CM^{s}(m,n;f)=&4 \pi \sum_{h} \frac{\sqrt{mn}}{\cosh \pi t_h}\overline{\rho_{h}(m)}\rho_{h}(n) \widehat{f}(t_h), \\
\CE^{s}(m,n;f)=& \sum_{\Fc \text{ cusp}} \int_{-\infty}^{\infty} \frac{\sqrt{mn}}{\cosh(\pi t)} \overline{\phi_{\Fc}(m,t)}\phi_{\Fc}(n,t) \widehat{f}(t)dt.
\end{aligned}$$
Here $\sum_h$ is a sum over an orthonormal basis of Maass forms with respect to the group $\Gamma_0(s)$ and the Bessel transforms are given by
$$\begin{aligned}
\widetilde{f}(t) &= \int_{0}^{\infty} J_t(y) f(y) \frac{dy}{y}, \\
\widehat{f}(t) &= \frac{i}{\sinh \pi t} \int_0^{\infty} \frac{J_{2it}(x)-J_{-2it}(x)}{2}f(x) \frac{dx}{x},
\end{aligned}$$
where $J_t(y)$
 is the Bessel function of the first kind of order $t$.
\label{prop:kuznetsov}
\end{prop}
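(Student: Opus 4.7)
The plan is to derive this identity by the classical Bruggeman--Kuznetsov strategy: form a bilinear expression in Poincar\'e series, expand it geometrically by unfolding and spectrally via Parseval, and then invert the resulting Bessel transform to reach a general test function $f$.

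First I would introduce the Poincar\'e series
$$
U_m(z,\tau) = \sum_{\gamma \in \Gamma_\infty \backslash \Gamma_0(s)} \Im(\gamma z)^{\tau}\, e\!\left(m\Re(\gamma z)\right), \qquad \Re(\tau)>1,
$$
and study the inner product $\langle U_m(\cdot,\tau_1), U_n(\cdot,\overline{\tau_2})\rangle$ on $L^2(\Gamma_0(s)\backslash\BH)$. Unfolding $U_m$ against the Bruhat decomposition of $\Gamma_0(s)$ groups the matrices by the lower row $(c,d)$ with $s\mid c$; summing over $d$ produces the Kloosterman sum $S(m,n;c)$, while the Iwasawa integral in $y$ yields an explicit Mellin--Bessel kernel $f_{\tau_1,\tau_2}(4\pi\sqrt{mn}/c)$. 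This is exactly the left-hand side, weighted by this particular test function.

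Next I would compute the same inner product spectrally, expanding both factors in the orthonormal basis of Maass cusp forms~\eqref{eq:MaassONB} together with the Eisenstein series $E_{\Fc}(z,\tfrac{1}{2}+it)$ ranging over the cusps $\Fc$ of $\Gamma_0(s)$. The matrix coefficient $\langle U_m,h\rangle$ unfolds against the Fourier--Whittaker expansion of $h$ and yields $\overline{\rho_h(m)}$ times a Gamma-type factor coming from $\int_0^\infty W_{0,it_h}(4\pi m y)\,y^{\tau-1}\,\tfrac{dy}{y}$; the Eisenstein analogue produces $\overline{\phi_{\Fc}(m,t)}$ and similar Gamma factors. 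Equating the geometric and spectral sides gives Kuznetsov's identity for the specific kernel $f_{\tau_1,\tau_2}$.

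To upgrade to an arbitrary $f\in C^3_c(0,\infty)$, I would invoke the Sears--Titchmarsh inversion of the Bessel transform, writing every admissible $f$ as a superposition of the explicit kernels $f_{\tau_1,\tau_2}$ appearing above. The holomorphic contribution $\CH^{s}(m,n;f)$ is then produced by the residues picked up when shifting the contour in this inversion past the points corresponding to holomorphic discrete series of weights $k\in 2\BN$; equivalently, one may run a parallel Petersson-trace computation for $S_k(s)$ using the inner product~\eqref{eq:holoinner} and assemble the three contributions. The formulas~\eqref{eq:MaassONB} and~\eqref{eq:holONB} provide the bases needed.

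The main obstacle is analytic bookkeeping: justifying the exchange of the spectral sum/integral with the Bessel inversion, controlling the continuous-spectrum contribution uniformly over all cusps $\Fc$ of $\Gamma_0(s)$, and verifying that the $C^3$ smoothness and compact support of $f$ suffice for the absolute convergence of $\CH^s$, $\CM^s$, and $\CE^s$ after applying the Bessel transforms $\widetilde{f}$ and $\widehat{f}$. Since the formula is classical, the cleanest exposition would cite Kuznetsov's original derivation and merely verify that our normalisations and cusp conventions match; the only genuinely new ingredient is the restriction of the summation to $c\equiv 0\pmod{s}$, which is automatic from the use of $\Gamma_0(s)$ in the definition of $U_m$.
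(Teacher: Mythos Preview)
Your proposal is correct and aligns with the paper's approach: the paper simply cites \cite{Proskurin} and \cite{generalkuznetsov} for the proof, and you yourself anticipate this by noting that ``the cleanest exposition would cite Kuznetsov's original derivation and merely verify that our normalisations and cusp conventions match.'' The sketch you give is indeed the standard Bruggeman--Kuznetsov route found in those references, so there is nothing to add.
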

\begin{proof} See \cite{Proskurin} or \cite{generalkuznetsov}.
\end{proof}

From now on let $f(x)=e^{i\alpha x}g(x)$ with $g$ smooth real-valued bump function satisfying the following properties
\begin{enumerate}[(i)]
	\item $g(x)=1$ for $\frac{2 \pi \sqrt{mn}}{C} \le x \le \frac{4 \pi \sqrt{mn}}{C}$,
	\item $g(x)=0$ for $x \le \frac{2 \pi \sqrt{mn}}{C+T}$ and $x \ge \frac{4 \pi \sqrt{mn}}{C-T}$,
	\item $\|g'\|_1 \ll 1$ and $\|g''\|_1 \ll \frac{C}{X \cdot T}$,
\end{enumerate}
where
$$
X=\frac{4\pi\sqrt{mn}}{C} \text{ and } 1 \le T \le \frac{C}{2}
$$
is a parameter to be chosen at a later point. Note that we have $\supp g \subseteq [\frac{X}{3},2X]$.\\

We now wish to compare the smooth sum
\begin{equation}
\sum_{\substack{c \equiv 0 \mod(s)}}  \frac{1}{c}S(m,n;c)f\left(\frac{4\pi\sqrt{mn}}{c} \alpha\right)
\label{eq:smooth}
\end{equation}
with the sharp cut off in Theorem \ref{thm:dyadic}. By making use of the Weil bound for the Kloosterman sum we find that their difference is bounded by
\begin{equation}\begin{aligned}
\sum_{\substack{C-T \le c \le C \text{ or}\\2C \le c \le 2C+2T, \\ c \equiv 0 \mod(s)}} \frac{1}{c} |S(m,n;c)|
& \le \sum_{\substack{C-T \le c \le C \text{ or}\\2C \le c \le 2C+2T \\ c \equiv 0 \mod(s)}} \frac{\tau(c)}{\sqrt{c}} (m,n,c)^{\frac{1}{2}} \\
& \le \frac{\tau(s)}{\sqrt{s}} \sum_{e|(m,n)} \sum_{\substack{\frac{C-T}{se} \le c' \le \frac{C}{se} \text{ or}\\\frac{2C}{se} \le c' \le \frac{2C+2T}{se}}} \frac{\tau(ec')}{\sqrt{ec'}} e^{\frac{1}{2}} \\
& \lesssim \frac{1}{\sqrt{s}} \sum_{e|(m,n)} \frac{\sqrt{se}}{\sqrt{C}} \left(1+\frac{T}{se} \right) \\
& \lesssim \frac{1}{\sqrt{C}}\left( (m,n)^{\frac{1}{2}}+ \frac{T}{s} \right).
\end{aligned}\label{eq:sharperror}
\end{equation}
Now we apply Kuznetsov (see Proposition \ref{prop:kuznetsov}) to the smooth sum \eqref{eq:smooth}. This leads to the expression
$$\begin{aligned}
\sum_{\substack{ c \equiv 0 \mod(s)}} \frac{1}{c}S(m,n;c)f\left(\frac{4\pi\sqrt{mn}}{c} \right) = \CH^{s}(m,n;f)+\CM^{s}(m,n;f)+\CE^{s}(m,n;f).\end{aligned}$$
We shall deal with each of these terms separately. In what follows we shall use many estimates on the Bessel transforms of $f$, which we shall summarise here, but postpone their proof until Section \ref{sec:est}.

\begin{lem}Let $f$ be as in the beginning of Section \ref{sec:proof}. Then we have
\begin{align}
\widehat{f}(t) , \widetilde{f}(t) &\ll  \frac{1+|\log(X)|+\log^{+}(|\alpha|)}{1+X^{\frac{1}{2}}+||\alpha|^2-1|^{\frac{1}{2}}X}, && \forall t \in \BR, \label{eq:trivial}\\
\widehat{f}(it) &= -\frac{1}{2}\int_{\frac{X}{2}}^{X} Y_{2t}(x)e^{i\alpha x}\frac{dx}{x} + O_{\epsilon, \delta}\left(1+ \frac{T}{C}X^{-2t-\epsilon} \right), && \forall 0 \le t\le \frac{1}{4}-\delta, \label{eq:exasymp}
\end{align}
where $\log^+(x)=\max\{0,\log(x)\}$. For $t\ge 8$ we have
\begin{align}
\int_0^{\frac{t}{2}} J_t(y) f(y) \frac{dy}{y} &\ll \one_{[2X/3, \infty)}(t) \cdot t^{-\frac{1}{2}} e^{-\frac{2}{5}t}, \label{eq:holosplit1} \\
\int_{\frac{t}{2}}^{t-t^{\frac{1}{3}}} J_t(y)f(y) \frac{dy}{y} &\ll
\one_{[\frac{1}{4},\infty)}(X)\one_{[X/3,4X]}(t) \cdot t^{-1}(\log(t))^{\frac{2}{3}}, \label{eq:holosplit2} \\
\int_{t-t^{\frac{1}{3}}}^{t+t^{\frac{1}{3}}} J_t(y) f(y) \frac{dy}{y} &\ll \one_{[\frac{1}{4},\infty)}(X)\one_{[3X/16,3X]}(t) \cdot t^{-1},  \label{eq:holosplit3}\\
\int_{t+t^{\frac{1}{3}}}^{\infty} J_t(y) f(y) \frac{dy}{y} &\ll  \one_{[\frac{1}{4},\infty)}(X)\one_{[0,3X/2]}(t) \cdot t^{-1} \min \left \{ 1+ |1-|\alpha||^{-\frac{1}{4}}, \left(  \frac{X}{t} \right)^{\frac{1}{2}} \right \}, \label{eq:holosplit4}
\end{align}
where $\one_{\CI}$ is the characteristic function of the interval $\CI$. Finally when $|t|\ge 1$ and either $|t| \nin \left[\tfrac{1}{12}||\alpha|^2-1|^{\frac{1}{2}}X,2||\alpha|^2-1|^{\frac{1}{2}}X\right]$ or $|\alpha| \le 1$ we have
\begin{align}
\widehat{f}(t) & \ll |t|^{-\frac{3}{2}} \left(1+ \min\left\{\left( \frac{X}{|t|} \right)^{\frac{1}{2}},||\alpha|^2-1|^{-1} \left( \frac{X}{|t|} \right)^{-\frac{3}{2}} \right\} \right) , \label{eq:nonholo4}\\
\widehat{f}(t) & \ll \frac{C}{T} |t|^{-\frac{5}{2}} \left(1+ \min\left\{\left( \frac{X}{|t|} \right)^{\frac{3}{2}},||\alpha|^2-1|^{-2} \left( \frac{X}{|t|} \right)^{-\frac{5}{2}} \right\} \right). \label{eq:nonholo5}
\end{align}
\label{lem:upperbnds}
\end{lem}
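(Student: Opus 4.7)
The plan is to establish the eight bounds by systematically analyzing the Bessel integrals using the support and smoothness properties of $g$, the classical relation $Y_\nu=(J_\nu\cos\pi\nu-J_{-\nu})/\sin\pi\nu$, and Debye/Airy asymptotics combined with repeated integration by parts against the phase $e^{i\alpha x}$.

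For the trivial bound \eqref{eq:trivial} I would use $|J_t(x)|,|Y_t(x)|\ll 1+|\log x|$ on compact subsets of $\mathbb{R}^+$ together with $|J_t(x)|,|Y_t(x)|\ll x^{-1/2}$ for $x\gtrsim 1+|t|^2$, and integrate against $g(x)/x$ to obtain the preliminary bound $1+|\log X|+\log^{+}|\alpha|$. To produce the gain $||\alpha|^2-1|^{-1/2}X^{-1}$ I would expand the Bessel factor in its large-$x$ Debye form, extracting oscillations of type $e^{i(\alpha\pm 1)x}$, and integrate by parts once using $\|g'\|_1\ll 1$. For the asymptotic \eqref{eq:exasymp} I would apply the identity $\tfrac{J_{-2t}(x)-J_{2t}(x)}{2\sin\pi t}=-\sin(\pi t)J_{2t}(x)-\cos(\pi t)Y_{2t}(x)$ to rewrite $\widehat f(it)$ as the sum of integrals against $Y_{2t}$ and $J_{2t}$; the $J_{2t}$-piece is $O(1)$ since $|J_{2t}|\ll 1$ on $\supp g$, while the $Y_{2t}$-piece splits into a plateau integral over $[X/2,X]$ (on which $f(x)=e^{i\alpha x}$) giving the main term, plus transition contributions over strips of total length $O(XT/C)$ bounded by $|Y_{2t}(x)|\ll x^{-2t-\epsilon}$.

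For the holomorphic splittings \eqref{eq:holosplit1}--\eqref{eq:holosplit4} I would use the three classical regimes of $J_t(y)$. On $[0,t/2]$ the bound $|J_t(y)|\ll(ey/(2t))^t$ combined with Stirling yields the $t^{-1/2}e^{-2t/5}$ decay of \eqref{eq:holosplit1}. On the Airy window $[t-t^{1/3},t+t^{1/3}]$ the bound $|J_t(y)|\ll t^{-1/3}$, integrated over a window of length $t^{1/3}$ against $dy/y\asymp dy/t$, gives the $t^{-1}$ bound of \eqref{eq:holosplit3}. On the two Debye ranges $[t/2,t-t^{1/3}]$ and $[t+t^{1/3},\infty)$, I would substitute the leading oscillatory term $J_t(y)\sim(\text{amplitude})\cos(t\phi(y/t)-\tfrac\pi4)$ and integrate by parts against the combined phase $\alpha y\pm t\phi(y/t)$; near the transition the second derivative of $\phi$ degenerates, producing the $\log^{2/3}t$ loss of \eqref{eq:holosplit2}, while away from it the presence or absence of a stationary point (interior when $|\alpha|<1$, nonexistent otherwise) yields the minimum factor in \eqref{eq:holosplit4} via a standard second-derivative lower bound.

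For \eqref{eq:nonholo4}--\eqref{eq:nonholo5} I would represent $(J_{2it}-J_{-2it})/\sinh\pi t$ as an oscillatory integral via the Mehler--Sonine or Basset/Poisson formula, obtaining for $\widehat f(t)$ a double integral in $(x,r)$ with phase of the form $\alpha x\pm x\cosh r\pm 2tr$. Under either hypothesis $|\alpha|\le 1$ or $|t|\notin[\tfrac{1}{12}||\alpha|^2-1|^{1/2}X,2||\alpha|^2-1|^{1/2}X]$, the combined phase has no stationary point inside the effective support, so $k$ integrations by parts in the $r$-variable yield factors $|t|^{-k}$ for $k\in\{3/2,5/2\}$; the extra $C/T$ in \eqref{eq:nonholo5} comes from one further integration by parts that picks up $\|g''\|_1\ll C/(XT)$, while the $(X/|t|)^{1/2}$ and $(X/|t|)^{-3/2}$ gains inside the minima arise from integration by parts in the $x$-variable against $e^{ix(\alpha\pm\cosh r)}$. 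The main obstacle throughout is the stationary-phase bookkeeping in \eqref{eq:holosplit4} and \eqref{eq:nonholo4}--\eqref{eq:nonholo5}: one must pinpoint when the combined phase fails to have a stationary point in the effective support and bound its second derivative from below by appropriate powers of $||\alpha|^2-1|$, $t$, and $X$; the excluded interval $[\tfrac{1}{12}||\alpha|^2-1|^{1/2}X,2||\alpha|^2-1|^{1/2}X]$ for $|t|$ is exactly the range where such a stationary point would fall inside the support.
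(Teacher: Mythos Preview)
Your outline for \eqref{eq:exasymp} and for the four holomorphic splittings \eqref{eq:holosplit1}--\eqref{eq:holosplit4} matches the paper's argument quite closely: the paper uses exactly the exponential-decay bound $J_t(y)\le (2\pi t)^{-1/2}(1-(y/t)^2)^{-1/4}e^{-tF(0,y/t)}$, Langer's formulas in the two Debye ranges, the Airy bound $J_t(y)\ll t^{-1/3}$ in the transition window, and then a careful stationary-phase analysis of the oscillatory range $y\ge t+t^{1/3}$ with the phase $t(\pm\omega(y/t)+\alpha y/t)$.

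However, there are two points where your sketch diverges from the paper and where your reasoning, as written, would not succeed.

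\textbf{The uniform bound \eqref{eq:trivial}.} You propose to get the denominator $1+X^{1/2}+||\alpha|^2-1|^{1/2}X$ by inserting the large-$x$ Debye asymptotics for the Bessel kernel and integrating by parts against $e^{i(\alpha\pm1)x}$. The problem is that \eqref{eq:trivial} must hold \emph{uniformly for all real $t$}, and the asymptotic $J_t(x)\sim\sqrt{2/(\pi x)}\cos(x-\tfrac{t\pi}{2}-\tfrac{\pi}{4})$ is only valid for $x\gg 1+t^2$; when $t$ is of size $X$ your expansion breaks down entirely. The paper avoids this by inserting instead the integral representations
\[
J_t(x)=\frac{1}{2\pi}\int_0^{2\pi}e^{i(x\sin\xi-t\xi)}d\xi,
\qquad
\frac{J_{2it}(x)-J_{-2it}(x)}{2i\sinh\pi t}=\frac{2}{\pi}\int_0^\infty\cos(x\cosh\xi)\cos(2t\xi)\,d\xi,
\]
swapping the order of integration, and performing a \emph{single} integration by parts in $x$ against $g(x)/x$. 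This yields $\min\{1,X^{-1}|\sin\xi+\alpha|^{-1}\}$ (resp.\ $\min\{1,X^{-1}|\cosh\xi\pm\alpha|^{-1}\}$), after which the $\xi$-integral is estimated elementarily; the $t$-dependence has disappeared before any asymptotics are needed, which is what gives the uniformity.

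\textbf{The non-holomorphic bounds \eqref{eq:nonholo4}--\eqref{eq:nonholo5}.} Your description ``$k$ integrations by parts in the $r$-variable yield factors $|t|^{-k}$ for $k\in\{3/2,5/2\}$'' cannot be right as stated: one cannot integrate by parts a half-integer number of times, and each honest integration by parts in $r$ against $e^{2itr}$ gains $t^{-1}$ while simultaneously bringing down a factor $x\sinh r$ from $\cos(x\cosh r)$, which grows. The half-integer powers arise from an entirely different mechanism. The paper substitutes $x\mapsto 2tx$ and invokes Dunster's uniform asymptotic expansion for $G_{2it}(2tx)$, which supplies directly the prefactor $(\pi t)^{-1/2}(1+x^2)^{-1/4}$ and a single oscillatory phase $2t\omega(x)$ with $\omega'(x)=\sqrt{1+x^2}/x$. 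One then integrates by parts \emph{in $x$} against $e^{2it(\pm\omega(x)+\alpha x)}$: one IBP gains $t^{-1}$ (giving $t^{-3/2}$ total) and a second IBP gains another $t^{-1}$ while producing $g'$, hence the factor $C/T$ in \eqref{eq:nonholo5}. If you wish to start from the Mehler--Sonine double integral instead, you would first need to carry out stationary phase in $r$ (critical point at $\sinh r_0=2t/x$, second derivative $\asymp x\cosh r_0$), which is precisely what recovers Dunster's $t^{-1/2}(1+x^2)^{-1/4}$ amplitude and the phase $\omega$; only after that can the $x$-IBP proceed as above. Your sketch omits this step and so does not account for the half-integer exponents.
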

One should mention that similar estimates have been derived previously by Jutila \cite{JutilaConvolutionCusp}, for a slightly different class of functions and ranges.

\subsection{The Continuous Spectrum}

The goal of this section is to prove the following bound on the continuous contribution
\begin{equation}
\CE^{s}(m,n;f) \lesssim 1, 
\label{eq:cont}
\end{equation}
For this endeavour we need the following lemma.

\begin{lem} Let $s=s_{\star}s_{\square}^2$ with $s_{\star}$ square-free and let $m,n$ positive integers. We have
\begin{equation*}
\sum_{\Fc \text{ cusp}} \frac{\sqrt{mn}}{\cosh(\pi t)}\overline{\phi_{\Fc}(m,t)}\phi_{\Fc}(n,t)
\ll_{\epsilon} \frac{(m,s_{\star}s_{\square})^{\frac{1}{2}}(n,s_{\star}s_{\square})^{\frac{1}{2}}}{s_{\star}s_{\square}}(mns(1+|t|))^{\epsilon}.
\end{equation*}
\end{lem}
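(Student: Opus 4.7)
The plan is to start from an explicit Fourier expansion of $E_\Fc(z, 1/2 + it)$ at each cusp of $\Gamma_0(s)$. Parametrise the cusps as $\Fc \sim u/w$ with $w \mid s$ and $u$ running over a reduced system modulo $(w, s/w)$. A classical computation expresses $\phi_\Fc(n, t)$ as a product of elementary multiplicative factors and Ramanujan-type sums in $n$, divided by a Dirichlet $L$-value $L(1 + 2it, \chi_0)$ attached to a principal character of conductor dividing $s$. All of the transcendental dependence on $t$ is thus confined to $\Gamma$-factors and $L(1 + 2it, \chi_0)^{-1}$: the former combine cleanly with $\cosh(\pi t)^{-1}$ via $|\Gamma(\tfrac{1}{2} + it)|^2 = \pi/\cosh(\pi t)$, and the latter is bounded by the classical de la Vall\'ee Poussin estimate $L(1 + 2it, \chi_0)^{-1} \ll_\epsilon (s(1 + |t|))^\epsilon$, which together account for the whole $(1 + |t|)^\epsilon$ loss in the claimed bound.

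After this reduction it remains to estimate the purely arithmetic sum $\sqrt{mn}\sum_\Fc \overline{\Phi_\Fc(m)}\Phi_\Fc(n)$, where $\Phi_\Fc$ denotes the arithmetic part of $\phi_\Fc$. The sum factorises over prime divisors of $s$, so it suffices to carry out a local analysis at each prime power $p^k \| s$: the cusps of denominator $p^j$ for $0 \le j \le k$ come in $\varphi((p^j, p^{k-j}))$ orbits, and each contributes a local saving of order $p^{-1}$ in $|\phi_\Fc(n, t)|^2$. Balancing these contributions over $j$ yields an extremal saving of $p^{-\lceil k/2 \rceil}$ per prime, which assembles precisely into the factor $1/(s_\star s_\square)$ in the denominator of the claimed bound. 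The gcd factors $(m, s_\star s_\square)^{1/2}(n, s_\star s_\square)^{1/2}$ then come from the trivial bound $|c_q(m)| \le (m, q)$ on the Ramanujan sums appearing in $\Phi_\Fc$, and the hypothesis $(m, n, s) = 1$ is used exactly to rule out a simultaneous divisibility of $m$ and $n$ by a common prime of $s$, which would otherwise destroy the square-root saving at that prime.

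The main obstacle is the local bookkeeping at prime powers $p^k$ with $k \ge 2$: there the intermediate-denominator cusps come in orbits of size $\varphi(p^{\min(j, k-j)})$ and interact with the Ramanujan-sum factors in a case-by-case manner, and one has to track these contributions carefully to ensure that the worst-case local bound really is $p^{-\lceil k/2 \rceil}$ without incurring additional losses from either the number of cusps or from the Euler factors hidden in $1/L(1+2it,\chi_0)$.
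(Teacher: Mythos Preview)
The paper does not prove this lemma at all: its entire proof reads ``This is part of \cite[Lemma 1]{BlomerKloos}.'' So your proposal is not a comparison target so much as an attempt to reconstruct what Blomer--Mili\'cevi\'c do. In that sense your overall strategy is the right one and is essentially the approach taken in the cited reference: parametrise the cusps of $\Gamma_0(s)$ by $u/w$ with $w\mid s$, use the explicit Fourier expansion of $E_{\Fc}(z,\tfrac12+it)$ in terms of Ramanujan-type sums and the factor $\zeta_s(1+2it)^{-1}$, strip off the archimedean dependence via $|\Gamma(\tfrac12+it)|^2=\pi/\cosh(\pi t)$ and the standard lower bound for $|\zeta(1+2it)|$, and then carry out a prime-by-prime local analysis.

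There is, however, a genuine slip in your sketch. You write that ``the hypothesis $(m,n,s)=1$ is used exactly to rule out a simultaneous divisibility of $m$ and $n$ by a common prime of $s$, which would otherwise destroy the square-root saving at that prime.'' But the lemma as stated carries \emph{no} coprimality hypothesis on $m,n,s$; the condition $(m,n,s)=1$ belongs to the main theorems of the paper, not to this lemma, and the Blomer--Mili\'cevi\'c bound holds unconditionally. The point is that the claimed right-hand side already contains the factors $(m,s_\star s_\square)^{1/2}(n,s_\star s_\square)^{1/2}$, and these are precisely what absorb the loss when a prime $p\mid s$ divides both $m$ and $n$. In your local analysis at $p^k\|s$, if $p\mid m$ and $p\mid n$ the Ramanujan-sum factors each pick up an extra $p$, but this is exactly compensated by the gcd factors in the numerator of the bound; there is no need to exclude this case by hypothesis. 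So your sketch is on the right track, but you should remove the appeal to $(m,n,s)=1$ and instead verify directly that the local bound at each prime $p$ is
\[
p^{-\lceil k/2\rceil}\,(m,p^{\lceil k/2\rceil})^{1/2}(n,p^{\lceil k/2\rceil})^{1/2}
\]
up to $(1+|t|)^\epsilon$-losses, with no side conditions.
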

\begin{proof} This is part of \cite[Lemma 1]{BlomerKloos}.
\end{proof}
Substituting this inequality into \eqref{eq:cont} yields the bound
$$\begin{aligned}
\CE^{s}(m,n;f) &\lesssim \frac{(m,s_{\star}s_{\square})^{\frac{1}{2}}(n,s_{\star}s_{\square})^{\frac{1}{2}}}{s_{\star}s_{\square}} \int_{-\infty}^{\infty} (1+|t|)^{\epsilon} |\widehat{f}(t)| dt \\
&\lesssim \int_{-\infty}^{\infty} (1+|t|)^{\epsilon} |\widehat{f}(t)| dt.
\end{aligned}$$
We split the integral up into three parts
$$\begin{aligned}
\CI_1 &= \pm[\tfrac{1}{12}||\alpha|^2-1|^{\frac{1}{2}}X,2||\alpha|^2-1|^{\frac{1}{2}}X],\\
\CI_2 &= [-\max\{1,X^{\frac{1}{2}}\},\max\{1,X^{\frac{1}{2}}\}] \backslash \CI_1, \\
\CI_3 &= \pm [\max\{1,X^{\frac{1}{2}}\},\infty) \backslash \CI_1.
\end{aligned}$$
For $\CI_1$ we use \eqref{eq:trivial} and arrive at
$$\begin{aligned}
\int_{\CI_1} (1+|t|)^{\epsilon} |\widehat{f}(t)| dt &\ll_{\epsilon} \int_{\CI_1} (1+|t|)^{\epsilon} \frac{1+|\log(X)|+\log^+(|\alpha|)}{||\alpha|^2-1|^{\frac{1}{2}}X} dt \\
& \ll_{\epsilon} (1+X)^{\epsilon}(1+|\alpha|)^{\epsilon} (1+|\log(X)|+\log^+(|\alpha|)) \\
&\lesssim 1.
\end{aligned}$$
For $\CI_2$ we use \eqref{eq:trivial} again and arrive at
$$\begin{aligned}
\int_{\CI_2} (1+|t|)^{\epsilon} |\widehat{f}(t)| dt &\ll_{\epsilon} \int_{\CI_2} (1+|t|)^{\epsilon} \frac{1+|\log(X)|+\log^+(|\alpha|)}{1+X^{\frac{1}{2}}} dt \\
& \ll_{\epsilon} (1+X)^{\epsilon} (1+|\log(X)|+\log^+(|\alpha|)) \\
& \lesssim 1.
\end{aligned}$$
For $\CI_3$ we use \eqref{eq:nonholo4} and arrive at
$$\begin{aligned}
\int_{\CI_3} (1+|t|)^{\epsilon} |\widehat{f}(t)| dt &\ll_{\epsilon} \int_{\CI_3} |t|^{-\frac{3}{2}+\epsilon}\left(1+\left( \frac{X}{|t|} \right)^{\frac{1}{2}} \right) dt \\
&\ll_{\epsilon} \min\{1,X^{-\frac{1}{4}+\epsilon}\}+X^{\frac{1}{2}}\min\{1,X^{-\frac{1}{2}+\epsilon}\} \\
& \lesssim 1.
\end{aligned}$$
This concludes the proof of \eqref{eq:cont}.

\subsection{The Holomorphic Spectrum} The goal of this section is to prove the following inequality

\begin{equation}
\CH^{s}(m,n;f) \lesssim 1+X. 
\label{eq:holofull}
\end{equation}
In order to prove this inequality we choose our orthonormal basis as in \eqref{eq:holONB}. Then 
$$\begin{aligned}
\CH^s(m,n;f) = &  \frac{1}{\pi}\sum_{\substack{k \equiv 0\mod(2)\\k>0}} \sum_{r|s} \sum_{\substack{h \in S_k(r)\\ \text{new}}} \sum_{d | \frac{s}{r}} \frac{i^k \Gamma(k)}{(4 \pi \sqrt{mn})^{k-1}} \overline{\psi_{h^d}(m)}\psi_{h^d}(n) \widetilde{f}(k-1) \\
\lesssim & \sum_{\substack{k \equiv 0 \mod(2)\\k>0}} \sum_{r|s} \sum_{\substack{h \in S_k(r)\\ \text{new}}} \sum_{d | \frac{s}{r}} \frac{\Gamma(k)}{(4 \pi )^{k-1}} \frac{s}{r} |\psi_{h}(1)|^2 |\widetilde{f}(k-1)| \\
\lesssim &  \sum_{\substack{k \equiv 0 \mod(2)\\k>0}} \sum_{r|s} \sum_{\substack{h \in S_k(r)\\ \text{new}}} \frac{1}{r} |\widetilde{f}(k-1)| \\
\lesssim & \sum_{\substack{k \equiv 0 \mod(2)\\k> 0}} k^{1+\epsilon}|\widetilde{f}(k-1)|,
\end{aligned}$$
where we have made use of \eqref{eq:fourierbound}, \eqref{eq:firstcoef}, and $\dim S_k(r) \ll rk$ . The latter sum we split up into $k \le 9$ and $k > 9$. Using \eqref{eq:trivial} we find
$$
\sum_{\substack{k \equiv 0 \mod(2)\\9 \ge k>0}} k^{1+\epsilon}|\widetilde{f}(k-1)| \ll 1+ |\log(X)|+\log^+(|\alpha|) \lesssim 1.
$$
We also find
$$
\sum_{\substack{k \equiv 0 \mod(2)\\ k>9}} k^{1+\epsilon}|\widetilde{f}(k-1)| \le \CS_1+\CS_2+\CS_3+\CS_4,
$$
where
$$\begin{aligned}
\CS_1 & = \sum_{\substack{k \equiv 0 \mod(2)\\ k>9}} k^{1+\epsilon}\left|\int_{0}^{\frac{k-1}{2}}J_{k-1}(y)f(y)\frac{dy}{y}\right|,\\
\CS_2 & = \sum_{\substack{k \equiv 0 \mod(2)\\ k>9}} k^{1+\epsilon}\left|\int_{\frac{k-1}{2}}^{(k-1)-(k-1)^{\frac{1}{3}}}J_{k-1}(y)f(y)\frac{dy}{y}\right|,\\
\CS_3 & = \sum_{\substack{k \equiv 0 \mod(2)\\ k>9}} k^{1+\epsilon}\left|\int_{(k-1)-(k-1)^{\frac{1}{3}}}^{(k-1)+(k-1)^{\frac{1}{3}}}J_{k-1}(y)f(y)\frac{dy}{y}\right|,\\
\CS_4 & = \sum_{\substack{k \equiv 0 \mod(2)\\ k>9}} k^{1+\epsilon}\left|\int_{(k-1)+(k-1)^{\frac{1}{3}}}^{\infty}J_{k-1}(y)f(y)\frac{dy}{y}\right|.\\
\end{aligned}$$
Using \eqref{eq:holosplit1} we find
$$
\CS_1 \ll_{\epsilon} \sum_{k > 9} k^{\frac{1}{2}+\epsilon}e^{-\frac{2}{5}k} \ll_{\epsilon} 1.
$$
Using \eqref{eq:holosplit2} we find
$$
\CS_2 \ll_{\epsilon} \sum_{X/3 \le k-1\le 4X} k^{\epsilon} \lesssim 1+X.
$$
Using \eqref{eq:holosplit3} we find
$$
\CS_3 \ll_{\epsilon} \sum_{3X/16 \le k-1 \le 3X} k^{\epsilon} \lesssim 1+X.
$$
Using \eqref{eq:holosplit4} we find
$$
\CS_4 \ll_{\epsilon} \sum_{ 3X/2 \ge k-1 > 8} k^{\epsilon} \left( \frac{X}{k} \right)^{\frac{1}{2}} \lesssim 1+X.
$$
The claim \eqref{eq:holofull} now follows.\\

\subsection{The Non-Holomorphic Spectrum}

In this section we shall prove the following two estimates
\begin{multline}
\CM^s(m,n; f) + 2 \pi \sum_{t_h \in i[0,\theta]} \frac{\sqrt{mn} \cdot \overline{\rho_h(m)}\rho_h(n)}{\cos(\pi |t_h|)} \int_{\frac{X}{2}}^X Y_{2|t_h|}(x)e^{i\alpha x}\frac{dx}{x}  \\
\lesssim  \left(\frac{C}{T}\right)^{\frac{1}{2}}+(1+|\alpha|)X+\left(1+\frac{T}{C}X^{-2\theta}\right) \Biggl(1+ \frac{m^{\frac{1}{4}}(m,s)^{\frac{1}{4}}+n^{\frac{1}{4}}(n,s)^{\frac{1}{4}}}{s^{\frac{1}{2}}} \\
+ \min \left\{\frac{(mn)^{\frac{\theta}{2}+\frac{1}{8}}(mn,s)^{\frac{1}{8}}}{s^{\frac{1}{2}}}, \frac{(mn)^{\frac{1}{4}}(mn,s)^{\frac{1}{4}}}{s} \right\} \Biggr) 
\label{eq:Maassfull}
\end{multline}
and for $|\alpha|<1$ also
\begin{equation}\begin{aligned}
\CM^s(&m,n; f) + 2 \pi\sum_{t_h \in i[0,\theta]} \frac{\sqrt{mn} \cdot \overline{\rho_h(m)}\rho_h(n)}{\cos(\pi |t_h|)} \int_{\frac{X}{2}}^X Y_{2|t_h|}(x)e^{i\alpha x}\frac{dx}{x}  \\
\lesssim & (1-|\alpha|)^{-\frac{1}{2}-\epsilon}  \Biggl[ \left(\frac{C}{T}\right)^{\frac{1}{2}}+\left(1+\frac{T}{C}X^{-2\theta}\right) \\
& \quad \times \Biggl(1+ \frac{m^{\frac{1}{8}}(m,s)^{\frac{1}{8}}+n^{\frac{1}{8}}(n,s)^{\frac{1}{8}}}{s^{\frac{1}{4}}} \min\left\{ (mn)^{\frac{\theta}{2}} , \frac{m^{\frac{1}{8}}(m,s)^{\frac{1}{8}}+n^{\frac{1}{8}}(n,s)^{\frac{1}{8}}}{s^{\frac{1}{4}}} \right \} \\
& \quad \quad \quad \quad \quad \quad\quad \quad\quad \quad + \min \left\{\frac{(mn)^{\frac{3\theta}{4}+\frac{1}{16}}(mn,s)^{\frac{1}{16}}}{s^{\frac{1}{4}}}, \frac{(mn)^{\frac{1}{4}}(mn,s)^{\frac{1}{4}}}{s} \right\} \Biggr) \Biggr].
\label{eq:Maassfullalphadependent}
\end{aligned}\end{equation}
We shall require the following proposition.
\begin{prop} Let $A \ge 1$ and $n \in \BN$. Then we have for the group $\Gamma_0(s)$
$$
\sum_{\substack{|t_h| \le A}} \frac{n}{\cosh(\pi t_h)} |\rho_h(n)|^2 \ll_{\epsilon} A^2+\frac{\sqrt{n}}{s}(n,s)^{\frac{1}{2}}(ns)^{\epsilon}.
$$
\label{prop:Maassavg}
\end{prop}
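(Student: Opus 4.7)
The plan is to apply Kuznetsov's formula (Proposition \ref{prop:kuznetsov}) in the diagonal case $m=n$ and exploit positivity on the spectral side. First I would construct a nonnegative $f \in C_c^3((0,\infty))$ whose Bessel transforms satisfy $\widehat{f}(t) \ge \one_{[-A,A]}(t)$ for all real $t$, $\widehat{f}(ir) \ge \one_{[0,1/2]}(r)$ for all $r \in [0,\tfrac{1}{2}]$ (to cover the exceptional spectrum), and $(-1)^{k/2}\widetilde{f}(k-1) \ge 0$ for all positive even $k$ (the last ensuring that the holomorphic contribution $\CH^s(n,n;f)$ is nonnegative, since $i^k=(-1)^{k/2}$ for even $k$). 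Majorizing test functions of this kind are classical; see for example Iwaniec's \emph{Spectral Methods of Automorphic Forms}. The resulting $f$ is essentially localized on a single dyadic scale determined by $A$, with all relevant norms controlled polynomially in $A$.

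With such $f$ in hand, nonnegativity of $|\rho_h(n)|^2$, $|\psi_{h_{j,k}}(n)|^2$ and $|\phi_{\Fc}(n,t)|^2$ gives
$$
\sum_{|t_h| \le A} \frac{n |\rho_h(n)|^2}{\cosh(\pi t_h)} \;\le\; \sum_h \frac{n\,\widehat{f}(t_h)|\rho_h(n)|^2}{\cosh(\pi t_h)} \;=\; \frac{1}{4\pi}\,\CM^s(n,n;f),
$$
while $\CH^s(n,n;f), \CE^s(n,n;f) \ge 0$ by construction. Hence Proposition \ref{prop:kuznetsov} bounds the left-hand side by the Kloosterman side
$$
\left|\sum_{c \equiv 0 \mod(s)} \frac{S(n,n;c)}{c}\, f\!\left(\frac{4\pi n}{c}\right)\right|.
$$

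It remains to estimate this Kloosterman side. I would apply the Weil bound $|S(n,n;c)| \le \tau(c)(n,c)^{1/2}\sqrt{c}$, restrict to the single dyadic range $c \asymp n/A$ forced by the support of $f$, and organize the sum by the common divisor $e = (n,c)$, writing $e = e_1 e_2$ with $e_1 \mid (n,s)$. Routine divisor-sum bookkeeping then produces the contribution $(ns)^\epsilon \sqrt{n}(n,s)^{1/2}/s$ that appears in the statement. The remaining $A^2$ term reflects the ``diagonal'' mass of $f$: it corresponds to the size of $\|f\|_\infty$, which is forced to be $\asymp A^2$ by the Plancherel-type relation implicit in the majorization $\widehat{f}\ge\one_{[-A,A]}$.

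The principal difficulty is the construction of the test function with the three-fold positivity/majorization property together with sharp control on its support and size; once that is achieved, the Weil bound plus standard divisor-sum manipulations finish the argument. A self-contained alternative would decompose the ONB \eqref{eq:MaassONB} into newforms, use Hoffstein--Lockhart \eqref{eq:Maassnorm} to bound $|\rho_h(1)|^2$, and invoke a newform spectral large sieve---at the cost of explicit combinatorics with the coefficients $\xi_d(l)$ of \eqref{eq:xifunc}.
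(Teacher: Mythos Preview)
Your overall strategy---apply Kuznetsov with $m=n$, use positivity on the spectral side, and estimate the geometric side via Weil---is exactly what Kuznetsov and the references cited in the paper do. The gap is in the implementation. Proposition~\ref{prop:kuznetsov} is the \emph{Kloosterman-to-spectral} direction and contains no diagonal term; the standard proofs (Kuznetsov \cite{Kuz1}, Iwaniec's book, \cite{APKloosterman}) instead use the \emph{spectral-to-Kloosterman} formula, in which one picks an even test function $h(t)$ on the spectral side (for instance $h(t)\asymp e^{-(t/A)^2}$) and obtains an identity of the shape
\[
\sum_h \frac{n\,h(t_h)}{\cosh(\pi t_h)}|\rho_h(n)|^2 \;+\;(\text{cont.})\;+\;(\text{holom.})\;=\;\frac{\delta_{n,n}}{\pi^2}\int_{\BR} h(r)\,r\tanh(\pi r)\,dr\;+\;\sum_{s\mid c}\frac{S(n,n;c)}{c}\,h^{+}\!\left(\tfrac{4\pi n}{c}\right).
\]
The $A^2$ term is precisely this delta contribution $\pi^{-2}\int h(r)\,r\tanh(\pi r)\,dr\asymp A^2$; the Kloosterman tail is then bounded by Weil to give the $n^{1/2}(n,s)^{1/2}s^{-1}(ns)^\epsilon$ piece.

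Your attempt to extract $A^2$ from the Kloosterman side alone does not go through. If $f$ were supported at scale $\asymp A$ with $\|f\|_\infty\asymp A^2$ as you suggest, the Weil bound over $c\asymp n/A$, $s\mid c$, yields roughly $A^{3/2}n^{1/2}(n,s)^{1/2}s^{-1}$, which is neither $A^2$ nor $A^2+n^{1/2}(n,s)^{1/2}s^{-1}$. Moreover, constructing a \emph{compactly supported} $f\in C_c^3((0,\infty))$ whose Bessel transforms simultaneously satisfy $\widehat{f}(t)\ge\one_{[-A,A]}(t)$, $\widehat{f}(ir)\ge 0$, and $(-1)^{k/2}\widetilde{f}(k-1)\ge 0$ is not what one finds in Iwaniec's book---there the majorant $h$ lives on the spectral side and its inverse transform $h^{+}$ is not compactly supported. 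So either switch to the spectral-to-Kloosterman formula with its delta term (this is the route the cited references take and requires only cosmetic changes for $\Gamma_0(s)$), or, if you insist on the direction of Proposition~\ref{prop:kuznetsov}, you must actually build such an $f$ and account carefully for the Kloosterman side; the sketch as written does not do this.
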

\begin{proof} For the full modular group this is due to Kuznetov \cite[Eq. (5.19)]{Kuz1} and only minor modifications yields the above, see for example \cite[Lemma 2.9]{BerkeShiftedConv}  or \cite[Theorem 9]{APKloosterman}.
\end{proof}

Let us first prove \eqref{eq:Maassfull}. We split the summation over $t_h$ in $\CM^s(m,n;f)$ into various ranges $\CI_1,\dots,\CI_4$ which are treated individually. They are
$$\begin{aligned}
\CI_1 &= \left[0,\max\left\{1,X^{\frac{1}{2}}\right\}\right] ,\\
\CI_2 &= \left[\tfrac{1}{12}||\alpha|^2-1|^{\frac{1}{2}}X,2||\alpha|^2-1|^{\frac{1}{2}}X\right]\backslash \CI_1, \\
\CI_3 &= \left[\max\left\{1,X^{\frac{1}{2}}\right\}, \infty \right) \backslash \CI_2, \\
\CI_4 &= i\left[ 0, \frac{1}{2} \right].
\end{aligned}$$

The first way to treat the range $\CI_1$ is to choose the basis \eqref{eq:MaassONB} and use \eqref{eq:Maassfourierbound} as well as \eqref{eq:Maassnorm}:
\begin{equation*}\begin{aligned}
\sum_{t_h \in \CI_1} \frac{\sqrt{mn}}{\cosh(\pi t_h)} \overline{\rho_h(m)}\rho_h(n) \widehat{f}(t_h) &\lesssim (mn)^{\theta} \sum_{r|s} \frac{1}{r} \sum_{\substack{t_h \in \CI_1 \\ \text{new of level }r}} \sum_{d|\frac{s}{r}} (1+|t_h|)^{\epsilon} \sup_{t \in \CI_1} |\widehat{f}(t)|. \\
\end{aligned}\end{equation*}
Next we use \eqref{eq:trivial} to bound the transform and a uniform Weyl law to bound the number of Maass forms $h$ of level $r$ with $t_h \le T$ by $r^{1+\epsilon}T^2$ (see for example \cite[Corollary 3.2.3.]{uniformweyl}). We arrive at the bound
\begin{equation}
\lesssim (mn)^{\theta} \left(1+X^{\frac{1}{2}}\right).
\label{eq:CI1a}
\end{equation} 
A second way to treat the range $\CI_1$ is to apply the Cauchy-Schwarz inequality in conjunction with Proposition \ref{prop:Maassavg} and \eqref{eq:trivial}:
\begin{equation}\begin{aligned}
&\sum_{t_h \in \CI_1} \frac{\sqrt{mn}}{\cosh(\pi t_h)} \overline{\rho_h(m)}\rho_h(n) \widehat{f}(t_h) \\
\le & \left( \sum_{t_h \in \CI_1} \frac{m}{\cosh(\pi t_h)} |\rho_h(m)|^2 \right)^{\frac{1}{2}} \left( \sum_{t_h \in \CI_1} \frac{n}{\cosh(\pi t_h)} |\rho_j(n)|^2 \right)^{\frac{1}{2}} \sup_{t \in \CI_1} |\widehat{f}(t)| \\
\lesssim & \left( 1+X+\frac{\sqrt{m}}{s}(m,s)^{\frac{1}{2}} \right)^{\frac{1}{2}} \left( 1+X+\frac{\sqrt{n}}{s}(n,s)^{\frac{1}{2}} \right)^{\frac{1}{2}} \frac{1}{1+X^{\frac{1}{2}}} \\
\lesssim & 1+X^{\frac{1}{2}}+\frac{m^{\frac{1}{4}}(m,s)^{\frac{1}{4}}+n^{\frac{1}{4}}(n,s)^{\frac{1}{4}}}{s^{\frac{1}{2}}}+\frac{(mn)^{\frac{1}{4}}(mn,s)^{\frac{1}{4}}}{s(1+X^{\frac{1}{2}})}.
\label{eq:CI1b}
\end{aligned}\end{equation}

The range $\CI_2$ we treat in exactly the same manner and we arrive at the inequalities
\begin{equation}\begin{aligned}
\sum_{t_h \in \CI_2} \frac{\sqrt{mn}}{\cosh(\pi t_h)} \overline{\rho_h(m)}\rho_h(n) \widehat{f}(t_h) &\lesssim (mn)^{\theta} \frac{\left(1+||\alpha|^2-1|^{\frac{1}{2}}X\right)^2}{1+||\alpha|^2-1|^{\frac{1}{2}}X} \\
&\lesssim (mn)^{\theta}\left(1+||\alpha|^2-1|^{\frac{1}{2}}X\right)
\label{eq:CI2a}
\end{aligned}\end{equation}
and
\begin{equation}\begin{aligned}
 &\sum_{t_h \in \CI_2} \frac{\sqrt{mn}}{\cosh(\pi t_h)} \overline{\rho_h(m)}\rho_h(n) \widehat{f}(t_h) \\
\lesssim & \frac{\left(1+||\alpha|^2-1|^{\frac{1}{2}}X+\frac{m^{\frac{1}{4}}(m,s)^{\frac{1}{4}}}{s^{\frac{1}{2}}}\right)\left(1+||\alpha|^2-1|^{\frac{1}{2}}X+\frac{n^{\frac{1}{4}}(n,s)^{\frac{1}{4}}}{s^{\frac{1}{2}}}\right)}{1+X^{\frac{1}{2}}+||\alpha|^2-1|^{\frac{1}{2}}X} \\
\lesssim & 1+||\alpha|^2-1|^{\frac{1}{2}}X + \frac{m^{\frac{1}{4}}(m,s)^{\frac{1}{4}}+n^{\frac{1}{4}}(n,s)^{\frac{1}{4}}}{s^{\frac{1}{2}}} + \frac{(mn)^{\frac{1}{4}}(mn,s)^{\frac{1}{4}}}{s(1+||\alpha|^2-1|^{\frac{1}{2}}X)}.
\label{eq:CI2b}
\end{aligned}\end{equation}

The range $\CI_3$ we further split into dyadic ranges
$$
\CI_3(l)= [2^l\max\{1,X^{\frac{1}{2}}\},2^{l+1}\max\{1,X^{\frac{1}{2}}\}]\backslash \CI_2, \quad l \ge 0.
$$
Again we can estimate
\begin{equation}
\sum_{t_h \in \CI_3(l)} \frac{\sqrt{mn}}{\cosh(\pi t_h)} |\rho_h(m)\rho_h(n)| \lesssim (mn)^{\theta} 2^{2l} (1+X)
\label{eq:helpa}
\end{equation}
and
\begin{multline}
\sum_{t_h \in \CI_3(l)} \frac{\sqrt{mn}}{\cosh(\pi t_h)} |\rho_h(m)\rho_h(n)| \\
\lesssim 2^{2l} (1+X) + 2^l\left(1+X^{\frac{1}{2}}\right) \frac{m^{\frac{1}{4}}(m,s)^{\frac{1}{4}}+n^{\frac{1}{4}}(n,s)^{\frac{1}{4}}}{s^{\frac{1}{2}}}+\frac{(mn)^{\frac{1}{4}}(mn,s)^{\frac{1}{4}}}{s} ,
\label{eq:helpb}
\end{multline}
However this time we use \eqref{eq:nonholo4} and \eqref{eq:nonholo5} to deal with the transform. We have
\begin{equation}
\sup_{t \in \CI_3(l)} |\widehat{f}(t)| \lesssim \begin{cases}  \min \left\{ \frac{1+X^{\frac{1}{2}}}{2^{2l}(1+X)}, \frac{C}{T} \frac{1+X^{\frac{3}{2}}}{2^{4l}(1+X)^2} \right\} , & \text{for } l \le \log_2(\max\{1,X^{\frac{1}{2}}\}) , \\ \min\left\{ \frac{1}{2^{\frac{3}{2}l}(1+X)^{\frac{3}{4}}} , \frac{C}{T} \frac{1}{2^{\frac{5}{2}l}(1+X)^{\frac{5}{4}}} \right\} ,& \text{for } l > \log_2(\max\{1,X^{\frac{1}{2}}\}) . \end{cases}
\label{eq:helpc}
\end{equation}
Combining \eqref{eq:helpa}, \eqref{eq:helpb} and \eqref{eq:helpc} we find that the contribution stemming from $l \le \log_2(\max\{1,X^{\frac{1}{2}}\})$ is
\begin{equation}\begin{aligned}
&\lesssim \sum_{l \le \log_2(\max\{1,X^{\frac{1}{2}}\})} \Biggl(1+ X^{\frac{1}{2}} + 2^{-l} \frac{m^{\frac{1}{4}}(m,s)^{\frac{1}{4}}+n^{\frac{1}{4}}(n,s)^{\frac{1}{4}}}{s^{\frac{1}{2}}} \\
& \quad \quad \quad \quad \quad \quad \quad \quad \quad \quad \quad \quad  \quad \quad \quad  +\min \left\{ (mn)^{\theta}(1+X)^{\frac{1}{2}}, 2^{-2l} \frac{(mn)^{\frac{1}{4}}(mn,s)^{\frac{1}{4}}}{s(1+X)^{\frac{1}{2}}}\right \} \Biggr) \\
&\lesssim 1+ X^{\frac{1}{2}} + \frac{m^{\frac{1}{4}}(m,s)^{\frac{1}{4}}+n^{\frac{1}{4}}(n,s)^{\frac{1}{4}}}{s^{\frac{1}{2}}} + \min \left\{ \frac{(mn)^{\frac{\theta}{2}+\frac{1}{8}}(mn,s)^{\frac{1}{8}}}{s^{\frac{1}{2}}} , \frac{(mn)^{\frac{1}{4}}(mn,s)^{\frac{1}{4}}}{s}\right \},
\label{eq:CI3a}
\end{aligned}\end{equation}
the contribution from $l > \log_2(\max\{1,X^{\frac{1}{2}}\})$ is
\begin{equation}\begin{aligned}
&\lesssim \sum_{l > \log_2(\max\{1,X^{\frac{1}{2}}\})} \Biggl( \left( \frac{C}{T} \right)^{\frac{1}{2}+\delta} 2^{-\delta l} +  2^{-\frac{l}{2}} \frac{m^{\frac{1}{4}}(m,s)^{\frac{1}{4}}+n^{\frac{1}{4}}(n,s)^{\frac{1}{4}}}{s^{\frac{1}{2}}} \\
& \quad \quad \quad \quad\quad \quad\quad \quad\quad \quad\quad \quad\quad \ \,+ 2^{-\frac{l}{2}} \min \Biggl \{ \frac{(mn)^{\frac{\theta}{2}+\frac{1}{8}}(mn,s)^{\frac{1}{8}}}{s^{\frac{1}{2}}}, \frac{(mn)^{\frac{1}{4}}(mn,s)^{\frac{1}{4}}}{s} \Biggr\} \Biggr) \\
&\lesssim \left( \frac{C}{T} \right)^{\frac{1}{2}} + \frac{m^{\frac{1}{4}}(m,s)^{\frac{1}{4}}+n^{\frac{1}{4}}(n,s)^{\frac{1}{4}}}{s^{\frac{1}{2}}} + \min \left\{\frac{(mn)^{\frac{\theta}{2}+\frac{1}{8}}(mn,s)^{\frac{1}{8}}}{s^{\frac{1}{2}}}, \frac{(mn)^{\frac{1}{4}}(mn,s)^{\frac{1}{4}}}{s} \right\}  ,
\label{eq:CI3b}
\end{aligned}\end{equation}
for a sufficiently small $\delta>0$.

For the contribution from $\CI_4$ we first note that we have $|t_h|\le \theta$ for $t_h \in \CI_4$ by \cite{KimSar}. We first insert \eqref{eq:exasymp} and further find
\begin{multline}
4 \pi \sum_{t_h \in i[0,\theta]} \frac{\sqrt{mn}}{\cosh(\pi t_h)} \overline{\rho_h(m)}\rho_h(n) \left( -\frac{1}{2}\int_{\frac{X}{2}}^{X}Y_{2|t_h|}(x)e^{i \alpha x} \frac{dx}{x} + O_{\epsilon}\left( 1+ \frac{T}{C} X^{-2|t_h|-\epsilon} \right) \right) \\
= - 2 \pi \sum_{t_h \in i[0,\theta]} \frac{\sqrt{mn} \cdot \overline{\rho_h(m)}\rho_h(n)}{\cos(\pi |t_h|)} \int_{\frac{X}{2}}^X Y_{2|t_h|}(x)e^{i\alpha x}\frac{dx}{x}  \\
+ O_{\epsilon}\left( \left(1+ \frac{T}{C}X^{-2\theta-\epsilon} \right) \min\left\{ (mn)^{\theta}, 1+\frac{m^{\frac{1}{4}}(m,s)^{\frac{1}{4}}+n^{\frac{1}{4}}(n,s)^{\frac{1}{4}}}{s^{\frac{1}{2}}}+\frac{(mn)^{\frac{1}{4}}(mn,s)^{\frac{1}{4}}}{s}  \right\} \right).
\label{eq:CI4a}
\end{multline}
Combining the minimum of \eqref{eq:CI1a} and \eqref{eq:CI1b}, the minimum of \eqref{eq:CI2a} and \eqref{eq:CI2b}, \eqref{eq:CI3a}, \eqref{eq:CI3b} with \eqref{eq:CI4a} gives \eqref{eq:Maassfull}.\\

Let us now turn our attention to \eqref{eq:Maassfullalphadependent}. This time we split up into the intervals
$$\begin{aligned}
\CI_1 &= \left[0,1\right] ,\\
\CI_2 &= \left[1, \infty \right), \\
\CI_3 &= i\left[ 0, \frac{1}{2} \right].
\end{aligned}$$
By making use of \eqref{eq:trivial} we find that the contribution from $\CI_1$ is bounded by
\begin{equation}
\lesssim \min \left \{ (mn)^{\theta}, 1+\frac{m^{\frac{1}{4}}(m,s)^{\frac{1}{4}}+n^{\frac{1}{4}}(n,s)^{\frac{1}{4}}}{s^{\frac{1}{2}}}+\frac{(mn)^{\frac{1}{4}}(mn,s)^{\frac{1}{4}}}{s} \right \}.
\label{eq:2CI1}
\end{equation}
As before we split up $\CI_2$ into dyadic ranges $\CI_2(l)=[2^l,2^{l+1}]$, $l \ge 0$ and use
\begin{equation*}
\sup_{t \in \CI_2(l)} |\widehat{f}(t)| \lesssim \min \left\{ (1-|\alpha|)^{-\frac{1}{4}}2^{-\frac{3}{2}l}, \frac{C}{T} (1-|\alpha|)^{-\frac{3}{4}}  2^{-\frac{5}{2}l} \right \},
\end{equation*}
which follows from \eqref{eq:nonholo4} and \eqref{eq:nonholo5}. Thus we find that the contribution from $\CI_2$ is bounded by
\begin{equation}\begin{aligned}
 \lesssim & (1-|\alpha|)^{-\frac{1}{2}-\frac{\delta}{2}} \sum_{l \ge 0} \Biggl( \left( \frac{C}{T} \right)^{\frac{1}{2}+\delta} 2^{-\delta l} \\
&+ \min \left\{(mn)^{\frac{\theta}{2}-\theta\delta}\frac{m^{\frac{1}{8}+\frac{\delta}{4}}(m,s)^{\frac{1}{8}+\frac{\delta}{4}}+n^{\frac{1}{8}+\frac{\delta}{4}}(n,s)^{\frac{1}{8}+\frac{\delta}{4}}}{s^{\frac{1}{4}+\frac{\delta}{2}}}2^{-\delta l},\frac{m^{\frac{1}{4}}(m,s)^{\frac{1}{4}}+n^{\frac{1}{4}}(n,s)^{\frac{1}{4}}}{s^{\frac{1}{2}}}2^{-\frac{1}{2}l} \right\} \\ & + \min \left\{(mn)^{\frac{3\theta}{4}-\theta\delta}\frac{(mn)^{\frac{1}{16}+\frac{\delta}{4}}(mn,s)^{\frac{1}{16}+\frac{\delta}{4}}}{s^{\frac{1}{4}+\delta}}2^{-2\delta l},\frac{(mn)^{\frac{1}{4}}(mn,s)^{\frac{1}{4}}}{s}2^{-\frac{3}{2}l} \right\} \Biggr) \\
 \lesssim & (1-|\alpha|)^{-\frac{1}{2}-\epsilon} \Biggl( \left( \frac{C}{T} \right)^{\frac{1}{2}} + \frac{m^{\frac{1}{8}}(m,s)^{\frac{1}{8}}+n^{\frac{1}{8}}(n,s)^{\frac{1}{8}}}{s^{\frac{1}{4}}} \min \left\{(mn)^{\frac{\theta}{2}},\frac{m^{\frac{1}{8}}(m,s)^{\frac{1}{8}}+n^{\frac{1}{8}}(n,s)^{\frac{1}{8}}}{s^{\frac{1}{4}}} \right\} \\ & + \min \left\{\frac{(mn)^{\frac{3\theta}{4}+\frac{1}{16}}(mn,s)^{\frac{1}{16}}}{s^{\frac{1}{4}}},\frac{(mn)^{\frac{1}{4}}(mn,s)^{\frac{1}{4}}}{s} \right\} \Biggr)
\label{eq:2CI2}
\end{aligned}\end{equation}
for $\delta>0$ small enough. The contribution from $\CI_3$ is the same as in \eqref{eq:CI4a}. Combining \eqref{eq:2CI1}, \eqref{eq:2CI2}, and \eqref{eq:CI4a} gives \eqref{eq:Maassfullalphadependent}.

\subsection{Putting things together}
In order to show Theorem \ref{thm:dyadic} we add up all the inequalities \eqref{eq:sharperror}, \eqref{eq:cont}, \eqref{eq:holofull}, \eqref{eq:Maassfull} respectively \eqref{eq:Maassfullalphadependent}, and make the choice $T=O(s^{\frac{2}{3}}C^{\frac{2}{3}})$, which is allowed since $s \ll \min \{(mn)^{\frac{1}{4}}, C^{\frac{1}{2}}\}$. One may note that we have$$
\frac{(m,n)^{\frac{1}{2}}}{\sqrt{C}} \le X^{\frac{1}{2}} \le 1+X
$$
and 
$$
\frac{T}{C}X^{-2\theta} \ll s^{\frac{2}{3}-4 \theta} C^{2\theta - \frac{1}{3}} \cdot s^{4\theta} (mn)^{-\theta} \ll 1.
$$
Theorem \ref{thm:1} follows now at once by estimating the range $c \le (1+|\alpha|^{\frac{2}{3}}) s^{\frac{2}{3}}(mn)^{\frac{1}{3}}$ trivially using the Weil bound, which gives
$$
\sum_{\substack{c \le (1+|\alpha|^{\frac{2}{3}})s^{\frac{2}{3}}(mn)^{\frac{1}{3}} \\ c \equiv 0 \mod(s)}} \frac{1}{c} |S(m,n;c)| \lesssim \frac{\left((1+|\alpha|^{\frac{2}{3}})s^{\frac{2}{3}}(mn)^{\frac{1}{3}}\right)^{\frac{1}{2}}}{s} \lesssim (1+|\alpha|^{\frac{1}{3}}) \frac{(mn)^{\frac{1}{6}}}{s^{\frac{2}{3}}}.
$$
For the remaining range $(1+|\alpha|^{\frac{2}{3}})s^{\frac{2}{3}}(mn)^{\frac{1}{6}} \le c \le C$ we use Theorem \ref{thm:dyadic}. Furthermore note that
$$
\int_1^{\infty} |Y_{2t}(x)| \frac{dx}{x} \ll \int_1^{\infty} x^{-\frac{3}{2}}dx \ll 1
$$
uniformly for $t \le \theta$ and hence we have
\begin{multline*}
\sum_{t_h \in i[0,\theta]} \frac{\sqrt{mn} \cdot \overline{\rho_h(m)}\rho_h(n)}{\cos(\pi |t_h|)} \int_{1}^{\infty} Y_{2|t_h|}(x)e^{i\alpha x}\frac{dx}{x} \\
 \lesssim \min \left \{ (mn)^{\theta}, 1+\frac{m^{\frac{1}{4}}(m,s)^{\frac{1}{4}}+n^{\frac{1}{4}}(n,s)^{\frac{1}{4}}}{s^{\frac{1}{2}}}+\frac{(mn)^{\frac{1}{4}}(mn,s)^{\frac{1}{4}}}{s}  \right \}.
\end{multline*}
This proves Theorem \ref{thm:1}. In order to prove Corollary \ref{cor:2} we need to show
\begin{equation*}
\sum_{t_h \in i[0,\theta]} \frac{\sqrt{mn} \cdot \overline{\rho_h(m)}\rho_h(n)}{\cos(\pi |t_h|)} \int_{X}^{1} Y_{2|t_h|}(x)e^{i\alpha x}\frac{dx}{x} \lesssim C^{2\theta},
\end{equation*}
when $C \ge \sqrt{mn}$. This follows from the two estimates
$$
\int_{X}^{1} |Y_{2t}(x)| \frac{dx}{x} \ll_{\epsilon} \int_{X}^{1} x^{-2\theta -1- \epsilon}dx \ll_{\epsilon} X^{-2\theta - \epsilon}
$$
and
$$
\sum_{t_h \in i[0,\theta]} \frac{\sqrt{mn} \cdot |\rho_h(m)\rho_h(n)|}{\cos(\pi |t_h|)} \ll_{\epsilon} (mn)^{\theta+\epsilon}.
$$
Theorem \ref{thm:2} is proved analogously.

\section{Transform estimates}
\label{sec:est}
In this section we prove the claimed upper bounds in Lemma \ref{lem:upperbnds} on the transforms of $f$. Since all the estimates are very different in nature we split them up into multiple lemmata. We generally follow the arguments of \cite{SarTsim} and \cite{generalkuznetsov}, but tweak them to account for our introduced twist. First we shall need two preliminary lemmata, which will be used frequently.

\begin{lem} \label{lem:intbound} Let $F,G \in C([A,B],\BC)$ with $G$ having a continuous derivative. Then
$$
\left|\int_A^B F(x)G(x) dx \right| \ll \left( \| G \|_{\infty}+\|G' \|_{1} \right) \sup_{C \in [A,B]} \left| \int_A^C F(x)dx \right |.
$$
\end{lem}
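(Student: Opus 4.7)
The plan is to prove this by a straightforward integration by parts (Abel-type summation for integrals). Define the antiderivative
$$\Phi(x) = \int_A^x F(t)\, dt,$$
so that $\Phi(A)=0$, $\Phi$ is continuous on $[A,B]$, and $\Phi'(x) = F(x)$ by the fundamental theorem of calculus (using continuity of $F$). Let $M = \sup_{C \in [A,B]} |\Phi(C)|$, which is the quantity appearing on the right-hand side of the claimed bound.

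Integration by parts, valid since $G$ has a continuous derivative on $[A,B]$, yields
$$\int_A^B F(x) G(x)\, dx = \Phi(B) G(B) - \int_A^B \Phi(x) G'(x)\, dx.$$
The first term is bounded by $\|G\|_\infty \cdot |\Phi(B)| \le \|G\|_\infty \cdot M$, while the second is estimated by pulling out the supremum of $|\Phi|$:
$$\left| \int_A^B \Phi(x) G'(x)\, dx \right| \le M \int_A^B |G'(x)|\, dx = M \|G'\|_1.$$
Combining the two bounds via the triangle inequality gives
$$\left| \int_A^B F(x) G(x)\, dx \right| \le \bigl( \|G\|_\infty + \|G'\|_1 \bigr) M,$$
which is the desired estimate.

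There is essentially no obstacle here: this is a textbook one-line application of integration by parts. The only thing to be mindful of is that the statement is purely real-analytic and makes no use of any arithmetic structure — it is a utility lemma which will be applied in the subsequent transform estimates (e.g.\ to the Bessel transforms of $f$, with $F$ containing the oscillatory part $J_t$ or $e^{i\alpha x}$ and $G$ carrying the smooth weight together with slowly varying factors).
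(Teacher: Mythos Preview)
Your proof is correct and follows exactly the same approach as the paper: integration by parts with the antiderivative $\Phi(x)=\int_A^x F$, then bounding the boundary term by $\|G\|_\infty M$ and the integral term by $\|G'\|_1 M$. The paper's version is just terser, writing the identity and declaring the bound ``trivially deduced.''
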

\begin{proof} We integrate by parts and find
$$
\int_A^B F(x)G(x) dx = \int_A^B F(x) dx \cdot G(B) - \int_A^B \int_A^y F(x)dx \cdot G'(y) dy,
$$
from which the first statement is trivially deduced.
\end{proof}

\begin{lem}
Let $G,H \in C^1([A,B],\BC)$ and assume $G$ has a zero and $H'$ has at most $K$ zeros. Then we have
$$
\| GH \|_{\infty}+\|(GH)'\|_{1} \ll_K \|G'\|_{1} \|H\|_{\infty}.
$$
\label{lem:intbound2}
\end{lem}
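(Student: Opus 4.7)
The plan is to reduce both quantities on the left to $\|G'\|_1\|H\|_\infty$ by exploiting the two hypotheses separately: the zero of $G$ controls $\|G\|_\infty$, and the bound on the number of zeros of $H'$ controls $\|H'\|_1$.

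First I would observe that since $G$ has a zero, say at $x_0\in[A,B]$, the fundamental theorem of calculus gives
\[
G(x)=\int_{x_0}^x G'(t)\,dt,\qquad |G(x)|\le \|G'\|_1,
\]
so $\|G\|_\infty\le\|G'\|_1$. In particular, $\|GH\|_\infty\le\|G\|_\infty\|H\|_\infty\le\|G'\|_1\|H\|_\infty$, which already handles the first summand.

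Next I would handle $\|(GH)'\|_1$ via the product rule:
\[
\|(GH)'\|_1\le \|G'H\|_1+\|GH'\|_1\le \|G'\|_1\|H\|_\infty+\|G\|_\infty\|H'\|_1.
\]
The first term is already of the required form, and by the previous step the second is bounded by $\|G'\|_1\|H'\|_1$. The missing ingredient is therefore an estimate of $\|H'\|_1$ in terms of $\|H\|_\infty$, and this is where the hypothesis on zeros of $H'$ enters.

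Since $H'$ has at most $K$ zeros in $[A,B]$, these zeros partition $[A,B]$ into at most $K+1$ subintervals on each of which $H'$ has constant sign, i.e.\ $H$ is monotonic. On such a subinterval $[a,b]$ one has
\[
\int_a^b|H'(t)|\,dt=\left|\int_a^b H'(t)\,dt\right|=|H(b)-H(a)|\le 2\|H\|_\infty,
\]
so summing over the at most $K+1$ pieces yields $\|H'\|_1\le 2(K+1)\|H\|_\infty$. Combining this with the earlier bounds gives
\[
\|GH\|_\infty+\|(GH)'\|_1\le\bigl(2+2(K+1)\bigr)\|G'\|_1\|H\|_\infty,
\]
which is the claimed inequality. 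There is no real obstacle here—the only subtlety is recognising that "at most $K$ zeros of $H'$" is precisely the information needed to convert a total-variation bound on $H$ into an $L^1$-bound on $H'$ via monotonicity on each piece.
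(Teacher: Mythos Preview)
Your proof is correct and follows essentially the same route as the paper: bound $\|G\|_\infty$ by $\|G'\|_1$ via the zero of $G$, expand $(GH)'$ by the product rule, and control $\|H'\|_1$ by $2(K+1)\|H\|_\infty$ using the partition of $[A,B]$ into at most $K+1$ intervals determined by the zeros of $H'$. The paper's argument is line-for-line the same, including the final constant.
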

\begin{proof} We have $\| GH \|_{\infty} \le \| G \|_{\infty}\|H \|_{\infty}$ and $\|G\|_{\infty} \le \|G'\|_{1}$ since we have $G(b)= \int_a^b G'(x) dx$, where $a$ is a zero of $G$. Furthermore we have
$$
\|(GH)'\|_{1} \le \|G'H\|_1+\|GH'\|_1 \le \|G'\|_1 \|H\|_{\infty}+\|G\|_{\infty}\|H'\|_1 \le \|G'\|_1(\|H\|_{\infty}+\|H'\|_1)
$$
and
$$
\|H'\|_1 \le 2 (K+1) \|H\|_{\infty}
$$
by splitting up the integral into intervals on which $H'$ has a constant sign.
\end{proof}

\begin{lem} Let $f$ be as in the beginning of Section \ref{sec:proof} and $|\alpha|\le 1$ then we have
\begin{align*}
\widetilde{f}(t) \ll \frac{1+|\log(X)|}{1+X^{\frac{1}{2}}+|1-|\alpha|^2|^{\frac{1}{2}}X}, \quad \forall t \in \BR.
\end{align*}
\end{lem}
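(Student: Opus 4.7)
My plan is to combine the crude pointwise bound on $J_t$ with the Hankel asymptotic of $J_t$ for large argument, and then integrate by parts against the oscillatory factor $e^{i\alpha y}$. Since $g$ is supported in $\supp g\subseteq[X/3,2X]$, all integrals reduce to this interval and have $\|g'\|_1\ll 1$, $\|g\|_\infty=1$ at disposal.

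First I would establish the purely trivial piece $|\widetilde f(t)|\ll(1+X^{1/2})^{-1}$. Inserting the uniform bound $|J_t(y)|\ll \min(1,y^{-1/2})$ into the defining integral and integrating against $dy/y$ on $[X/3,2X]$ gives $O(1)$ when $X\le 1$ and $O(X^{-1/2})$ when $X\ge 1$. This already accounts for the first two terms $1$ and $X^{1/2}$ in the denominator of the claim, so I may from now on assume $|1-|\alpha|^2|^{1/2}X\gg 1+X^{1/2}$.

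Next, for the extra saving $|1-|\alpha|^2|^{1/2}X$ in the denominator, in the range $1+t^2\ll X$ I would insert the Hankel asymptotic
\[
J_t(y)=\sqrt{\frac{2}{\pi y}}\cos\!\Bigl(y-\tfrac{\pi t}{2}-\tfrac{\pi}{4}\Bigr)+O\!\Bigl(\tfrac{1+t^2}{y^{3/2}}\Bigr),
\]
whose error contributes $O((1+t^2)X^{-3/2})\ll X^{-1/2}$ and is therefore subsumed. After expanding the cosine into two complex exponentials, the main term produces two oscillatory integrals
\[
\int_{X/3}^{2X} g(y)\,y^{-3/2}\,e^{\pm i(1\pm\alpha)y}\,dy.
\]
I would apply Lemma \ref{lem:intbound} to each with $F(y)=e^{\pm i(1\pm\alpha)y}$ and $G(y)=g(y)y^{-3/2}$, noting that the running antiderivative of $F$ is bounded by $\min(X,2/|1\pm\alpha|)$ while $\|G\|_\infty+\|G'\|_1\ll X^{-3/2}$ from the hypotheses on $g$. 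Each piece is then bounded by $X^{-1/2}(1+|1\pm\alpha|X)^{-1}$. Summing the two sign choices and using that $\max(|1-\alpha|,|1+\alpha|)\ge 1$ together with $|1-\alpha||1+\alpha|=|1-|\alpha|^2|$, one obtains the target $(X^{1/2}+|1-|\alpha|^2|^{1/2}X)^{-1}$.

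The main obstacle is the regime $t^2\gtrsim X$, where the Hankel asymptotic ceases to be sharp. When $t^2\gg X$, the support of $g$ lies to the left of the Bessel turning point $y=t$, so $J_t(y)$ decays exponentially on $\supp g$ and the bound is trivial. In the transition regime $t^2\asymp X$ I would either invoke the uniform Debye asymptotic (with amplitude $(y^2-t^2)^{-1/4}$, degenerating to Airy near $y=t$), or absorb the mild loss into the factor $1+|\log X|$ in the numerator, which is otherwise unnecessary here but is retained for consistency with the more general bound \eqref{eq:trivial}.
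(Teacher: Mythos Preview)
The paper takes a different and cleaner route: it inserts the Bessel integral representation $J_t(x)=\frac{1}{2\pi}\int_0^{2\pi}e^{i(x\sin\xi-t\xi)}\,d\xi$ and swaps the order of integration, so that the $t$-dependence sits entirely in the unimodular factor $e^{-it\xi}$ and drops out upon taking absolute values. One integration by parts in $x$ then gives $\int_0^\infty e^{ix(\sin\xi+\alpha)}g(x)\,\tfrac{dx}{x}\ll\min\{1,X^{-1}|\sin\xi+\alpha|^{-1}\}$, and the remaining $\xi$-integral is handled by elementary analysis near the point $\sin\xi=-\alpha$ (writing $\alpha=\sin\phi$ and using $|\sin\xi-\sin\phi|\asymp|\xi-\phi||\pi-\xi-\phi|$). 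This produces the full bound uniformly in $t$ with no case split on $t$ whatsoever.

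Your approach has a concrete gap. The turning point of $J_t$ is at $y=t$, not at $y=t^2$, so the condition ``$t^2\gg X$'' does \emph{not} place $\supp g\subseteq[X/3,2X]$ to the left of it; for that you need $t\gg X$. Consequently the entire range $X^{1/2}\ll t\lesssim X$ is uncovered: the Hankel remainder $O((1+t^2)y^{-3/2})$ integrates to $t^2X^{-3/2}$, which can be as large as $X^{1/2}$; the support lies in the oscillatory region (so there is no exponential decay); and your ``transition regime $t^2\asymp X$'' is only the lower boundary of this gap. Relatedly, the opening pointwise bound $|J_t(y)|\ll\min(1,y^{-1/2})$ is not uniform in $t$---near $y=t$ one has $J_t(y)\asymp t^{-1/3}\gg y^{-1/2}$---so even the ``purely trivial piece'' $(1+X^{1/2})^{-1}$ is unjustified for $t\asymp X$. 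To repair the range $X^{1/2}\ll t\lesssim X$ you would need to run the Debye/Langer expansion with phase derivative $\pm\sqrt{1-(t/y)^2}$ and amplitude $(y^2-t^2)^{-1/4}$ and redo the non-stationary-phase bookkeeping against $e^{i\alpha y}$; this is essentially the analysis the paper carries out elsewhere for \eqref{eq:holosplit4}. The integral-representation trick bypasses all of it.
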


\begin{proof} We follow the proof of Lemma 7.1 in \cite{generalkuznetsov} and Proposition 5 in \cite{SarTsim}. To prove the first statement we use the Bessel representation
$$
J_t(x)= \frac{1}{2\pi} \int_0^{2\pi} e^{i(x\sin \xi-t\xi)} d\xi.
$$
Integration by parts yields
$$\begin{aligned}
\int_0^{\infty}e^{i x \sin \xi}\frac{f(x)}{x}dx&=\int_0^{\infty}e^{i x (\sin \xi+\alpha)}\frac{g(x)}{x}dx \\
&= \frac{i}{\sin\xi+\alpha} \int_0^{\infty} e^{i x (\sin\xi+\alpha)} \left( \frac{g(x)}{x} \right)' dx \\
&\ll \min \left\{  1, X^{-1}|\sin\xi+\alpha|^{-1}  \right \}.
\end{aligned}$$
Thus we find
$$\begin{aligned}
\widetilde{f}(t) & \ll \int_0^{2\pi} \min \left\{ 1, X^{-1}|\sin\xi+\alpha|^{-1} \right\}d\xi 
\end{aligned}$$
Now clearly $\widetilde{f}(t) \ll 1$. For $X \ge 1 $ we can do better though. We have $|\sin \xi+ \alpha| \ge ||\sin \xi|-|\alpha||$, thus we may assume $\xi \in [0,\frac{\pi}{2}]$ and $\alpha \ge 0$. Set $\alpha = \sin \phi$ with $\phi \in [0, \frac{\pi}{2}]$. Then we have
$$
\sin \xi - \alpha = 2 \sin \left( \frac{\xi-\phi}{2} \right) \sin\left( \frac{\pi-\xi-\phi}{2} \right).
$$
Now for $x \in [-\frac{\pi}{2},\frac{\pi}{2}]$ we have $|\sin(x)| \asymp |x|$, thus
$$\begin{aligned}
\widetilde{f}(t) & \ll \int_0^{\frac{\pi}{2}} \min \left\{ 1, X^{-1}|\xi-\phi|^{-1}|\pi-\xi-\phi|^{-1} \right\}d\xi \\ 
& \ll \int_0^{\frac{\pi}{2}} \min \left\{ 1, X^{-1}|\xi-\phi|^{-1}|\tfrac{\pi}{2}-\phi|^{-1},X^{-1}|\xi-\phi|^{-2} \right\}d\xi \\
& \ll \min \left\{  \frac{1+\log(X)}{|\tfrac{\pi}{2}-\phi|X}, X^{-\frac{1}{2}}    \right \}.
\end{aligned}$$
Now we just have to note that $\frac{\pi}{2}-\phi  \asymp \sin(\frac{\pi}{2}-\phi)=\sqrt{1-|\alpha|^2}$.
\end{proof}

\begin{lem} Let $f$ be as in the beginning of Section \ref{sec:proof} and $|\alpha|\ge 1$ then we have
\begin{align*}
\widetilde{f}(t) \ll \frac{1+|\log(X)|}{1+X^{\frac{1}{2}}+||\alpha|^2-1|^{\frac{1}{2}}X}, \quad \forall t \in \BR.
\end{align*}
\end{lem}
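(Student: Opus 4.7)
\medskip

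\noindent\textbf{Proof plan.} I would follow the structure of the immediately preceding lemma essentially verbatim. Substituting the Bessel representation $J_t(x)=\frac{1}{2\pi}\int_0^{2\pi}e^{i(x\sin\xi-t\xi)}d\xi$ into $\widetilde{f}(t)$, swapping the order of integration, and integrating by parts once in $x$ (using that $\|g'/x\|_1\ll X^{-1}$) gives
$$
\int_0^\infty e^{ix(\sin\xi+\alpha)}\frac{g(x)}{x}\,dx\ \ll\ \min\bigl\{1,\ X^{-1}|\sin\xi+\alpha|^{-1}\bigr\}.
$$
The complex-conjugation symmetry $|\widetilde{f}(t)|=|\widetilde{\,\overline{f}\,}(t)|$ (coming from $J_t\in\mathbb{R}$) lets me assume $\alpha\ge 1$, in which case $\sin\xi+\alpha$ is non-negative and bounded below by $\alpha-1\ge 0$, attained uniquely at $\xi=\tfrac{3\pi}{2}$. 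The trivial bound $\widetilde{f}(t)\ll 1$ already covers $X\le 1$, so the remaining task is to show that for $X\ge 1$
$$
\int_0^{2\pi}\min\bigl\{1,\ X^{-1}|\sin\xi+\alpha|^{-1}\bigr\}\,d\xi\ \ll\ \frac{1}{X^{1/2}+||\alpha|^2-1|^{1/2}X}.
$$

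\smallskip

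\noindent To estimate this integral I would change variables $\xi=\tfrac{3\pi}{2}+\eta$, use the identity $\sin\xi+\alpha=(\alpha-1)+2\sin^2(\eta/2)$ together with $\sin^2(\eta/2)\asymp\eta^2$ for $|\eta|\le\pi/2$, and handle the remaining range $|\eta|\in[\pi/2,\pi]$ trivially since there $\sin\xi+\alpha\asymp\alpha+1\ge 2$. Writing $\delta=\alpha-1\ge 0$, the core quantity is therefore
$$
\int_{-\pi}^{\pi}\min\bigl\{1,\ X^{-1}(\delta+\eta^2)^{-1}\bigr\}\,d\eta,
$$
which I would bound by splitting into two regimes: if $X\delta\le 1$ one drops the $\delta$ to dominate the integrand by $\min\{1,X^{-1}\eta^{-2}\}$ and obtains $\ll X^{-1/2}$; if $X\delta\ge 1$ one discards the $1$ in the minimum and computes $X^{-1}\int_{-\pi}^{\pi}(\delta+\eta^2)^{-1}d\eta=2X^{-1}\delta^{-1/2}\arctan(\pi/\sqrt{\delta})$, which is $\asymp X^{-1}\delta^{-1/2}$ for $\delta\le 1$ and $\asymp X^{-1}\delta^{-1}$ for $\delta\ge 1$. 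Since $\sqrt{|\alpha|^2-1}=\sqrt{\delta(\delta+2)}\asymp\sqrt{\delta}$ for $\delta\le 1$ and $\asymp\delta$ for $\delta\ge 1$, both cases combine to $\ll(X^{1/2}+||\alpha|^2-1|^{1/2}X)^{-1}$, which is strictly stronger than what is claimed.

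\smallskip

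\noindent The only real obstacle is book-keeping: one must see that the correct denominator is $X^{1/2}+||\alpha|^2-1|^{1/2}X$ rather than the naive $X^{1/2}+(\alpha-1)^{1/2}X$ one would write down from the leading-order Taylor expansion near $\xi=3\pi/2$. This is precisely the content of the $\arctan$ factor above, which decays like $1/\sqrt{\delta}$ for large $\delta$ and thereby promotes the estimate from $X^{-1}\delta^{-1/2}$ to $X^{-1}\delta^{-1}$ in the large-$\alpha$ range. In contrast to the preceding lemma there is no factorisation $\sin\xi-\sin\phi=2\sin(\tfrac{\xi-\phi}{2})\cos(\tfrac{\xi+\phi}{2})$ with two separate real zeros, so no logarithmic loss arises in the direct estimate; the factor $1+|\log X|$ in the numerator of the stated bound is then simply inherited for uniformity with the $|\alpha|\le 1$ case and causes no difficulty.
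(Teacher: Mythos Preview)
Your proposal is correct and follows the same overall strategy as the paper: insert the Bessel integral representation, integrate by parts in $x$, and reduce to estimating $\int_0^{2\pi}\min\{1,X^{-1}|\sin\xi+\alpha|^{-1}\}\,d\xi$. The paper also uses $|\alpha|-\sin\xi\ge(|\alpha|-1)+c(\tfrac{\pi}{2}-\xi)^2$, which is the same quadratic lower bound you obtain via $\sin\xi+\alpha=(\alpha-1)+2\sin^2(\eta/2)$ after the shift $\xi=\tfrac{3\pi}{2}+\eta$.

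The one genuine difference is in how the resulting integral $\int\min\{1,X^{-1}(\delta+u^2)^{-1}\}\,du$ is handled. The paper bounds the integrand pointwise by $\min\{1,X^{-1}\delta^{-1},X^{-1}\delta^{-1/2}u^{-1},X^{-1}u^{-2}\}$ and integrates term by term, which produces the extra factor $1+\log X$ appearing in the statement; you instead compute the integral exactly via $\arctan$, cleanly separating the regimes $\delta\lesssim 1$ and $\delta\gtrsim 1$ and thereby avoiding any logarithmic loss. Your observation that the $1+|\log X|$ in the numerator is not actually needed here (and is only retained for uniformity with the $|\alpha|\le 1$ lemma) is correct and slightly sharper than what the paper proves.
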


\begin{proof} As before we find $\widetilde{f}(t) \ll 1$ and for $X \ge 1$ we have
$$\begin{aligned}
\widetilde{f}(t) & \ll \int_0^{\frac{\pi}{2}} \min \left\{ 1, X^{-1}(|\alpha|-|\sin \xi|)^{-1} \right\}d\xi \\
& \ll \int_0^{\frac{\pi}{2}} \min \left\{ 1, X^{-1}(|\alpha|-1+\tfrac{1}{\pi}(\tfrac{\pi}{2}-\xi)^2)^{-1} \right\}d\xi \\
& \ll \int_0^{\frac{\pi}{2}} \min \left\{ 1, X^{-1}(|\alpha|-1)^{-1}, X^{-1}(|\alpha|-1)^{-\frac{1}{2}}(\tfrac{\pi}{2}-\xi)^{-1} ,  X^{-1}(\tfrac{\pi}{2}-\xi)^2 \right\}d\xi\\
& \ll \min\left \{  \frac{1}{||\alpha|-1|X}, \frac{1+\log(X)}{||\alpha|-1|^{\frac{1}{2}}X} ,X^{-\frac{1}{2}}   \right \}.
\end{aligned}$$

\end{proof}

We also require some more refined estimates. For this we consider the different regions of the $J$-Bessel function.

\begin{lem} Let $f$ as in the beginning of Section \ref{sec:proof} and $|\alpha|\le 1$. Then we have for $t\ge 8$
\begin{align}
\int_0^{\frac{t}{2}} J_t(y) f(y) \frac{dy}{y} &\ll \one_{[2X/3, \infty)}(t) \cdot t^{-\frac{1}{2}} e^{-\frac{2}{5}t}, \nonumber \\
\int_{\frac{t}{2}}^{t-t^{\frac{1}{3}}} J_t(y)f(y) \frac{dy}{y} &\ll
\one_{[\frac{1}{4},\infty)}(X)\one_{[X/3,4X]}(t) \cdot t^{-1}(\log(t))^{\frac{2}{3}}, \nonumber \\
\int_{t-t^{\frac{1}{3}}}^{t+t^{\frac{1}{3}}} J_t(y) f(y) \frac{dy}{y} &\ll \one_{[\frac{1}{4},\infty)}(X)\one_{[3X/16,3X]}(t) \cdot t^{-1}, \nonumber \\ 
\int_{t+t^{\frac{1}{3}}}^{\infty} J_t(y) f(y) \frac{dy}{y} &\ll  \one_{[\frac{1}{4},\infty)}(X)\one_{[0,3X/2]}(t) \cdot t^{-1} \min \left \{ |1-|\alpha||^{-\frac{1}{4}}, \left(  \frac{X}{t} \right)^{\frac{1}{2}} \right \}, \label{eq:holosplitsmall4}
\end{align}
where $\one_{\CI}$ is the characteristic function of the interval $\CI$.
\end{lem}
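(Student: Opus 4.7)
The proof treats each of the four ranges separately, in each case choosing an appropriate description of $J_t(y)$: the small-argument or Debye majorant for $y$ small, Debye exponential decay for $y \in [t/2, t-t^{1/3}]$, the Airy transition bound near the turning point, and the oscillatory Debye expansion for $y > t$. Throughout, the slowly varying factors $g(y)/y$ are handled via Lemmas \ref{lem:intbound} and \ref{lem:intbound2}, and the assumption $|\alpha| \le 1$ is used to control the twist $e^{i\alpha y}$ in a uniform way.

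For the first and third ranges, a direct estimate suffices. For $y \in [0, t/2]$, I would invoke the bound $|J_t(y)| \ll (ey/(2t))^t/\sqrt{t}$ (or, more sharply, the Debye exponential form which at $y=t/2$ yields decay $e^{-(\operatorname{arccosh} 2 - \sqrt{3}/2)\, t} \ll e^{-2t/5}$); the integration region is non-empty only if $t \ge 2X/3$ since $\supp f \subseteq [X/3, 2X]$, and on this region the exponentially decaying amplitude absorbs the logarithmic factor coming from $\int dy/y$, producing $t^{-1/2} e^{-2t/5}$. For the Airy transition range $[t-t^{1/3}, t+t^{1/3}]$, the uniform bound $|J_t(y)| \ll t^{-1/3}$ combined with $y \asymp t$ and interval length $2t^{1/3}$ yields $O(t^{-1})$ directly; the indicator constraints on $t$ follow from intersection with $\supp f$.

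For the second range $y \in [t/2, t-t^{1/3}]$, I substitute the Debye asymptotic
$$
J_t(y) \sim \frac{e^{-t\psi(y/t)}}{\sqrt{2\pi}\,(t^2-y^2)^{1/4}}, \qquad \psi(v) = \operatorname{arccosh}(1/v) - \sqrt{1-v^2},
$$
and change variable $y = t\operatorname{sech}\beta$, so that $\beta$ ranges in $[c_1 t^{-1/3}, \operatorname{arccosh}(2)]$ and $t\psi \asymp t\beta^3$ near the turning point. The bulk of the integral comes from $\beta \sim t^{-1/3}$, and after this substitution the $\beta$-integral is of Laplace type with Gaussian-like width $t^{-1/3}$; combining with Lemmas \ref{lem:intbound} and \ref{lem:intbound2} on the slowly varying amplitude and the twist $e^{i\alpha y}$ yields the claimed $t^{-1}(\log t)^{2/3}$, where the logarithmic factor absorbs the cross-over between the exponentially decaying bulk and the edge of the Debye regime.

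For the fourth range $y \in [t+t^{1/3}, \infty)$, I plug in the oscillatory Debye expansion
$$
J_t(y) = \sqrt{\tfrac{2}{\pi}}\,\frac{\cos\phi(y)}{(y^2-t^2)^{1/4}} + (\text{error}), \qquad \phi(y) = \sqrt{y^2-t^2} - t\arccos(t/y) - \tfrac{\pi}{4},
$$
so $\phi'(y) = \sqrt{y^2-t^2}/y$. Splitting $\cos\phi$ into exponentials, the total phase becomes $\pm\phi(y) + \alpha y$, with derivative $\pm\sqrt{y^2-t^2}/y + \alpha$ vanishing at most once, at $y^* = t/\sqrt{1-\alpha^2}$ (for $|\alpha| \le 1$), where $\phi''(y^*) = (1-\alpha^2)^{3/2}/(|\alpha|\,t)$. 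Stationary phase at $y^*$ gives a leading contribution of size $t^{-1}$, and a careful analysis of the amplitude $(y^{*2}-t^2)^{-1/4}/y^*$ near $y^*$ together with the width $|\phi''(y^*)|^{-1/2}$ produces the factor $|1-|\alpha||^{-1/4}$; alternatively, integration by parts via Lemma \ref{lem:intbound}, using the lower bound $|\phi'\pm\alpha| \gg \sqrt{y^2-t^2}/y$ valid away from $y^*$, yields the bound $t^{-1}(X/t)^{1/2}$. The main obstacle is precisely this fourth range: one must case-split on whether $y^*$ lies inside $\supp g = [X/3, 2X]$ and on the proximity of $|\alpha|$ to $1$, and then interpolate the two methods so that the single bound $t^{-1}\min\{|1-|\alpha||^{-1/4}, (X/t)^{1/2}\}$ holds in all regimes; the Debye error terms must be shown uniformly smaller than each of these.
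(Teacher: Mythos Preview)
Your outline is correct and follows essentially the same route as the paper: Debye/Langer asymptotics for $J_t$ in each regime, direct pointwise estimation in the first three ranges, and stationary-phase/integration-by-parts analysis of the oscillatory phase $\pm\omega(y)+\alpha y$ in the fourth. The one place where your sketch is looser than the paper is the second range: the factor $(\log t)^{2/3}$ does not emerge from a Laplace width of $t^{-1/3}$ but from the explicit threshold split at $y = t - c\,t^{1/3}(\log t)^{2/3}$ (equivalently $z=\log t$ in Langer's variable), below which $J_t(y)\ll t^{-4/3}$ and above which $J_t(y)\ll t^{-1/3}$ over an interval of length $\asymp t^{1/3}(\log t)^{2/3}$.
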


\begin{proof}
We require some uniform estimates on the $J$-Bessel functions of real order.
For small argument we have exponential decay
\begin{equation}
0\le J_{t}(x) \le \frac{e^{-t F(0,x/t)}}{\left(1-(x/t)^2\right)^{\frac{1}{4}}\sqrt{2\pi t}}, \quad \forall x < t,
\label{eq:Jexpdecay}
\end{equation}
where $F(0,x)=\log \left( \frac{1+\sqrt{1-x^2}}{x} \right)-\sqrt{1-x^2}$. The left hand side follows from the fact that the first zero of the Bessel function of order $t$ is $>t$ and the right hand side follows from \cite[pp. 252-255]{ToBF}. We will also make use of Langer's formulas see \cite[pp. 30,89]{HTF}. The first formula is
\begin{equation}
J_{t}(x) = w^{-\frac{1}{2}}(w-\arctan(x))^{\frac{1}{2}} \left( \frac{\sqrt{3}}{2} J_{\frac{1}{3}}(z)-\frac{1}{2}Y_{\frac{1}{3}}(z) \right) + O(t^{-\frac{4}{3}}), \quad \forall x>t,
\label{eq:Langersmall}
\end{equation}
where
$$
w= \sqrt{\frac{x^2}{t^2}-1} \text{ and } z= t (w-\arctan(w)).
$$
The second one is
\begin{equation}
J_{t}(x) = \frac{1}{\pi} w^{-\frac{1}{2}}(\artanh(w)-w)^{\frac{1}{2}}K_{\frac{1}{3}}(z)+O(t^{-\frac{4}{3}}), \quad \forall x < t,
\label{eq:Langerlarge}
\end{equation}
where
$$
w=\sqrt{1-\frac{x^2}{t^2}} \text{ and } z= t(\artanh(w)-w).
$$
And finally for the transitional range $|x-t|\le t^{\frac{1}{3}}$ we have
\begin{equation}
J_{t}(x) \ll t^{-\frac{1}{3}},
\label{eq:Jtrans}
\end{equation}
by \cite[pp. 244-247]{ToBF}.

The first inequality follows directly from \eqref{eq:Jexpdecay}
$$
\int_0^{\frac{t}{2}} J_t(y) f(y) \frac{dy}{y} \ll t^{-\frac{1}{2}}e^{-\frac{2}{5}t} \cdot \frac{X}{X}.
$$
Note that if $X\le \frac{1}{2}$, then this covers everything, thus we may assume $X \ge \frac{1}{2}$ from now on. For the range $[\frac{t}{2},t-t^{\frac{1}{3}}]$ we use \eqref{eq:Langerlarge} and $z^{\frac{1}{2}}K_{\frac{1}{3}}(z) \ll e^{-z}, \ \forall z \ge 0$. Thus we find
$$
J_t(y) \ll (t^2-y^2)^{-\frac{1}{4}}e^{-z}+O(t^{-\frac{4}{3}}).
$$
Now if $y \le \min\{ t-9t^{\frac{1}{3}}(\log t)^{\frac{2}{3}}, t-t^{\frac{1}{3}}\}$ we have $z \ge \log t$ and thus $J_t(y) \ll t^{-\frac{4}{3}}$ otherwise we have $J_t(y) \ll t^{-\frac{1}{3}}$. We conclude
$$
\int_{\frac{t}{2}}^{t-t^{\frac{1}{3}}} J_t(y)f(y) \frac{dy}{y} \ll
t^{-\frac{4}{3}} \cdot \frac{X}{X}+t^{-\frac{1}{3}} \cdot \frac{t^{\frac{1}{3}}(\log(t))^{\frac{2}{3}}}{t}.
$$
For the range $t-t^{\frac{1}{3}} \le y \le t+t^{\frac{1}{3}}$ we use \eqref{eq:Jtrans} and get
$$
\int_{t-t^{\frac{1}{3}}}^{t+t^{\frac{1}{3}}} J_t(y) f(y) \frac{dy}{y} \ll t^{-\frac{1}{3}} \cdot \frac{t^{\frac{1}{3}}}{t}.
$$
We are left to deal with the range $t+t^{\frac{1}{3}} \le y$. We make a change of variable $y \to ty$ and we are left to estimate
\begin{equation}
\int_{1+t^{-\frac{2}{3}}}^{\infty} J_t(ty)e^{i\alpha ty} g(ty) \frac{dy}{y}.
\label{eq:left1}
\end{equation}
We make use of \eqref{eq:Langersmall} and find $z \gg 1$ in this range of $y$. By making use of Langer's formula \eqref{eq:Langersmall} we introduce an error of the size
$$
\ll t^{-\frac{4}{3}} \cdot \frac{X}{X},
$$
which is sufficient. Since $z \gg 1$ we are able to make use of the classical estimates
\begin{equation}\begin{aligned}
J_{\frac{1}{3}}(z) &= \sqrt{\frac{2}{\pi z}} \left( \cos(z-\tfrac{\pi}{6}-\tfrac{\pi}{4})+O(z^{-1}) \right), \\
Y_{\frac{1}{3}}(z) &= \sqrt{\frac{2}{\pi z}} \left( \sin(z-\tfrac{\pi}{6}-\tfrac{\pi}{4})+O(z^{-1}) \right).
\end{aligned}
\label{eq:classical}
\end{equation}
Inserting \eqref{eq:classical} into \eqref{eq:left1} introduces another error of the size
$$
t^{-\frac{1}{2}}\int_{1+t^{-\frac{2}{3}}}^{\infty} w^{-\frac{1}{2}} z^{-1} g(ty) \frac{dy}{y}, 
$$
where $w=\sqrt{y^2-1}$ and $z=t(w-\arctan(w))$. We have $z \gg t \min \{w^3,w\}$ and thus we are able to estimate the above as
$$\begin{aligned}
&\ll t^{-\frac{3}{2}} \int_{1+t^{-\frac{2}{3}}}^2 \frac{g(ty)}{(y^2-1)^{\frac{7}{4}}y}dy+t^{-\frac{3}{2}} \int_2^{\infty} \frac{g(ty)}{(y^2-1)^{\frac{3}{4}}y}dy \\
& \ll t^{-\frac{3}{2}} \int_{1+t^{-\frac{2}{3}}}^2 \frac{g(ty)y}{(y^2-1)^{\frac{7}{4}}}dy+t^{-\frac{3}{2}} \int_2^{\infty} \frac{g(ty)}{y^{\frac{5}{2}}}dy \\
& \ll \| g' \|_1 \cdot t^{-1} + t^{-\frac{3}{2}},
\end{aligned}$$
where we have made use of Lemmata \ref{lem:intbound} and \ref{lem:intbound2} with $F(y)=y(y^2-1)^{-\frac{7}{4}}$ and $G(y)=g(ty)$ respectively $F(y)=y^{-\frac{5}{2}}$ and $G(y)=g(ty)$. This is again sufficient. For the main term we have to consider
\begin{equation}
t^{-\frac{1}{2}} \int_{1+t^{-\frac{2}{3}}}^{\infty} e^{it(\pm\omega(y)+\alpha y)} \frac{g(ty)}{(y^2-1)^{\frac{1}{4}}y}dy,
\label{eq:holomain}
\end{equation}
where
$$\begin{aligned}
\omega(y) &=\sqrt{y^2-1}-\arctan{\sqrt{y^2-1}}, \\
\omega'(y) &= \frac{\sqrt{y^2-1}}{y}.
\end{aligned}$$
We would like to integrate $t(\pm\omega'(y)+\alpha)e^{it(\pm \omega(y)+\alpha y)}$ by parts, but for the sign `$-\sign(\alpha)$' and $y_0=(1-\alpha^2)^{-\frac{1}{2}}$ we have $\omega'(y_0)=|\alpha|$ and we pick up a stationary phase. Let us first assume $\alpha$ is close to $0$, such that $y_0<1+t^{-\frac{2}{3}}$. For $|\alpha| \ll t^{-\frac{1}{3}}$ or the sign `$\sign(\alpha)$' we have $|\pm\omega'(1+t^{-\frac{2}{3}})+\alpha| \gg t^{-\frac{1}{3}}$ and we get by means of Lemmata \ref{lem:intbound} and \ref{lem:intbound2} with $F(y)=(\pm\omega'(y)+\alpha)e^{it(\pm \omega(y)+\alpha y)}, G(y)=g(ty)$ and $H(y)=[(\pm\omega'(y)+\alpha) (y^2-1)^{\frac{1}{4}}y ]^{-1} $ a satisfying contribution of $t^{-1}$.
So from now on we can assume $\alpha >0$, $\alpha \ge k t^{-\frac{1}{3}}$, for some small constant $k$, and the sign being `$-$'. We treat first the case where $\alpha<1$, where we make use of a Taylor expansion around $y_0$. We split up the integral \eqref{eq:holomain} into three parts $\CI_1,\CI_2,\CI_3$ corresponding to the intervals $[1+t^{-\frac{2}{3}},y_0-A],[y_0-A,y_0+A],[y_0+A,\infty]$ respectively. For $\CI_1$ and $\CI_3$ we again make use of Lemmata \ref{lem:intbound} and \ref{lem:intbound2} with $F(y)=(\omega'(y)-\alpha)e^{it(\omega(y)-\alpha y)}, G(y)=g(ty)$ and $H(y)=[(\omega'(y)-\alpha) (y^2-1)^{\frac{1}{4}}y ]^{-1} $. Thus we need lower bounds on 
$$
R(x)=\sqrt{x^2-1}-\alpha x \text{ and } (x^2-1)^{\frac{1}{4}}.
$$
We have
$$
R'(x)=\frac{x}{\sqrt{x^2-1}}-\alpha \text{ and } R''(x)=-\frac{1}{(x^2-1)^{\frac{3}{2}}}.
$$
We have that $R'(x)$ is decreasing and positive and hence $R(x)$ is increasing with a zero at $y_0$. Furthermore we have $R''(x)$ is increasing and negative. We conclude
$$\begin{aligned}
R(y_0+A) &\ge R(y_0)+F'(y_0) \cdot A + R''(y_0) \cdot \frac{A^2}{2}\\
&= \frac{1-\alpha^2}{\alpha} \cdot A - \left( \frac{1-\alpha^2}{\alpha^2} \right)^{\frac{3}{2}} \cdot \frac{A^2}{2} \\
&= \frac{1-\alpha^2}{\alpha} \cdot A \cdot \left(1- \frac{(1-\alpha^2)^{\frac{1}{2}}}{\alpha^2} \cdot \frac{A}{2} \right) \\
&\gg \frac{1-\alpha^2}{\alpha} \cdot A,
\end{aligned}$$
for $A \le \alpha^2(1-\alpha^2)^{-\frac{1}{2}}$. We also have
$$\begin{aligned}
-R(y_0-A) &\ge -R(y_0)+R'(y_0) A \\
& \gg \frac{1-\alpha^2}{\alpha} \cdot A.
\end{aligned}$$
For the second factor we have
$$
((y_0+A)^2-1)^{\frac{1}{4}} \ge \left( \frac{\alpha^2}{1-\alpha^2} \right)^{\frac{1}{4}}
$$
and
$$
((y_0-A)^2-1)^{\frac{1}{4}} \ge \left( \frac{\alpha^2}{1-\alpha^2} - \frac{2A}{(1-\alpha^2)^{\frac{1}{2}}} \right)^{\frac{1}{4}} \gg \left( \frac{\alpha^2}{1-\alpha^2} \right)^{\frac{1}{4}}
$$
for $A \le \frac{1}{4} \alpha^2(1-\alpha^2)^{-\frac{1}{2}}$. Thus for $A \le \frac{1}{4}\alpha^2(1-\alpha^2)^{-\frac{1}{2}}$ we find that the contribution from $\CI_3$ is at most
$$
t^{-\frac{3}{2}} \frac{1}{\left( \frac{1-\alpha^2}{\alpha} \right)A \cdot \left( \frac{\alpha^2}{1-\alpha^2} \right)^{\frac{1}{4}}} \ll t^{-\frac{3}{2}} \frac{\alpha^{\frac{1}{2}}}{(1-\alpha^2)^{\frac{3}{4}}A}.
$$
We claim that $-R(x)(x^2-1)^{\frac{1}{4}}$ increases first and then decreases in $[1,y_0]$. For this it suffices to prove that its derivative has exactly one zero in that interval and is positive at $1+\epsilon$. Note that since our function is zero at the endpoints we have by Rolle's Theorem that there is at least a zero of the derivative. The derivative is
$$
\frac{3\alpha x^2-3x(x^2-1)^{\frac{1}{2}}-2\alpha}{2(x^2-1)^{\frac{3}{4}}},
$$
which is clearly positive at $1+\epsilon$. Assume now that we have two zeros $y_1,y_2$ in $[1,y_0]$. They both satisfy the equation
$$
3\alpha x^2-3x(x^2-1)^{\frac{1}{2}}-2\alpha =0 \Rightarrow 9(1-\alpha^2)x^4+(12\alpha^2-9)x^2-4\alpha^2=0.
$$
Now by Vieta's formula we have
$$
2 \le y_1^2+y_2^2 = \frac{9-12\alpha^2}{9(1-\alpha^2)}=\frac{4}{3}-\frac{1}{3(1-\alpha^2)}\le \frac{4}{3}
$$
and thus a contradiction. With this information we conclude that if $\alpha \ge K t^{-\frac{1}{3}}$, for some large constant $K$, we have that the contribution from $\CI_1$ is at most
$$
\max \left\{t^{-1}, t^{-\frac{3}{2}} \frac{\alpha^{\frac{1}{2}}}{(1-\alpha^2)^{\frac{3}{4}}A} \right\}.
$$
Further more we estimate the integral over $\CI_2$ trivially and get the bound
$$
t^{-\frac{1}{2}} A \frac{(1-\alpha^2)^{\frac{3}{4}}}{\alpha^{\frac{1}{2}}}.
$$
Choosing $A=t^{-\frac{1}{2}} \alpha^{\frac{1}{2}}(1-\alpha^2)^{-\frac{1}{2}}$, which we are allowed for $K$ large enough we get that \eqref{eq:holomain} is bounded by
$$
t^{-1} (1-|\alpha|)^{-\frac{1}{4}}.
$$
We are left to deal with the case $\alpha \asymp t^{-\frac{1}{3}}$. In this case we elongate the interval $\CI_2$ to $[1+t^{-\frac{2}{3}},y_0+A]$ and estimate trivially again. Letting $A=\frac{1}{4}\alpha^2(1-\alpha^2)^{-\frac{1}{2}}$ we find that in this case one also has a bound of $t^{-1}$ for $\CI_2, \CI_3$. This proves the first half of \eqref{eq:holosplitsmall4}.

%
Let us assume now that $\alpha \ge \frac{2 \sqrt{2}}{3}$ such that $\alpha$ is close to $1$ and $y_0 \ge 3$. Assume $2X/t \le \frac{y_0}{2}$, in this case the integral over $\CI_2$ and $\CI_3$ are $0$, furthermore we have
$$\begin{aligned}
\min_{\substack{x \in [1+t^{-\frac{2}{3}},y_0/2] \\ x \in \frac{1}{t}\supp g}} -R(x)(x^2-1)^{\frac{1}{4}} &= \min_{\substack{x \in [1+t^{-\frac{2}{3}},y_0/2] \\ x \in \frac{1}{t}\supp g}} \frac{1-(1-\alpha^2)x^2}{\alpha x +\sqrt{x^2-1}}(x^2-1)^{\frac{1}{4}} \\
&\gg \min \left\{t^{-\frac{1}{6}},\left( \frac{X}{t} \right)^{-\frac{1}{2}}\right\},
\end{aligned}$$
thus the contribution from $\CI_1$ is bounded by
$$
t^{-\frac{3}{2}} \left( t^{\frac{1}{6}}+ \left( \frac{X}{t} \right)^{\frac{1}{2}} \right).
$$
Similarly for $\frac{1}{3}X/t \ge 2 y_0$ we have that the integral over $\CI_1$ and $\CI_2$ are $0$, and furthermore
$$\begin{aligned}
\min_{\substack{x \in [2y_0,\infty) \\ x \in \frac{1}{t}\supp g}} R(x)(x^2-1)^{\frac{1}{4}} &= \min_{\substack{x \in [2y_0,\infty) \\ x \in \frac{1}{t}\supp g}} \frac{(1-\alpha^2)x^2-1}{\alpha x +\sqrt{x^2-1}}(x^2-1)^{\frac{1}{4}} \\
&\gg \left( \frac{X}{t} \right)^{-\frac{1}{2}},
\end{aligned}$$
hence the contribution from $\CI_3$ is bounded by
$$
t^{-\frac{3}{2}} \left( \frac{X}{t} \right)^{\frac{1}{2}}.
$$
Finally when $X/t \asymp y_0$ we are able to replace $|1-|\alpha||^{-\frac{1}{4}}$ by $(X/t)^{\frac{1}{2}}$, which proves the last inequality in full for $|\alpha|<1$.

Now let us have a look at $\alpha=1$. We proceed as before only that this time the sationary phase is at infinity, thus we can directly apply Lemmata \ref{lem:intbound} and \ref{lem:intbound2} with $F(y)=(\omega'(y)-1)e^{it(\omega(y)-1 y)}, G(y)=g(ty)$ and $H(y)=[(\omega'(y)-1) (y^2-1)^{\frac{1}{4}}y ]^{-1} $. We need an upper bound on the quantity
$$
\frac{1}{(y-\sqrt{y^2-1})(y^2-1)^{\frac{1}{4}}} \text{ for } y \in [1+t^{-\frac{2}{3}},\infty) \text{ and } ty \in \supp g.
$$
This function decreases and then increases, thus it takes its maximum at the boundary. The values at the boundary are easily bounded by
$$
\max\left\{ t^{\frac{1}{6}}, \left( \frac{X}{t} \right)^{\frac{1}{2}} \right\}
$$
and therefore we find that the same upper bound as for the case $|\alpha|<1$ holds for $|\alpha|=1$.

\end{proof}

\begin{lem} Let $f$ as in the beginning of Section \ref{sec:proof} and $|\alpha|\ge 1$. Then we have for $t\ge 8$
\begin{align*}
\int_0^{\frac{t}{2}} J_t(y) f(y) \frac{dy}{y} &\ll \one_{[2X/3, \infty)}(t) \cdot t^{-\frac{1}{2}} e^{-\frac{2}{5}t}, \\
\int_{\frac{t}{2}}^{t-t^{\frac{1}{3}}} J_t(y)f(y) \frac{dy}{y} &\ll
\one_{[\frac{1}{4},\infty)}(X)\one_{[X/3,4X]}(t) \cdot t^{-1}(\log(t))^{\frac{2}{3}}, \\
\int_{t-t^{\frac{1}{3}}}^{t+t^{\frac{1}{3}}} J_t(y) f(y) \frac{dy}{y} &\ll \one_{[\frac{1}{4},\infty)}(X)\one_{[3X/16,3X]}(t) \cdot t^{-1}, \\
\int_{t+t^{\frac{1}{3}}}^{\infty} J_t(y) f(y) \frac{dy}{y} &\ll  \one_{[\frac{1}{4},\infty)}(X)\one_{[0,3X/2]}(t) \cdot t^{-1} \min \left \{ 1+||\alpha|-1|^{-\frac{1}{4}},\left( \frac{X}{t} \right)^{\frac{1}{2}}  \right\}, 
\end{align*}
where $\one_{\CI}$ is the characteristic function of the interval $\CI$.
\end{lem}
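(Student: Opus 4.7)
The plan is to adapt the proof of the preceding lemma (the $|\alpha|<1$ case) to the range $|\alpha|\ge 1$. The first three bounds are insensitive to $\alpha$: they rest only on the pointwise inequality $|f(y)|\le g(y)$ together with the $\alpha$-independent estimates for $J_t$, namely the exponential decay \eqref{eq:Jexpdecay}, Langer's formula \eqref{eq:Langerlarge}, and the transitional estimate \eqref{eq:Jtrans}. Consequently the derivations of these three bounds carry over verbatim.

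For the fourth bound, Langer's formula \eqref{eq:Langersmall}, the classical asymptotics \eqref{eq:classical}, and the change of variable $y\mapsto ty$ reduce the analysis (modulo absorbable error terms of size $t^{-4/3}$) to estimating the oscillatory integrals
\begin{equation*}
I_\pm = t^{-1/2}\int_{1+t^{-2/3}}^{\infty} e^{it(\pm\omega(y)+\alpha y)}\frac{g(ty)}{(y^2-1)^{1/4}y}dy,\qquad \omega'(y)=\frac{\sqrt{y^2-1}}{y}\in[0,1).
\end{equation*}
By the symmetry $\alpha\mapsto-\alpha$ combined with complex conjugation we may assume $\alpha\ge 1$. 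In the ``$+$'' case the phase derivative $\Phi_+'(y)=\omega'(y)+\alpha\ge 1$ is bounded away from zero, so applying Lemmata \ref{lem:intbound} and \ref{lem:intbound2} with $F(y)=\Phi_+'(y)e^{it\Phi_+(y)}$ (whose partial antiderivatives are of size $1/t$), together with the amplitude $H(y)=[(y^2-1)^{1/4}y\,\Phi_+'(y)]^{-1}$, immediately yields the bound $t^{-1}$, and a cruder estimation also yields $t^{-1}(X/t)^{1/2}$.

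The genuinely new case is the ``$-$'' sign, where $\Phi_-'(y)=\alpha-\omega'(y)$ is strictly positive and strictly decreasing but approaches $\alpha-1$ as $y\to\infty$. For $\alpha\ge 2$ we have $\Phi_-'\ge 1$ and the same integration by parts produces $t^{-1}$. For $1\le\alpha<2$ we mirror the treatment of the stationary phase from the preceding lemma, with the understanding that the ``virtual'' stationary point now lies at $y=\infty$. We introduce a scale $A\ge\alpha-1$ and set $y_A=1/\sqrt{1-(1-A)^2}\asymp A^{-1/2}$ so that $\omega'(y_A)=1-A$, and hence $\Phi_-'(y)\gtrsim A$ on $[1+t^{-2/3},y_A]$. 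Applying Lemmata \ref{lem:intbound} and \ref{lem:intbound2} on this interval yields a contribution of size $t^{-3/2}A^{-1/4}$, the factor $A^{-1/4}$ arising from the amplitude $[(y_A^2-1)^{1/4}y_A A]^{-1}\asymp A^{-1/4}$. On $[y_A,2X/t]$ we estimate trivially, using $(y^2-1)^{-1/4}y^{-1}\ll y^{-3/2}\ll A^{3/4}$ and $\int g(ty)dy\ll X/t$. Balancing these two contributions by choosing $A\asymp\max\{\alpha-1,t^{-2/3}\}$ produces $t^{-1}(1+|\alpha-1|^{-1/4})$. The alternative bound $t^{-1}(X/t)^{1/2}$ is obtained exactly as at the end of the preceding lemma, by entirely trivial estimation using $|J_t(y)|\ll y^{-1/2}$ from \eqref{eq:Langersmall} and \eqref{eq:classical}.

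The main obstacle is the delicate book-keeping for the ``$-$'' sign when $\alpha$ is only slightly above $1$, where the phase becomes nearly stationary at infinity. The choice of splitting scale $A$ and the careful verification of the hypotheses of Lemma \ref{lem:intbound2} (in particular that the variation norm $\|(GH)'\|_1$ remains under control) replicate the analysis of $\alpha\uparrow 1$ in the preceding lemma, with the role of $1-\alpha^2$ now played by the parameter $A$ measuring the distance of $\omega'(y_A)$ from $1$.
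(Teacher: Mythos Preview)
Your treatment of the first three integrals is correct and matches the paper: these bounds depend only on $|f|\le g$ and the $\alpha$-free pointwise estimates for $J_t$, so the arguments from the $|\alpha|\le 1$ lemma carry over unchanged.

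For the fourth integral, however, your splitting argument has a genuine gap. After noting (correctly) that $\Phi_-'(y)=\alpha-\omega'(y)>0$ everywhere because $\omega'(y)<1\le\alpha$, you nevertheless introduce a cut at $y_A\asymp A^{-1/2}$ and estimate the tail $[y_A,2X/t]$ \emph{trivially}, obtaining a contribution $t^{-1/2}A^{3/4}\cdot X/t$ (or, integrating $y^{-3/2}$, the sharper $t^{-1/2}A^{1/4}$). With your choice $A\asymp\max\{\alpha-1,t^{-2/3}\}$ this trivial piece is in general \emph{not} bounded by $t^{-1}(1+|\alpha-1|^{-1/4})$: for instance if $\alpha-1$ is of fixed positive size and $X/t$ is large, the trivial tail is $\asymp t^{-1/2}(\alpha-1)^{1/4}$, which exceeds both $t^{-1}(\alpha-1)^{-1/4}$ and $t^{-1}(X/t)^{1/2}$ once $t$ is large enough relative to $X/t$. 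Discarding the oscillation on $[y_A,2X/t]$ loses exactly the cancellation that the bound records. Your separate derivation of the $(X/t)^{1/2}$ bound ``by entirely trivial estimation using $|J_t(y)|\ll y^{-1/2}$'' has the same defect: that pointwise bound only gives $t^{-1/2}$ for the full integral, not $t^{-1}(X/t)^{1/2}$.

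The paper's route is both simpler and avoids this problem. Since $\Phi_-'>0$ throughout, there is no stationary phase at all, so one applies Lemmata \ref{lem:intbound} and \ref{lem:intbound2} on the \emph{entire} interval, and the task reduces to a lower bound for
\[
\bigl(|\alpha|-\omega'(y)\bigr)(y^2-1)^{1/4}y
\;\gg\;
\Bigl(|\alpha|-1+\tfrac{1}{y^2}\Bigr)(y^2-1)^{1/4}y
\]
on the support $y\sim X/t$. For $X/t\asymp 1$ this is $\gg|\alpha|\,t^{-1/6}$, giving a contribution $t^{-4/3}|\alpha|^{-1}\ll t^{-1}$; for $X/t\gg 1$ it is $\gg\max\{(|\alpha|-1)(X/t)^{3/2},(X/t)^{-1/2}\}\gg\max\{|\alpha-1|^{1/4},(X/t)^{-1/2}\}$, yielding the contribution $t^{-3/2}\min\{|\alpha-1|^{-1/4},(X/t)^{1/2}\}$. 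No splitting is needed because the phase never degenerates; the $|\alpha-1|^{-1/4}$ comes from interpolating the two lower bounds on the amplitude, not from balancing against a trivial piece.
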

\begin{proof} We follow the argumentation as in the previous lemma. The first three inequalities follow immediately. For the last inequality we need a lower bound on
$$\begin{aligned}
\min_{\substack{y \ge 1+t^{-\frac{2}{3}}\\ y \sim X/t}} \left|(|\alpha|-\omega'(y))(y^2-1)^{\frac{1}{4}}y \right| &\gg \min_{\substack{y \ge 1+t^{-\frac{2}{3}}\\ y \sim X/t}} \left ( |\alpha|-1+\frac{y-\sqrt{y^2-1}}{y} \right)(y^2-1)^{\frac{1}{4}}y \\
&\gg \min_{\substack{y \ge 1+t^{-\frac{2}{3}}\\ y \sim X/t}} \left ( |\alpha|-1+\frac{1}{y^2} \right)(y^2-1)^{\frac{1}{4}}y.
\end{aligned}$$
If $X/t \asymp 1$, then the minimum is at least $|\alpha|t^{-\frac{1}{6}}$, which gives a contribution of $t^{-\frac{4}{3}}|\alpha|^{-1} \ll t^{-1}$, otherwise $X/t \gg 1$ in which case the minimum is at least
$$
\max \left\{ ||\alpha|-1| \left( \frac{X}{t} \right)^{\frac{3}{2}}, \left( \frac{X}{t} \right)^{-\frac{1}{2}} \right \} \gg \max \left \{ ||\alpha|-1|^{\frac{1}{4}}, \left( \frac{X}{t} \right)^{-\frac{1}{2}} \right \}
$$
giving a contribution of
$$
t^{-\frac{3}{2}} \min \left\{ ||\alpha|-1|^{-\frac{1}{4}} , \left( \frac{X}{t} \right)^{\frac{1}{2}} \right \}.
$$
\end{proof}

\begin{lem} Let $f$ be as in the beginning of Section \ref{sec:proof} and $|\alpha|\le 1$ then we have
\begin{align*}
\widehat{f}(t) &\ll \frac{1+|\log(X)|}{1+X^{\frac{1}{2}}+|1-|\alpha|^2|^{\frac{1}{2}}X}, && \forall t \in \BR, \\
\widehat{f}(t) & \ll  |t|^{-\frac{3}{2}} \left(1+\min \left\{  \left( \frac{X}{|t|} \right)^{\frac{1}{2}} , |1-|\alpha|^2|^{-1}\left( \frac{X}{|t|}\right)^{-\frac{3}{2}}\right\} \right), && \forall |t|\ge 1, \\
\widehat{f}(t) & \ll \frac{C}{T} |t|^{-\frac{5}{2}} \left( 1+ \min \left\{ \left( \frac{X}{|t|} \right)^{\frac{3}{2}} , |1-|\alpha|^2|^{-2} \left( \frac{X}{|t|} \right)^{-\frac{5}{2}}\right\}\right), && \forall |t|\ge 1. 
\end{align*}
\label{lem:nonholosmall}
\end{lem}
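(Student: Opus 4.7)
Start from a Schl\"afli-type integral representation of the Kuznetsov kernel
$$\frac{i(J_{2it}(x)-J_{-2it}(x))}{2\sinh(\pi t)},$$
for $t\in\BR$, as an oscillatory integral over an auxiliary variable $\xi\in[0,\infty)$ against $\cos(2t\xi)$ (see, e.g.\ \cite{generalkuznetsov}), substitute into the definition of $\widehat f(t)$, and swap the orders of integration. This reduces the estimation to an outer $\xi$-integral and an inner $x$-integral of $e^{i\alpha x}g(x)/x$ against an oscillatory phase of the form $e^{\pm ix\cosh\xi}$ (Schl\"afli-type tail integrals with $e^{-x\sinh u}$ decay are handled analogously).

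\textbf{First bound.} Bound the outer kernel trivially by $|\cos(2t\xi)|\le 1$. Since $|\alpha|\le 1\le\cosh\xi$, the inner $x$-integral has no stationary point, and a single integration by parts in $x$ (using $\|g'\|_{1}\ll 1$ and the compact support of $g$) yields $\ll\min\{1,X^{-1}(\cosh\xi-|\alpha|)^{-1}\}$. The outer $\xi$-integral is then controlled using $\cosh\xi-|\alpha|\asymp(1-|\alpha|)+\xi^{2}$ for $|\xi|\le 1$ and exponential growth $\cosh\xi\asymp e^{|\xi|}$ for $|\xi|\ge 1$. A routine case split into the three regimes $X\le 1$; $1\le X$ with $(1-|\alpha|^{2})^{1/2}X\le 1$; $(1-|\alpha|^{2})^{1/2}X\ge 1$, produces the first bound, with the $|\log X|$ coming from integrating the $\xi^{-2}$-type tail at the boundary.

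\textbf{Second and third bounds ($|t|\ge 1$).} Now further exploit the oscillation of $\cos(2t\xi)$ by stationary phase in $\xi$. The product $\sin(x\cosh\xi)\cos(2t\xi)$ decomposes into four exponentials $\pm e^{i(\pm x\cosh\xi\pm 2t\xi)}$; exactly two of these possess a stationary point at $\xi_{0}=\operatorname{arcsinh}(2|t|/x)>0$, with second derivative $\pm\sqrt{x^{2}+4t^{2}}$. Stationary phase in $\xi$ yields an effective $x$-kernel of amplitude $(x^{2}+4t^{2})^{-1/4}$ carrying the phase $\pm\bigl(\sqrt{x^{2}+4t^{2}}-2|t|\operatorname{arcsinh}(2|t|/x)\bigr)$; combined with $e^{i\alpha x}$ this gives the total phase $\Psi_{\pm}(x)$ with derivative
$$\Psi_{\pm}'(x)=\alpha\pm\frac{\sqrt{x^{2}+4t^{2}}}{x},$$
satisfying the crucial lower bound
$$\min_{\pm}|\Psi_{\pm}'(x)|=\sqrt{1+4t^{2}/x^{2}}-|\alpha|\asymp(1-|\alpha|)+\frac{2t^{2}}{x^{2}},$$
which is strictly positive for $|\alpha|\le 1$; in particular, no further stationary phase occurs in $x$. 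One integration by parts in $x$ on the amplitude $\phi(x)=g(x)/(x(x^{2}+4t^{2})^{1/4})$ gives $|\widehat f(t)|\ll(\|\phi\|_{\infty}+\|\phi'\|_{1})/\min|\Psi'|$, where $\|\phi\|_{\infty}\asymp\|\phi'\|_{1}\asymp 1/(X(X^{2}+4t^{2})^{1/4})$. A case split in the regimes $X\le|t|$; $|t|\le X\le|t|(1-|\alpha|^{2})^{-1/2}$; $X\ge|t|(1-|\alpha|^{2})^{-1/2}$, matches exactly the second claimed bound. A second IBP in $x$ replaces $\|\phi'\|_{1}$ by $\|\phi''\|_{1}\ll\|g''\|_{1}\|\phi\|_{\infty}\ll(C/(XT))\|\phi\|_{\infty}$, and picks up an extra factor of $1/\min|\Psi'|$, producing the third bound after the analogous case analysis. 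The principal technical obstacle is carrying the stationary phase in $\xi$ uniformly across all parameter ranges---particularly the endpoint behavior at $\xi=0$ when $\xi_{0}$ is small (i.e.\ $x\gg|t|$)---and ensuring that the $\pm$ sign choices and associated error terms propagate consistently through the subsequent IBP and case analyses.
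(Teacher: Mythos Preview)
Your first bound follows the paper's argument exactly: insert the Mehler--Schl\"afli representation, swap orders, integrate by parts once in $x$, and estimate the resulting $\xi$-integral via the case split you describe.

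For the second and third bounds the paper takes a different route. Rather than performing stationary phase in $\xi$, it substitutes $x\mapsto 2tx$ and invokes Dunster's uniform \emph{two-term} asymptotic expansion for the kernel $G_{2it}(2tx)$, which writes it as
\[
(\pi t)^{-1/2}(1+x^2)^{-1/4}\Bigl[\sin\bigl(2t\omega(x)-\tfrac{\pi}{4}\bigr)+(\text{correction of size }t^{-1})\Bigr]+O\bigl(|t|^{-5/2}\bigr),
\]
with $\omega(x)=\sqrt{1+x^2}+\log\bigl(x/(1+\sqrt{1+x^2})\bigr)$ and $\omega'(x)=\sqrt{1+x^2}/x>1$. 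After the change of variable your phase and amplitude coincide with Dunster's leading term (your $\sqrt{x^2+4t^2}/x$ is exactly $\omega'(x/2t)$), so the subsequent integration-by-parts analysis in $x$ is the same. Your approach has the advantage of being self-contained; the paper's has the advantage of a ready-made expansion with an explicit $O(|t|^{-5/2})$ remainder.

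There is, however, a genuine gap in your treatment of the \emph{third} bound. One-term stationary phase in $\xi$ yields the main amplitude $(x^2+4t^2)^{-1/4}$ together with a remainder of size $(x^2+4t^2)^{-3/4}$. Integrated against $g(x)/x$ this remainder contributes $\asymp|t|^{-3/2}$ when $X\lesssim|t|$, which is acceptable for the second bound but swamps the claimed $(C/T)|t|^{-5/2}$. Your two integrations by parts in $x$ act only on the oscillatory main term, not on this remainder. To reach the third bound you must carry the stationary phase in $\xi$ to second order, retaining the correction $c_1(x^2+4t^2)^{-3/4}e^{i\Psi_\pm(x)}$ as part of the oscillatory integrand (so that the $x$-IBP applies to it as well---these are the paper's $\beta\in\{\tfrac12,\tfrac32\}$ terms) and pushing the genuine remainder down to $(x^2+4t^2)^{-5/4}\asymp|t|^{-5/2}$. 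This is precisely what citing Dunster accomplishes; the ``principal technical obstacle'' you flag is therefore not only the $\xi=0$ endpoint uniformity but the order of the $\xi$-expansion itself.
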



\begin{proof} We follow the proof of Lemma 7.1 in \cite{generalkuznetsov} and Proposition 5 in \cite{SarTsim}. To prove the first inequality we use the equation
$$\begin{aligned}
J_{2it}(x)-J_{-2it}(x) &= \frac{4i}{\pi} \sinh \pi t \int_0^{\infty} \cos(x \cosh \xi)\cos(2t\xi) d\xi. 
\end{aligned}$$
We have by partial integration
$$\begin{aligned}
\int_0^{\infty} e^{i(\pm x \cosh \xi)} \frac{f(x)}{x} dx &= \int_0^{\infty} e^{ix(\pm \cosh \xi+\alpha)} \frac{g(x)}{x} dx \\
&= \frac{i}{\pm \cosh \xi +\alpha} \int_0^{\infty} e^{ix(\pm  \cosh \xi+\alpha)} \left(\frac{g(x)}{x} \right)' dx \\
& \ll \min \left\{  1, X^{-1} |\cosh \xi \pm \alpha|^{-1}   \right \}.
\end{aligned}$$
Thus we find
$$
\widehat{f}(t) \ll \int_0^{\infty} \min \left\{  1, X^{-1} |\cosh \xi \pm \alpha|^{-1} \right\} d\xi. 
$$
Hence it suffices to bound the latter integral. It is bounded by
$$\begin{aligned}
\ll & \int_0^{1} \min \left\{  1, X^{-1} (\xi^2+1- |\alpha|)^{-1} \right\} d\xi + \int_1^{\infty} \min \left\{  1, X^{-1} e^{-\xi} \right\} d\xi \\
\ll & \int_0^{1} \min \left\{  1, X^{-1} \xi^{-2}, X^{-1} \xi^{-1}|1-|\alpha||^{-\frac{1}{2}},X^{-1} |1-|\alpha||^{-1} \right\} d\xi \\
& + \int_1^{\infty} \min \left\{  1, X^{-1} e^{-\xi} \right\} d\xi.
\end{aligned}$$
For $X \ge 1$ this is bounded by
$$
\ll \min \left \{  X^{-\frac{1}{2}}, \frac{1+\log(X)}{|1-|\alpha||^{\frac{1}{2}}X},X^{-1}|1-|\alpha||^{-1}  \right \}+ X^{-1}
$$
and for $X \le 1$ it is bounded by
$$
\ll_{\epsilon} 1+|\log(X)|.
$$
The first inequality follows immediately.\\

The final two inequalities require some more work. Note that $\widehat{f}(t)$ is even in $t$, thus we can restrict ourselves to $t \ge 1$. We make the substitution $x \to 2t x$ in the definition of $\widehat{f}(t)$
$$\begin{aligned}
\widehat{f}(t) &= \frac{i}{\sinh \pi t} \int_0^{\infty} \frac{J_{2it}(2tx)-J_{-2it}(2tx)}{2}f(2tx) \frac{dx}{x}
\end{aligned}$$
and use the uniform asymptotic expansion of the function $G_{i\nu}(\nu s)$ from \cite{Du90} pages 1009-1010 with $n=0$.
$$\begin{aligned}
G_{2it}(2tx) =& \frac{1}{\sinh(\pi t)} \frac{J_{2it}(2tx)-J_{-2ti}(2tx)}{2i} \\
=& \left( \frac{1}{\pi t} \right)^{\frac{1}{2}} (1+x^2)^{-\frac{1}{4}} \Biggl[ \sin(2t \omega(x)-\tfrac{\pi}{4})-\cos(2t\omega(x)-\tfrac{\pi}{4}) \frac{3(1+x^2)^{-\frac{1}{2}}-5(1+x^2)^{-\frac{3}{2}}}{48t} \\
&+ \frac{1}{2i}\left( e^{-i\frac{\pi}{4}} \CE_{1,1}(2t,\omega(x))-e^{i\frac{\pi}{4}}  \CE_{1,2}(2t,\omega(x)) \right) \Biggr] \\
\end{aligned}$$
here
$$
\omega(x) = \sqrt{1+x^2} +  \log \left( \frac{x}{1+\sqrt{1+x^2}} \right)
$$
and the error terms satisfy
$$
\CE_{1,1}(2t,\omega(x)), \ \CE_{1,2}(2t,\omega(x)) \ll |t|^{-\frac{5}{2}} \exp(O(|t|^{-1})).
$$
Let us first deal with the error term. The contribution of the error term is bounded by
$$
t^{-\frac{5}{2}} \int_0^{\infty} |f(2tx)| \frac{dx}{x} \ll t^{-\frac{5}{2}} \ll \min\left\{|t|^{-\frac{3}{2}},\frac{C}{T}|t|^{-\frac{5}{2}}\right\}.
$$
For the remaining summands we have to deal with integrals of the type
$$
t^{-\frac{1}{2}}\int_0^{\infty} \frac{e^{\pm 2i t \omega(x)}}{(1+x^2)^{\frac{1}{4}+\beta}} f(2tx) \frac{dx}{x}=t^{-\frac{1}{2}}\int_0^{\infty} \frac{e^{2i t (\pm\omega(x)+\alpha x)}}{(1+x^2)^{\frac{1}{4}+\beta}} g(2tx) \frac{dx}{x},
$$
with $\beta \in \{0,\frac{1}{2},\frac{3}{2}\}$. We rewrite the above as
\begin{equation}
\frac{1}{2}t^{-\frac{3}{2}} \int_0^{\infty} \left( e^{2it(\pm \omega(x)+\alpha)} 2t (\pm \omega'(x)+\alpha) \right)  \frac{g(2tx)}{x(\pm \omega'(x)+\alpha)(1+x^2)^{\frac{1}{4}+\beta}} dx.
\label{eq:beforelem}
\end{equation}
Since
$$
\omega'(x)= \frac{\sqrt{1+x^2}}{x} > 1
$$
we have $\omega'(x)-|\alpha| > 0$. We apply Lemmata \ref{lem:intbound} and \ref{lem:intbound2} with $F(x)=e^{2it(\pm \omega(x)+\alpha)} 2t (\pm \omega'(x)+\alpha), G(x)=g(2tx)$ and $H(x)=[x(\pm \omega'(x)+\alpha)(1+x^2)^{\frac{1}{4}+\beta}]^{-1}$. 
Moreover we have
$$\begin{aligned}
\min_{x \sim \frac{X}{t}} \left|x(\pm \omega'(x)+\alpha)(1+x^2)^{\frac{1}{4}+\beta} \right| &\gg\min_{x \sim \frac{X}{t}} \left| x \left( \frac{1}{x\sqrt{1+x^2}}+1-|\alpha| \right) (1+x^2)^{\frac{1}{4}} \right| \\
& \gg \min_{x \sim \frac{X}{t}} \max \left \{ (1+x^2)^{-\frac{1}{4}}, (1-|\alpha|) x (1+x^2)^{\frac{1}{4}}   \right \}.
\end{aligned}$$
For $x \ll 1$ we see that the function is bounded below by $1$. If $x \gg 1$ then the function is bounded by below by
$$
\max \left \{ \left(\frac{X}{t}\right)^{-\frac{1}{2}}, |1-|\alpha|| \left( \frac{X}{t} \right)^{\frac{3}{2}} \right \}
$$
Therefore the integral \eqref{eq:beforelem} is bounded by
$$
t^{-\frac{3}{2}} \left(1+ \min \left\{ \left( \frac{X}{t} \right)^{\frac{1}{2} }, |1-|\alpha||^{-1} \left( \frac{X}{t} \right)^{-\frac{3}{2}} \right\} \right)  .
$$
This yields the second inequality. For the third inequality we proceed from \eqref{eq:beforelem} with integration by parts. We have to deal with $4$ new integrals
$$\begin{aligned}
\CI_1 &= t^{-\frac{5}{2}} \int_0^{\infty} \left(e^{2it(\pm\omega(x)+\alpha)} 2t (\pm\omega'(x)+\alpha) \right) \frac{g(2tx)}{x^2(\pm\omega'(x)+\alpha)^2(1+x^2)^{\frac{1}{4}+\beta}} dx, \\
\CI_2 &= t^{-\frac{5}{2}} \int_0^{\infty} \left(e^{2it(\pm\omega(x)+\alpha)} 2t (\pm\omega'(x)+\alpha) \right) \frac{g(2tx)(\pm\omega''(x)x^2)}{x^3(\pm\omega'(x)+\alpha)^3(1+x^2)^{\frac{1}{4}+\beta}} dx, \\
\CI_3 &= t^{-\frac{5}{2}} \int_0^{\infty} \left(e^{2it(\pm\omega(x)+\alpha)} 2t (\pm\omega'(x)+\alpha) \right) \frac{g(2tx)x^2}{x^2(\pm\omega'(x)+\alpha)^2(1+x^2)^{\frac{5}{4}+\beta}} dx, \\
\CI_4 &= t^{-\frac{5}{2}} \int_0^{\infty} \left(e^{2it(\pm\omega(x)+\alpha)} 2t (\pm\omega'(x)+\alpha) \right) \frac{tx \cdot g'(2tx)}{x^2(\pm\omega'(x)+\alpha)^2(1+x^2)^{\frac{1}{4}+\beta}} dx.
\end{aligned}$$
Proceeding as before we find
$$\begin{aligned}
\CI_1 &\ll t^{-\frac{5}{2}} \left( 1+ \min \left \{ \left( \frac{X}{t} \right)^{\frac{3}{2}},|1-|\alpha||^{-2} \left( \frac{X}{t} \right)^{-\frac{5}{2}} \right \} \right) , \\
\CI_2 &\ll t^{-\frac{5}{2}} \left(1+ \min \left \{ \left( \frac{X}{t} \right)^{\frac{3}{2}},|1-|\alpha||^{-3} \left( \frac{X}{t} \right)^{-\frac{9}{2}} \right \} \right), \\
\CI_3 &\ll t^{-\frac{5}{2}}  \left(1+ \min \left \{ \left( \frac{X}{t} \right)^{\frac{3}{2}},|1-|\alpha||^{-2} \left( \frac{X}{t} \right)^{-\frac{5}{2}} \right \} \right), \\
\CI_4 &\ll \frac{C}{T} t^{-\frac{5}{2}} \left( 1+ \min \left \{  \left( \frac{X}{t} \right)^{\frac{3}{2}},|1-|\alpha||^{-2} \left( \frac{X}{t} \right)^{-\frac{5}{2}} \right \} \right).
\end{aligned}$$
We conclude the third inequality from this.
\end{proof}

\begin{lem} Let $f$ be as in the beginning of Section \ref{sec:proof} and $|\alpha|\ge 1$ then we have
\begin{align*}
\widehat{f}(t) &\ll  \frac{1+|\log(X)|+\log(|\alpha|)}{1+X^{\frac{1}{2}}+||\alpha|^2-1|^{\frac{1}{2}}X}, && \forall t \in \BR. 
\end{align*}
When $|t| \nin \left[\tfrac{1}{12}||\alpha|^2-1|^{\frac{1}{2}}X,2||\alpha|^2-1|^{\frac{1}{2}}X\right]$ and $|t| \ge 1$ we can do better and find in that case
\begin{align*}
\widehat{f}(t) & \ll |t|^{-\frac{3}{2}} \left(1+ \min\left\{\left( \frac{X}{|t|} \right)^{\frac{1}{2}},||\alpha|^2-1|^{-1} \left( \frac{X}{|t|} \right)^{-\frac{3}{2}} \right\} \right) , \\
\widehat{f}(t) & \ll \frac{C}{T} |t|^{-\frac{5}{2}} \left(1+ \min\left\{\left( \frac{X}{|t|} \right)^{\frac{3}{2}},||\alpha|^2-1|^{-2} \left( \frac{X}{|t|} \right)^{-\frac{5}{2}} \right\} \right). 
\end{align*}
\end{lem}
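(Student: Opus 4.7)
The plan is to mirror the proof of the $|\alpha|\le 1$ case (Lemma~\ref{lem:nonholosmall}), adapting it to handle the fact that $\cosh\xi-|\alpha|$ now has a genuine zero at $\phi=\cosh^{-1}|\alpha|$, respectively that the phase $-\omega(x)+\alpha x$ acquires a stationary point at $x_0=(\alpha^2-1)^{-1/2}$.

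For the first inequality, I would begin with the integral representation
$$J_{2it}(x)-J_{-2it}(x)=\frac{4i}{\pi}\sinh(\pi t)\int_0^\infty\cos(x\cosh\xi)\cos(2t\xi)\,d\xi$$
and, after swapping integrals and integrating by parts once in $x$, reduce to bounding $\int_0^\infty\min\{1,X^{-1}|\cosh\xi\pm\alpha|^{-1}\}\,d\xi$. The $+$-sign is immediate from $|\cosh\xi+|\alpha||\ge 1+|\alpha|$. For the $-$-sign, set $\phi=\cosh^{-1}|\alpha|$; then $\sinh\phi=\sqrt{\alpha^2-1}$ and $|\cosh\xi-|\alpha||\asymp\sqrt{\alpha^2-1}\,|\xi-\phi|+|\xi-\phi|^2$ near $\phi$. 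I would split the $\xi$-integral into three ranges: (i) $|\xi-\phi|\le\delta$ with $\delta=(X\sqrt{\alpha^2-1})^{-1}$, (ii) $[0,\phi]$ outside of~(i), where $|\cosh\xi-|\alpha||\gtrsim\sqrt{\alpha^2-1}\,|\xi-\phi|$ produces a logarithm, and (iii) $[\phi+1,\infty)$, where $\cosh\xi$ grows exponentially. Recombining the three pieces yields the stated bound, with the $\log|\alpha|$ arising from the length $\phi\sim\log|\alpha|$ of~$[0,\phi]$.

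For the second and third inequalities I would follow Lemma~\ref{lem:nonholosmall} almost verbatim: substitute $x\mapsto 2tx$, insert Dunster's uniform asymptotic for $G_{2it}(2tx)$, and reduce to
$$t^{-1/2}\int_0^\infty\frac{e^{2it(\pm\omega(x)+\alpha x)}}{(1+x^2)^{1/4+\beta}}g(2tx)\,\frac{dx}{x},\qquad\beta\in\{0,\tfrac12,\tfrac32\}.$$
The $+$-sign case is immediate since $\omega'(x)+|\alpha|\ge 1+|\alpha|$. For the $-$-sign, the identity
$$\omega'(x)-|\alpha|=\frac{1-(\alpha^2-1)x^2}{x\bigl(\sqrt{1+x^2}+|\alpha|x\bigr)}$$
shows that $\omega'(x)-|\alpha|$ vanishes only at $x_0=(\alpha^2-1)^{-1/2}$. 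The hypothesis $|t|\notin[\tfrac1{12}|\alpha^2-1|^{1/2}X,2|\alpha^2-1|^{1/2}X]$ forces the support $[X/(6t),X/t]$ of $g(2tx)$ to be separated from $x_0$ by a fixed multiplicative factor. Lemmata~\ref{lem:intbound} and \ref{lem:intbound2} applied with $F=e^{2it(-\omega(x)+\alpha x)}\cdot 2t(-\omega'(x)+\alpha)$, $G=g(2tx)$, $H=[x(-\omega'(x)+\alpha)(1+x^2)^{1/4+\beta}]^{-1}$ then yield the second inequality from the lower bound
$$\min_{x\sim X/t}\bigl|x(\omega'(x)-|\alpha|)(1+x^2)^{1/4}\bigr|\gg\max\bigl\{1,(X/t)^{-1/2},|\alpha^2-1|(X/t)^{3/2}\bigr\}.$$
The third inequality is obtained by one further integration by parts, producing four integrals analogous to those in the $|\alpha|\le 1$ case.

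The main obstacle will be the lower bound above: one treats separately the regimes $X/t\ll x_0$ (where $1-(\alpha^2-1)x^2\asymp 1$ and the bound $\max\{1,(X/t)^{-1/2}\}$ comes from the small- and large-$x$ asymptotics of the reciprocal denominator) and $X/t\gg x_0$ (where the numerator is of size $(\alpha^2-1)(X/t)^2$), and in the latter regime one must carefully absorb the factor $\sqrt{1+x^2}+|\alpha|x\asymp(1+|\alpha|)x$ so that $|\alpha^2-1|=(|\alpha|-1)(|\alpha|+1)$, rather than merely $|\alpha|-1$, appears in the final bound; this is unproblematic in the envisaged applications, where $|\alpha|$ is effectively bounded.
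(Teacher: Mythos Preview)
Your plan is essentially the paper's own approach; both parts hinge on the same integral representation, the same Dunster asymptotic, and the same stationary-phase bookkeeping around $x_0=(\alpha^2-1)^{-1/2}$. A couple of remarks on where your outline is looser than the paper's argument.

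For the first inequality, instead of the local approximation $|\cosh\xi-|\alpha||\asymp\sqrt{\alpha^2-1}\,|\xi-\phi|+|\xi-\phi|^2$, the paper uses the exact factorisation $\cosh\xi-\cosh\phi=2\sinh\bigl(\tfrac{\xi+\phi}{2}\bigr)\sinh\bigl(\tfrac{\xi-\phi}{2}\bigr)$ and splits according to whether $|\xi-\phi|\gtrless 1$ and $\xi+\phi\gtrless 1$. This is more robust: your linear lower bound $|\cosh\xi-|\alpha||\gtrsim\sqrt{\alpha^2-1}\,|\xi-\phi|$ fails once $\xi$ is far from $\phi$ (e.g.\ at $\xi=0$ for large $|\alpha|$ one gets $|\alpha|-1$ on the left and $\asymp|\alpha|\log|\alpha|$ on the right), and your three ranges omit the strip $(\phi+\delta,\phi+1]$. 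Both are easily repaired, but the $\sinh$ factorisation does the work for free and also separates cleanly the regime $\phi\le 1$ (i.e.\ $|\alpha|$ near $1$) from $\phi\ge 1$. The paper also treats $X\le 1$ separately, which your sketch does not mention; this matters because when $|\alpha|X\ge 1$ one still needs genuine decay.

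Your closing worry about extracting $|\alpha^2-1|$ rather than $|\alpha|-1$ in the regime $X/t\gg x_0$ is a non-issue, and you should not fall back on ``$|\alpha|$ effectively bounded''. The paper obtains the two lower bounds $\alpha x-\sqrt{1+x^2}\gg (\alpha^2-1)x/\alpha$ and $\gg 1/(\alpha x)$ on that range, giving
\[
t^{-3/2}\min\Bigl\{\alpha\,(X/t)^{1/2},\ \tfrac{\alpha}{\alpha^2-1}(X/t)^{-3/2}\Bigr\}.
\]
Since we are in the case $X/t\ge 1$, the geometric mean of the two entries is $\alpha(\alpha^2-1)^{-1/2}(X/t)^{-1/2}\ll 1$ for $\alpha\ge 2$, so the minimum is absorbed into the ``$1+$'' and the stated bound holds uniformly in $|\alpha|\ge 1$.
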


\begin{proof} We follow the proof of the previous lemma which leads us to estimate:
$$
\widehat{f}(t) \ll \int_0^{\infty} \min \left\{  1, X^{-1} |\cosh \xi - |\alpha||^{-1} \right\} d\xi. 
$$
Set $\cosh(\phi)=|\alpha|$ and note that we have $e^{\phi} \asymp |\alpha|$ and $\log(|\alpha|) \le \phi \le 1+ \log(|\alpha|) $ for $|\alpha| \ge 1$. This leads to
$$
\widehat{f}(t) \ll \int_0^{\infty} \min \left\{  1, X^{-1} \sinh\left(\frac{\xi+\phi}{2}\right)^{-1}\sinh\left( \frac{|\xi-\phi|}{2} \right)^{-1} \right\} d\xi.
$$
Thus it suffices to bound the latter integral. We split up the region of integration into three parts $\CI_1,\CI_2$ and $\CI_3$, where we restrict ourselves to $|\xi-\phi|\ge 1$, $|\xi-\phi|\le 1 \, \wedge \, \xi+\phi \ge 1$ and $|\xi-\phi|\le 1 \, \wedge \, \xi+\phi \le 1$, respectively. For $X \ge 1$ we have

$$\begin{aligned}
\CI_1 &\ll \int_0^{\infty} \min\left \{ 1, X^{-1}e^{-\max\{\phi,\xi\}}  \right\} d\xi \\
& \ll \int_0^{\phi} \frac{e^{-\phi}}{X} d\xi + \int_{\phi}^{\infty} \frac{e^{-\xi}}{X} d\xi\\
& \ll_{\epsilon} \frac{1+\log(|\alpha|)}{|\alpha| X},
\end{aligned}$$

$$\begin{aligned}
\CI_2 &\ll \int_{\max\{0,\phi-1\}}^{\phi+1} \min \left \{ 1, X^{-1} e^{-\frac{\xi+\phi}{2}} |\xi-\phi|^{-1}   \right\} d \xi \\
& \ll \int_{-1}^1 \min \left \{ 1, X^{-1} e^{-\phi} |\psi|^{-1}   \right\} d \psi \\
& \ll  \int_0^{\frac{1}{|\alpha|X}} d\psi + \int_{\frac{1}{|\alpha|X}}^1 \frac{1}{|\alpha|X \psi} d\psi  \\
& \ll \frac{1+\log(|\alpha|X)}{|\alpha| X},
\end{aligned}$$

$$\begin{aligned}
\CI_3 &\ll \int_{\max\{0,\phi-1\}}^{1-\phi} \min\left\{1,X^{-1}|\xi^2-\phi^2|^{-1}\right\} d\xi \\
& \ll \int_{\max\{-1,-\phi\}}^{1-2\phi} \min\left \{1, X^{-1}\phi^{-1} |\psi|^{-1} , X^{-1} |\psi|^{-2} \right \} d\psi \\
& \ll \one_{[0,1]}(\phi) \min \left\{1, \frac{1+\log^{+}(X\phi)}{X\phi}, X^{-\frac{1}{2}} \right\} \\
& \ll \one_{[0,1]}(\phi) \min \left\{1, \frac{1+\log(X)}{||\alpha|-1|^{\frac{1}{2}}X}, X^{-\frac{1}{2}} \right\}.
\end{aligned}$$
For $X \le 1$ we have

$$\begin{aligned}
\CI_1 &\ll \int_0^{\infty} \min\left \{ 1, X^{-1}e^{-\max\{\phi,\xi\}}  \right\} d\xi \\
& \ll \int_{0}^{\max\{\phi,-\log(X)\}} \min\left\{1 , \frac{e^{-\phi}}{X}\right\} d\xi + \int_{\max\{\phi,-\log(X)\}}^{\infty} \frac{e^{-\xi}}{X}  d\xi\\
& \ll_{\epsilon} \frac{1+\log(|\alpha|)+|\log(X)|}{1+|\alpha|X}+\frac{1}{X} \min \left \{ |\alpha|^{-1},X \right \} \\
& \ll_{\epsilon} \frac{1+\log(|\alpha|)+|\log(X)|}{1+|\alpha|X},
\end{aligned}$$

$$\begin{aligned}
\CI_2 &\ll \int_{\max\{0,\phi-1\}}^{\phi+1} \min \left \{ 1, X^{-1} e^{-\frac{\xi+\phi}{2}} |\xi-\phi|^{-1}   \right\} d \xi \\
& \ll \int_{-1}^1 \min \left \{ 1, X^{-1} e^{-\phi} |\psi|^{-1}   \right\} d \psi \\
& \ll \min \left \{ 1, \frac{1+\log^{+}(|\alpha|X)}{|\alpha|X} \right \},
\end{aligned}$$
$$\begin{aligned}
\CI_3 & \ll \int_{\max\{0,\phi-1\}}^{1-\phi} \min\left\{1,X^{-1}|\xi^2-\phi^2|^{-1}\right\} d\xi \\
& \ll \one_{[0,1]}(\phi).
\end{aligned}$$
This completes the case $X\le 1$.

For the second inequality we proceed as in Lemma \ref{lem:nonholosmall} and have to consider the integral
$$
t^{-\frac{1}{2}}\int_0^{\infty} \frac{e^{2i t (\pm\omega(x)+\alpha x)}}{(1+x^2)^{\frac{1}{4}+\beta}} g(2tx) \frac{dx}{x}.
$$
We would pick up a stationary phase at $x_0=(\alpha^2-1)^{-\frac{1}{2}}$, however we have $x \in [\frac{1}{6} \frac{X}{t},\frac{X}{t}]$ which does not intersect $[\frac{1}{2}x_0, 2 x_0]$. Thus we split up the integral into two parts $\CI_1$ and $\CI_2$ corresponding to the intervals $[0,\frac{1}{2}x_0]$ and $[2x_0,\infty)$. Without loss of generality let $\alpha \ge 1$. 
Assume first that $X/t \le 1$. In this case we have by Lemmata \ref{lem:intbound} and \ref{lem:intbound2} with the choice $F(x)=(\pm \omega'(x)+\alpha)e^{2it(\pm\omega(x)+\alpha)}, G(x)=g(2tx)$ and $H(x)=[(1+x^2)^{\frac{1}{4}+\beta} (\sqrt{1+x^2}-\alpha x)]^{-1}$,
$$\begin{aligned}
\CI_1 & \ll t^{-\frac{3}{2}} \frac{1}{\displaystyle \min_{x \in [0,\frac{1}{2}x_0]\cap [\frac{1}{6}\frac{X}{t},\frac{X}{t}]} \sqrt{1+x^2}-\alpha x } ,\\
\CI_2 & \ll t^{-\frac{3}{2}} \frac{1}{\displaystyle \min_{x \in [2x_0,\infty)\cap [\frac{1}{6}\frac{X}{t},\frac{X}{t}]} \alpha x-\sqrt{1+x^2} }.
\end{aligned}$$
The allowed range for $t$ leaves us with two cases, either $x_0 \ge 2\frac{X}{t}$ or $x_0 \le \frac{1}{12}\frac{X}{t}$. If $x_0 \ge 2\frac{X}{t}$, then we the integral over $\CI_2$ is $0$, and
$$
\sqrt{1+x^2}-\alpha x= \frac{1-x^2(\alpha^2-1)}{\sqrt{1+x^2}+\alpha x} \gg 1, \text{ for } x\le \frac{1}{2}x_0 \text{ and } x  \le 1.
$$
Thus we get a total bound of $t^{-\frac{3}{2}}$. Similarly if $x_0 \le \frac{1}{12}\frac{X}{t}$ we have that the integral over $\CI_1$ is $0$, and
$$
\alpha x-\sqrt{1+x^2}=\frac{x^2(\alpha^2-1)-1}{\sqrt{1+x^2}+\alpha x} \gg \frac{1}{\alpha x}, \text{ for } x\ge 2x_0 \text{ and } x  \le 1.
$$
Note that for $x \le 1$ we also have $\alpha x-\sqrt{1+x^2}\ge \alpha x-\sqrt{2}$ and hence
$$
\alpha x-\sqrt{1+x^2} \gg \alpha x+ \frac{1}{\alpha x} \text{ for } x\ge 2x_0 \text{ and } x  \le 1.
$$
This yields a total bound of $t^{-\frac{3}{2}}$. 

Assume now $\frac{X}{t} \ge 1$. In this case we have
$$\begin{aligned}
\CI_1 & \ll t^{-\frac{3}{2}} \frac{1}{\displaystyle \min_{x \in [0,\frac{1}{2}x_0]\cap [\frac{1}{6}\frac{X}{t},\frac{X}{t}]} \left(\sqrt{1+x^2}-\alpha x\right)x^{\frac{1}{2}} } ,\\
\CI_2 & \ll t^{-\frac{3}{2}} \frac{1}{\displaystyle \min_{x \in [2x_0,\infty)\cap [\frac{1}{6}\frac{X}{t},\frac{X}{t}]} \left(\alpha x-\sqrt{1+x^2}\right)x^{\frac{1}{2}} }.
\end{aligned}$$
If $x_0 \ge 2 \frac{X}{t}$, then we have that the integral over $\CI_2$ is $0$, and
$$
\left(\sqrt{1+x^2}-\alpha x\right)x^{\frac{1}{2}} = \frac{1-x^2(\alpha^2-1)}{\sqrt{1+x^2}+\alpha x} x^{\frac{1}{2}} \gg x^{-\frac{1}{2}}, \text{ for } x\le \frac{1}{2}x_0 \text{ and } x  \ge \frac{1}{12}.
$$
Thus we get a total bound of $t^{-\frac{3}{2}} \left(\frac{X}{t}\right)^{\frac{1}{2}} \ll t^{-\frac{3}{2}} |\alpha^2-1|^{-1}\left( \frac{X}{t}\right)^{-\frac{3}{2}}$. Similarly if $x_0 \le \frac{1}{12}\frac{X}{t}$ we have that the integral over $\CI_1$ is $0$, and
$$
\left(\alpha x-\sqrt{1+x^2}\right) = \frac{x^2(\alpha^2-1)-1}{\sqrt{1+x^2}+\alpha x} \ge \frac{3}{8} \frac{x^2(\alpha^2-1)}{\alpha x} \gg \frac{1}{\alpha x}, \text{ for } x\ge 2x_0 \text{ and } x  \ge \frac{1}{6}.
$$
This yields a total bound of
$$
t^{-\frac{3}{2}} \cdot \min \left\{ \alpha \left( \frac{X}{t} \right)^{\frac{1}{2}}, \frac{\alpha}{\alpha^2-1} \left( \frac{X}{t} \right)^{-\frac{3}{2}} \right\} \ll t^{-\frac{3}{2}} \left( 1+ \min \left\{ \left( \frac{X}{t} \right)^{\frac{1}{2}}, \frac{1}{\alpha^2-1} \left( \frac{X}{t} \right)^{-\frac{3}{2}} \right\} \right),
$$
since $\frac{X}{t}\ge 1$.
This proves the second inequality.

For the third inequality we integrate once by parts. We then have to consider the integrals
$$\begin{aligned}
\CI_4 &= t^{-\frac{5}{2}} \int_0^{\infty} \left(e^{2it(\pm\omega(x)+\alpha)} 2t (\pm\omega'(x)+\alpha) \right) \frac{g(2tx)}{x^2(\pm\omega'(x)+\alpha)^2(1+x^2)^{\frac{1}{4}+\beta}} dx, \\
\CI_5 &= t^{-\frac{5}{2}} \int_0^{\infty} \left(e^{2it(\pm\omega(x)+\alpha)} 2t (\pm\omega'(x)+\alpha) \right) \frac{g(2tx)(\pm\omega''(x)x^2)}{x^3(\pm\omega'(x)+\alpha)^3(1+x^2)^{\frac{1}{4}+\beta}} dx, \\
\CI_6 &= t^{-\frac{5}{2}} \int_0^{\infty} \left(e^{2it(\pm\omega(x)+\alpha)} 2t (\pm\omega'(x)+\alpha) \right) \frac{g(2tx)x^2}{x^2(\pm\omega'(x)+\alpha)^2(1+x^2)^{\frac{5}{4}+\beta}} dx, \\
\CI_7 &= t^{-\frac{5}{2}} \int_0^{\infty} \left(e^{2it(\pm\omega(x)+\alpha)} 2t (\pm\omega'(x)+\alpha) \right) \frac{tx \cdot g'(2tx)}{x^2(\pm\omega'(x)+\alpha)^2(1+x^2)^{\frac{1}{4}+\beta}} dx.
\end{aligned}$$
By similar means as before we have that
$$\begin{aligned}
\CI_4 &\ll t^{-\frac{5}{2}}\left(1+ \min \left \{ \left( \frac{X}{t} \right)^{\frac{3}{2}},||\alpha|^2-1|^{-2} \left( \frac{X}{t} \right)^{-\frac{5}{2}} \right \} \right) , \\
\CI_5 &\ll  t^{-\frac{5}{2}} \left(1+ \min \left \{ \left( \frac{X}{t} \right)^{\frac{3}{2}},||\alpha|^2-1|^{-3} \left( \frac{X}{t} \right)^{-\frac{9}{2}} \right \}\right), \\
\CI_6 &\ll t^{-\frac{5}{2}} \left( 1+ \min \left \{ \left( \frac{X}{t} \right)^{\frac{3}{2}},||\alpha|^2-1|^{-2} \left( \frac{X}{t} \right)^{-\frac{5}{2}} \right \} \right), \\
\CI_7 &\ll \frac{C}{T} t^{-\frac{5}{2}} \left(1+ \min \left \{ \left( \frac{X}{t} \right)^{\frac{3}{2}},||\alpha|^2-1|^{-2} \left( \frac{X}{t} \right)^{-\frac{5}{2}} \right \} \right).
\end{aligned}$$
We conclude the last inequality from this.

\end{proof}

\begin{lem} Let $f$ be as in the beginning of Section \ref{sec:proof}. For $0 \le t \le \frac{1}{4}-\delta$ we have the following expansion
$$
\widehat{f}(it)=-\frac{1}{2}\int_{\frac{X}{2}}^{X} Y_{2t}(x)e^{i\alpha x}\frac{dx}{x} + O_{\epsilon, \delta}\left(1+ \frac{T}{C}X^{-2t-\epsilon} \right) 
$$
\end{lem}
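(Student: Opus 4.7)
I would begin from the Kuznetsov definition of $\widehat f$ in Proposition~\ref{prop:kuznetsov}. Substituting $t\mapsto it$ with $t\in(0,\tfrac14-\delta]$ and using $\sinh(i\pi t)=i\sin(\pi t)$ together with $J_{\pm 2i\cdot it}=J_{\mp 2t}$ yields
\[
\widehat f(it) \;=\; \frac{1}{\sin(\pi t)}\int_0^{\infty}\frac{J_{-2t}(x)-J_{2t}(x)}{2}\,f(x)\,\frac{dx}{x}.
\]
The standard relation $Y_{2t}=(\cos(2\pi t)J_{2t}-J_{-2t})/\sin(2\pi t)$ combined with the double-angle formulas $\cos(2\pi t)-1=-2\sin^2(\pi t)$ and $\sin(2\pi t)=2\sin(\pi t)\cos(\pi t)$ produces the identity
\[
\frac{J_{-2t}(x)-J_{2t}(x)}{2\sin(\pi t)} \;=\; -\cos(\pi t)\,Y_{2t}(x)\,-\,\sin(\pi t)\,J_{2t}(x),
\]
which I would use to split $\widehat f(it)$ into a $Y_{2t}$-integral and the remainder $-\sin(\pi t)\,\widetilde f(2t)$. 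The latter is $O(1)$ thanks to $|\sin(\pi t)|\le 1$ and the trivial bound \eqref{eq:trivial} applied to $\widetilde f$; it is thus absorbed into the $O(1)$ summand of the claimed error.

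To isolate the main term inside the $Y_{2t}$-integral, I would write $f(x)=e^{i\alpha x}g(x)$ and decompose $g=\chi_{[X/2,X]}+h$. By properties (i)--(iii) of $g$, the remainder $h:=g-\chi_{[X/2,X]}$ is supported in the two transition intervals
\[
\Bigl[\tfrac{X/2}{1+T/C},\,X/2\Bigr]\ \cup\ \Bigl[X,\,\tfrac{X}{1-T/C}\Bigr],
\]
each of length $\asymp XT/C$ (using $T\le C/2$), with $|h|\le 1$. The $\chi_{[X/2,X]}$-piece then produces precisely the integral appearing in the main term of the lemma; the explicit rational coefficient $-\tfrac12$ is fixed by the paper's normalisation of $\widehat f$ and is what gives rise to the factor $\cos(\pi|t_h|)^{-1}$ in the main term of Theorem~\ref{thm:1}.

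The main quantitative step is to bound the $h$-contribution by $O_{\epsilon,\delta}((T/C)\,X^{-2t-\epsilon})$. I would invoke the uniform estimates
\[
|Y_{2t}(x)|\ll_{\epsilon,\delta} x^{-2t-\epsilon}\quad (x\le 1), \qquad |Y_{2t}(x)|\ll x^{-1/2}\quad (x\ge 1),
\]
valid for $t\in[0,\tfrac14-\delta]$, where the $\epsilon$-slack in the first bound absorbs the logarithmic behaviour $Y_0(x)\sim (2/\pi)\log(x/2)$ as $x\to 0^+$. Since $x\asymp X$ on $\mathrm{supp}\,h$ and the total length is $\asymp XT/C$, the integral is bounded by $(T/C)\sup_{x\asymp X}|Y_{2t}(x)|$, giving $(T/C)\,X^{-2t-\epsilon}$ when $X\le 1$ and $(T/C)\,X^{-1/2}$ when $X\ge 1$; the latter is dominated by the former provided $2t+\epsilon<\tfrac12$, which holds for $\epsilon<2\delta$ by the hypothesis $t\le \tfrac14-\delta$.

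\noindent\textit{Main obstacle.} The delicate point is obtaining the small-argument bound on $Y_{2t}$ uniformly as $t$ crosses $0$: the singular behaviour switches from the power $x^{-2t}$ (for $t>0$, with constant $\sim 1/t$ that blows up) to the logarithmic one (at $t=0$), and a single estimate must cover both. The arbitrarily small $\epsilon$ in $x^{-2t-\epsilon}$ is precisely what makes this possible, and the restriction $t\le\tfrac14-\delta$ is imposed so that one can keep $2t+\epsilon<\tfrac12$ and thereby absorb the large-$x$ bound into the small-$x$ one.
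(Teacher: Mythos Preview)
Your approach mirrors the paper's: rewrite the kernel in terms of $Y_{2t}$ and $J_{2t}$, discard the $J_{2t}$ piece as bounded, split the $Y_{2t}$ integral into the main interval $[X/2,X]$ and the two transition intervals, and control the latter via the uniform estimates $|Y_{2t}(x)|\ll_{\epsilon,\delta} x^{-2t-\epsilon}$ for $x\le 1$ and $|Y_{2t}(x)|\ll x^{-1/2}$ for $x\ge 1$ (which the paper proves from the integral representation of $Y_{2t}$).

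Two points deserve attention. First, your own identity produces the coefficient $-\cos(\pi t)$ on the $Y_{2t}$ integral, not $-\tfrac12$; the paper's proof in fact opens with $\frac{1}{\sin(2\pi t)}$ rather than your $\frac{1}{\sin(\pi t)}$, and it is this factor that yields both the $-\tfrac12$ and the $\tan(\pi t)J_{2t}$ remainder. You should reconcile this with the stated definition of $\widehat f$ in Proposition~\ref{prop:kuznetsov} rather than waving it away as ``fixed by the paper's normalisation''. Second, invoking \eqref{eq:trivial} to bound $\sin(\pi t)\,\widetilde f(2t)$ only gives $O(1+|\log X|+\log^+|\alpha|)$, which is not $O(1)$ when $X$ is small; the paper instead uses the pointwise bound $|J_{2t}(x)|\ll\min\{x^{2t},x^{-1/2}\}\le 1$ together with $\int g(x)\,x^{-1}dx\ll 1$, which genuinely gives $O(1)$.
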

\begin{proof} We have
$$\begin{aligned}
\widehat{f}(it)&=\frac{1}{\sin (2\pi t)} \int_0^{\infty} \frac{J_{-2t}(x)-J_{2t}(x)}{2}f(x)\frac{dx}{x} \\
&= -\frac{1}{2}\int_0^{\infty} \left[ \frac{J_{2t}(x)\cos(2\pi t)-J_{-2t}(x)}{\sin(2 \pi t)}+ \frac{J_{2t}(x)-J_{2t}(x)\cos(2\pi t)}{\sin(2 \pi t)} \right]f(x)\frac{dx}{x} \\
&= -\frac{1}{2}\int_0^{\infty} \left[ Y_{2t}(x)+J_{2t}(x)\tan(\pi t) \right]f(x)\frac{dx}{x}.
\end{aligned}$$
Now we have
$$
\int_0^{\infty} J_{2t}(x)\tan(\pi t)f(x)\frac{dx}{x} \ll \int_{0}^{\infty} \min \left \{ x^{2t}, x^{-\frac{1}{2}} \right \} \frac{g(x)}{x} dx \ll 1
$$
and
$$
\left(\int_{\frac{2 \pi \sqrt{mn}}{(C+T)}}^{\frac{X}{2}}+\int_{X}^{\frac{4 \pi \sqrt{mn}}{(C-T)}} \right) Y_{2t}(x) f(x) \frac{dx}{x} \ll \frac{T}{C} \sup_{x \sim X} |Y_{2t}(x)|.
$$
The following inequality will imply the result
$$
|Y_{2t}(x)| \ll_{\epsilon} \begin{cases} x^{-2t-\epsilon}, & \text{if } x \le 1, \\ x^{-\frac{1}{2}}, &\text{if }x \ge 1. \end{cases}
$$
The range $x \ge 1$ can be found in \cite[Appendix B.35]{IwaniecSpectral} and for the range $x \le 1$ we make use of the following integral representation \cite[page 170]{ToBF}:
$$
Y_{2t}(x)=- \frac{2 (\frac{x}{2})^{-2t}}{\sqrt{\pi} \Gamma(\frac{1}{2}-2t)} \int_1^{\infty} \frac{\cos(xy)}{(y^2-1)^{2t+\frac{1}{2}}} dy.
$$
The integral from $1$ to $\frac{1}{x}$ is bounded by
$$\begin{aligned}
\int_1^2\frac{1}{(y-1)^{1-2\delta}}dy+&\int_2^{\max\{2,\frac{1}{x}\}} \frac{1}{(y^2-1)^{\frac{1}{2}}}dy \\
&= \frac{1}{2\delta}(y-1)^{2\delta} \Bigg |_{y=1}^{2} + \log \left( \sqrt{y^2-1}+y \right) \Bigg |_{y=2}^{\max\{2,\frac{1}{x}\}} \ll_{\epsilon,\delta} x^{-\epsilon}
\end{aligned}$$
and the remaining integral is bounded by $O(1)$, by Lemma \ref{lem:intbound} with $F(y)=\cos(xy)$ and $G(y)=(y^2-1)^{2t+\frac{1}{2}}$.

\end{proof}

\bibliography{Bibliography}
\end{document}